\theoremstyle{definition}
\newtheorem{dfn}{Definition}[section]
\theoremstyle{plain}
\newtheorem{thm}{Theorem}[section]
\newtheorem{lem}{Lemma}[section]
\newtheorem{prop}{Proposition}[section]
\newtheorem{cor}{Corollary}[section]
\theoremstyle{remark}
\newtheorem{rem}{Remark}[section]
\journal{Journal of Algebra}
\begin{document}

\begin{frontmatter}

\title{The Free Generalized Vertex Algebras and Generalized Principal Subspaces}
\author{Kazuya Kawasetsu}
\address{Department of Mathematical Sciences, University of Tokyo, Komaba, Tokyo, 153-8914, Japan.}

\ead{kawasetu@ms.u-tokyo.ac.jp}

\begin{abstract}
The notion of {\it free} generalized vertex algebras is introduced.
It is equivalent to the notion of {\it generalized principal subspaces} associated with lattices which are
not necessarily integral.
Combinatorial bases and the characters of the free generalized vertex algebras are given.
As an application, the commutants of principal subspaces are described by using generalized principal subspaces.
\end{abstract}

\begin{keyword}
\texttt{generalized vertex algebras, free vertex algebras,  principal subspaces.}
\MSC[2010]{17B69,17B67,11P81}
\end{keyword}

\end{frontmatter}

\section{Introduction}

The notion of vertex superalgebras is an analogy of the notion of commutative associative algebras.
They have countably many multiplications with the {\it locality} axioms.
A {\it free} vertex superalgebra $F=F(B,\mathcal{N})$ \cite{R} is the universal object for some vertex superalgebras.
It is described by the set of generators $B$ and system of locality bounds $\mathcal{N}:B\times B\rightarrow \mathbb{Z}$, which determine the form of the locality axiom on $B$.
If a vertex algebra $V$ is generated by $B$ and have the system of locality bounds $\mathcal{N}$ among the elements of $B$,
then there exists the unique projection $F(B,\mathcal{N})\rightarrow V$
which sends $B\subset F$ identically onto $B\subset V$.
Note that $F$ is determined not only by $B$ but also by $\mathcal{N}$.
It was first remarked in \cite{B}.
In \cite{R}, combinatorial bases and the graded dimensions of the free vertex superalgebras were given.

There is a notion which is equivalent to the notion of free vertex superalgebras
--- the {\it principal subspaces} of the lattice vertex superalgebras \cite{MP}.
Principal subspaces are first introduced by Feigin and Stoyanovsky \cite{SF}.
A (Feigin-Stoyanovsky) principal subspace is the subspace
\[
W(\Lambda)=U(\bar{\mathfrak{n}})\cdot v_\Lambda
\]
of a standard $A_n^{(1)}$-module $L(\Lambda)$, where $\mathfrak{n}$ is the nilradical of a Borel subalgebra of $sl_{n+1}$.
When $n=1$, the graded dimensions of $W(\Lambda_0)$ and $W(\Lambda_1)$ agree with the Rogers-Ramanujan functions.
The notion clearly extends to an arbitrary highest weight module for an affine Lie algebra.
The principal subspaces were studied in \cite{CLM2, CalLM3, G, AKS,CoLM,P1,FFJMM} and others.

Recently, Milas and Penn considered the lattice VOA $V_L$ and the vertex subalgebra
$W_L(B)=\langle e^{\beta_1},\ldots,e^{\beta_n} \rangle _{{\rm v.a.}}$, 
 called the principal subalgebra \cite{MP}.
This is a generalization of the principal subspaces of level one standard modules over simply-laced simple Lie algebras.
Combinatorial bases and the graded dimensions of the subalgebra $W_L(B)$ and some modules were given in \cite{MP}.

Let $F=F(B,\mathcal{N})$ be a free vertex superalgebra.
Then, $F$ is embedded in the lattice vertex superalgebra $V_Q$ associated with the lattice $Q=\langle B\rangle_{\mathbb{Z}}$ which is the free abelian group generated by $B$ \cite{R}.
Actually, the image is the principal subspace $W_Q(B)$ of $V_Q$ \cite{R}.

There is the generalization of the vertex superalgebras --- the {\it generalized vertex algebras} ({\it GVA}) \cite{DL,BK}.
One of the most important GVAs is a GVA associated with lattices which are not necessarily integral.

In this paper, we introduce the {\it free} GVAs and the {\it generalized principal subspaces}.
Our construction of free GVAs are similar to the construction of free vertex superalgebras in \cite{R}.
In order to generalize the construction of free vertex superalgebras, we use a generalization of {\it componentwise completion} of {\it graded algebras} \cite{MNT}.
The crucial point is that the formal power series $Y(a,z)$ ($a\in B$) is indeed a {\it field} (see Proposition  \ref{sec:propazfield}).

Then, we give combinatorial bases and the graded dimensions of free GVAs by generalizing the results of \cite{R,MP}.
Moreover, we show that the free GVA generated by a set $B$ is isomorphic to the generalized principal subspace associated with the free abelian group $Q_B$.

Finally, we show the duality of free GVAs (generalized principal subspaces).
This is related to the notion of the {\it dual lattices}.

In section 2, we generalize the notion of componentwise completions \cite{MNT}.
In section 3, we recall the generalized vertex algebras and 
give spanning sets of the weight graded GVAs.
In section 4, we construct the free generalized vertex algebras by using the componentwise completions in section 2.
In section 5, we introduce a notion of the generalized principal subspaces associated with lattices.
By using the vertex operators of lattice vertex algebras, we show that the spanning sets given in section 3 of the free GVAs are 
linearly-independent.
In section 6, we give a presentation of lattice GVAs in terms of generators and relations, by using the free GVAs.
In section 7, we compute the graded dimensions of the free GVAs and show the duality of free GVAs.
As an application, we describe the commutants of the Feigin-Stoyanovsky principal subspaces using the generalized principal subspaces (free GVAs).

\subsection*{Notations.}
We denote the non-negative integers by $\mathbb{Z}_+$ and the negative integers by $\mathbb{Z}_{<0}$.
All vector spaces are over a field $\mathbf{k}$ containing the complex numbers $\mathbb{C}$.
We endow the field $\mathbf{k}$ with the discrete topology.
For $z\in \mathbb{C}$, we denote the real part of $z$ by $\mathrm{Re}(z)$.

\section{Preliminaries on linear topologies}

We generalize the construction in \cite{MNT}.

\subsection{Componentwise topological algebras}

Let $Q$ be an abelian group equipped with a filtration
\[
\cdots \subset \mathrm{F}_{-1}Q \subset \mathrm{F}_0 Q\subset \mathrm{F}_1 Q \subset \mathrm{F}_2 Q\subset\cdots
\]
indexed by integers such that $\mathrm{F}_p Q +\mathrm{F}_q Q\subset \mathrm{F}_{p+q} Q$.
Suppose that the filtration is separated, that is, $\bigcap_{p\in\mathbb{Z}}\mathrm{F}_p Q=0$.
Let $A$ be an algebra and suppose given a grading
\[
A=\bigoplus_{\alpha\in Q} A(\alpha)
\]
indexed by the abelian group $Q$ such that $A(\alpha)\cdot A(\beta)\subset A(\alpha+\beta)$.
Define a filtration of $A$ by $\mathrm{F}_p A=\bigoplus_{\alpha\in \mathrm{F}_p Q} A(\alpha)$.
We simply call such an $A$ a {\it graded algebra}.

Since the filtration $\mathrm{F}$ is separated for  $\alpha\in Q$, we have the unique minimum 
$p\in \mathbb{Z}$ such that $\alpha\in \mathrm{F}_p Q$.
We denote such an integer by $p(\alpha)$.

Let $A=\bigoplus_{\alpha} A(\alpha)$ be a graded algebra and suppose given a linear topology on each $A(\alpha)$.
Let $n$ be a non-negative integer.
Consider the subspace
\[
A(\alpha) \cap (A\cdot \mathrm{F}_{-n-1}A)
\]
and let $\mathrm{I}_n(A(\alpha))$ be its closure in $A(\alpha)$.

\begin{dfn}
A {\it componentwise topological algebra} is a graded algebra $A$ endowed with 
a linear topology on each $A(\alpha)$ such that
\begin{enumerate}
\item the multiplication maps $A(\alpha)\times A(\beta)\rightarrow A(\alpha+\beta)$
are continuous;
\item for each $\alpha\in Q$, the sequence $\{\mathrm{I}_n(A(\alpha))\}$ forms a neighborhood basis of zero in $A(\alpha)$.
\end{enumerate}
A {\it componentwise complete algebra} is a componentwise topological algebra $A$
such that the topology on each $A(\alpha)$ is complete.
\end{dfn}

Let $A$ be a componentwise topological algebra.
Since the multiplication maps $A(\alpha)\times A(\beta)\rightarrow A(\alpha+\beta)$ are continuous,
we have
\[
A(\alpha)\cdot \mathrm{I}_n(A(\beta))\subset \mathrm{I}_n(A(\alpha+\beta))
\]
and
\[
\mathrm{I}_n(A(\alpha))\cdot A(\beta)\subset \mathrm{I}_{n-p(\beta)} (A(\alpha+\beta)).
\]
Therefore, for $a\in A(\alpha)$ and $b\in A(\beta)$, we have
\[
(a+\mathrm{I}_{n+p(\beta)}(A(\alpha)))\cdot (b+\mathrm{I}_n(A(\beta)))\subset a\cdot b + \mathrm{I}_n(A(\alpha+\beta)).
\]
We call the sum of the closures of the homogeneous subspaces of a graded subspace $U$ the
{\it componentwise closure} of $U$.

\subsection{Componentwise completions}
Let $A$ be a componentwise topological algebra.
Set
\[
\hat{A}=\bigoplus_{\alpha} \hat{A}(\alpha),
\]
where $\hat{A}(\alpha)$ is the completion 
$\varprojlim_n (A(\alpha)+\mathrm{I}_n(A(\alpha)))/\mathrm{I}_n(A(\alpha))$ of the space $A(\alpha)$.
We call $\hat{A}$ the {\it componentwise completion} of $A$.
Since the multiplication $A(\alpha)\times A(\beta)\rightarrow A(\alpha+\beta)$ are continuous,
they induce continuous bilinear maps $\hat{A}(\alpha)\times \hat{A}(\beta)\rightarrow \hat{A}(\alpha+\beta)$
which make $\hat{A}$ into an algebra endowed with a topology on each $\hat{A}(\alpha)$.
Explicitly, let $v$ and $w$ be elements of $\hat{A}(\alpha)$ and $\hat{A}(\beta)$.
Then, $v$ and $w$ have the forms  
$v=\prod_n (v_n+\mathrm{I}_n(A(\alpha)))$ and $w=\prod_n(w_n+\mathrm{I}_n(A(\beta)))$
with $v_n\in A(\alpha)$ and $w_n\in A(\beta)$ ($n\in \mathbb{Z}_+$).
The multiplication of $v$ and $w$ is given by
\[
v\cdot w= \prod_n (v_{n+p(\beta)}\cdot w_n +\mathrm{I}_n (A(\alpha+\beta))).
\]
Define the filtration $\hat{\mathrm{F}}$ of $\hat{A}$ by 
$\hat{\mathrm{F}}_p \hat{A}$ being the componentwise closure of $\mathrm{F}_p A$ for each $p\in \mathbb{Z}$.

\begin{prop}
The componentwise completion $\hat{A}$ is a componentwise complete algebra.
\end{prop}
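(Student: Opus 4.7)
The plan is to verify the three requirements for $\hat{A}$ to be componentwise complete: that the multiplication maps $\hat{A}(\alpha)\times \hat{A}(\beta)\to \hat{A}(\alpha+\beta)$ are continuous, that the subspaces $\mathrm{I}_n(\hat{A}(\alpha))$ form a neighborhood basis of zero in $\hat{A}(\alpha)$, and that each $\hat{A}(\alpha)$ is complete. By construction $\hat{A}(\alpha)=\varprojlim_n A(\alpha)/\mathrm{I}_n(A(\alpha))$ is complete in its inverse-limit topology, whose neighborhood basis of zero consists of the closures $\overline{\mathrm{I}_n(A(\alpha))}$ of the original ideals taken inside $\hat{A}(\alpha)$. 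Continuity of the induced multiplication then follows from the explicit formula given just before the statement, together with the inclusions $\hat{A}(\alpha)\cdot \overline{\mathrm{I}_n(A(\beta))}\subset \overline{\mathrm{I}_n(A(\alpha+\beta))}$ and $\overline{\mathrm{I}_n(A(\alpha))}\cdot \hat{A}(\beta)\subset \overline{\mathrm{I}_{n-p(\beta)}(A(\alpha+\beta))}$, obtained by pushing the corresponding inclusions for $A$ through the completion.

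The core step is therefore the identification $\mathrm{I}_n(\hat{A}(\alpha))=\overline{\mathrm{I}_n(A(\alpha))}$, which shows that the axiomatic neighborhood basis demanded by the definition of componentwise topological algebra coincides with the natural neighborhood basis of $\hat{A}(\alpha)$ coming from the inverse-limit construction. The inclusion $\supset$ is immediate, since $\mathrm{I}_n(A(\alpha))\subset A(\alpha)\cap(A\cdot \mathrm{F}_{-n-1}A)\subset \hat{A}(\alpha)\cap(\hat{A}\cdot \hat{\mathrm{F}}_{-n-1}\hat{A})$ and taking closures in $\hat{A}(\alpha)$ yields the containment. For the reverse inclusion I would take a typical generator $\hat{v}\cdot \hat{w}\in \hat{A}(\alpha-\gamma)\cdot \hat{A}(\gamma)$ with $\gamma\in \mathrm{F}_{-n-1}Q$, and approximate $\hat{v}$ and $\hat{w}$ by sequences $v_m\in A(\alpha-\gamma)$ and $w_m\in A(\gamma)$ from the dense subalgebra $A\subset \hat{A}$. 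Then each product $v_m\cdot w_m$ lies in $A(\alpha)\cap(A\cdot \mathrm{F}_{-n-1}A)\subset \mathrm{I}_n(A(\alpha))$, and by continuity of multiplication it converges to $\hat{v}\cdot \hat{w}$, so $\hat{v}\cdot \hat{w}\in \overline{\mathrm{I}_n(A(\alpha))}$. Since the right-hand side is closed in $\hat{A}(\alpha)$, closing up the span of such generators gives $\mathrm{I}_n(\hat{A}(\alpha))\subset \overline{\mathrm{I}_n(A(\alpha))}$.

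The main obstacle I anticipate is the bookkeeping required to keep the two kinds of indices straight — one on $Q$ controlling degrees, one on $\mathbb{Z}_+$ controlling neighborhoods — so that the shift by $p(\beta)$ in the second continuity inclusion is correctly absorbed when assembling coherent sequences in the inverse limit, and so that the approximation argument in the second step can be carried out uniformly over the finitely many summands comprising a typical element of $\hat{A}(\alpha)\cap(\hat{A}\cdot \hat{\mathrm{F}}_{-n-1}\hat{A})$. Once these compatibilities are checked, all three defining properties hold and $\hat{A}$ is componentwise complete.
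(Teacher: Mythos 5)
The paper states this proposition without proof, so there is no argument of the author's to compare against; your write-up supplies the missing verification, and it is essentially correct and follows the natural route: identify the topology on $\hat{A}(\alpha)$ coming from the inverse limit (with neighborhood basis the kernels $\overline{\mathrm{I}_n(A(\alpha))}$ of the projections $\hat{A}(\alpha)\rightarrow A(\alpha)/\mathrm{I}_n(A(\alpha))$) with the axiomatically required basis $\mathrm{I}_n(\hat{A}(\alpha))$, then read off completeness and continuity of multiplication from the explicit product formula and the shifted inclusions. The one slip is in your justification of the inclusion $\overline{\mathrm{I}_n(A(\alpha))}\subset \mathrm{I}_n(\hat{A}(\alpha))$: you write $\mathrm{I}_n(A(\alpha))\subset A(\alpha)\cap(A\cdot \mathrm{F}_{-n-1}A)$, but by definition $\mathrm{I}_n(A(\alpha))$ is the \emph{closure} of that intersection, so the containment goes the other way. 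The intended conclusion still holds --- from $A(\alpha)\cap(A\cdot\mathrm{F}_{-n-1}A)\subset \hat{A}(\alpha)\cap(\hat{A}\cdot\hat{\mathrm{F}}_{-n-1}\hat{A})$ one gets, by continuity of the canonical map $A(\alpha)\rightarrow\hat{A}(\alpha)$ applied to the closure and then taking closures in $\hat{A}(\alpha)$, that $\overline{\mathrm{I}_n(A(\alpha))}\subset\mathrm{I}_n(\hat{A}(\alpha))$ --- so this is a repairable misstatement rather than a gap. The reverse inclusion via density of $A(\alpha)$ in $\hat{A}(\alpha)$, graded decomposition of elements of $\hat{A}\cdot\hat{\mathrm{F}}_{-n-1}\hat{A}$ into finitely many homogeneous products, and continuity of multiplication is sound, and with it all three defining conditions follow as you say.
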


Let $A=\bigoplus_{\alpha} A(\alpha)$ be a graded algebra.
Recall the filtration $\mathrm{F}_p A=\bigoplus_{\alpha\in Q_p} A(\alpha)$.
We endow the space $A(\alpha)$ with the linear topology defined by $\mathrm{I}_n(\alpha)=A(\alpha)\cap(A\cdot \mathrm{F}_{-n-1}A)$.
Then $A$ becomes a componentwise topological algebra.

We call this componentwise topology on $A$ the {\it standard componentwise topology} and
the completion $\hat{A}$ the {\it standard componentwise completion}.

\section{The generalized vertex algebras}

\subsection{Fields, locality and generalized vertex algebras}
The generalized vertex algebras were first introduced in \cite{DL}.
Following  \cite{BK}, we define a notion of generalized vertex algebras.

Let $U$ be a vector space.
We identify the subsets of $\mathbb{C}/\mathbb{Z}$ and the $\mathbb{Z}$-invariant subsets of $\mathbb{C}$.
Let $\Gamma$ be a subset of $\mathbb{C}/\mathbb{Z}$.
We denote by $U[[z,z^{\Gamma}]]$ the space of all formal infinite series $\sum_{n\in \Gamma}f(n) z^n$ with $f(n)\in U$.
We denote by $U[[z]]z^\Gamma$ the space of all finite sums of the form $\sum_i \psi_i(z)z^{d_i}$ with $d_i\in\Gamma$ and
$\psi_i(z)\in U[[z]]$.

Let $Q$ be an abelian group with a symmetric bilinear map
 $\Delta :Q\times Q\rightarrow \mathbb{C}/\mathbb{Z}$.
Let $V$ be a vector space, and suppose given a $Q$-grading
$V=\bigoplus_{\alpha\in Q}V^\alpha$ on $V$.

For $N\in \mathbb{C}$, we define the formal expansions
\begin{eqnarray*}
\iota_{z_1,z_2} (z_1-z_2)^N &:=& e^{-z_2 \partial_{z_1}} z_1^N \\
&=& \sum_{j\in\mathbb{Z}_+} \begin{pmatrix}N\\ j\end{pmatrix} z_1^{N-j} (-z_2)^j
 \in (\mathbb{C}[[z_1]]z_1^{N+\mathbb{Z}})[[z_2]],
\end{eqnarray*}
and
\begin{eqnarray*}
\iota_{z_2,z_1} (z_1-z_2)^N &:=& e^{\pi i N}e^{-z_1 \partial_{z_2}} z_2^N \\
&=& e^{\pi i N}\sum_{j\in\mathbb{Z}_+} \begin{pmatrix}N\\ j\end{pmatrix} z_2^{N-j} (-z_1)^j
 \in (\mathbb{C}[[z_2]]z_2^{N+\mathbb{Z}})[[z_1]].
\end{eqnarray*}

\begin{dfn}(\cite{BK})
\begin{enumerate}
\item A {\it field} of {\it charge} $\alpha\in Q$ on $V$ (with respect to $Q$, $\Delta$ and a grading $\bigoplus_{\alpha} V^\alpha$ on $V$)
is a formal series $a(z)\in (\mathrm{End} (V)) [[z,z^{\mathbb{C}}]]$
with the property that
\[
a(z)b \in V^{\alpha+\beta}[[z]]z^{-\Delta(\alpha,\beta)} \quad \quad \mathrm{for} \quad b\in V_\beta.
\]
We  denote by $\mathcal{F}^\alpha (V,Q,\Delta)$ the vector space of all fields of charge $\alpha$.
We denote $\mathcal{F}(V,Q,\Delta)=\bigoplus_{\alpha\in Q}\mathcal{F}^\alpha (V,Q,\Delta)$.

\item Let $p$ be a non-zero complex number. Two fields $a(z)$ and $b(z)$ of charges $\alpha$ and $\beta$
are called {\it mutually $p$-local} if there exists $N\in \Delta(\alpha,\beta)$ such that
\begin{equation}\label{eqn:local1}
\iota_{z,w}(z-w)^N a(z)b(w)=p\cdot \iota_{w,z}(z-w)^N b(w)a(z).
\end{equation}
We call such an $N$ a {\it ($p$-)locality bound} of $a(z)$ and $b(z)$.
\end{enumerate}
\end{dfn}

The following lemma states that the notion of $p$-locality bound is independent of a choice of $p$.
Let $a(z)$ and $b(z)$ be fields of charge $\alpha$ and $\beta$.
Let $p$ be non-zero complex numbers and $N$ a $p$-locality bound.

\begin{lem}
One of the following is hold:

(i) for any $q\in\mathbb{C}^\times$, the number $N$ is a $q$-locality bound;

or

(ii) for any $q\in\mathbb{C}^\times$ with $p\neq q$, the number $N$ is not a $q$-locality bound.
\end{lem}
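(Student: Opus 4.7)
The plan is to argue by a simple subtraction: if $N$ were simultaneously a $p$-locality bound and a $q$-locality bound with $p\ne q$, then both of the defining equations
\[
\iota_{z,w}(z-w)^N a(z)b(w) = p\cdot \iota_{w,z}(z-w)^N b(w)a(z),
\]
\[
\iota_{z,w}(z-w)^N a(z)b(w) = q\cdot \iota_{w,z}(z-w)^N b(w)a(z)
\]
would hold, and subtracting them would yield $(p-q)\iota_{w,z}(z-w)^N b(w)a(z)=0$. Since $p-q\ne 0$, this forces $\iota_{w,z}(z-w)^N b(w)a(z)=0$, and then the first equation forces $\iota_{z,w}(z-w)^N a(z)b(w)=0$ as well.

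The key observation is that once both sides of the locality identity vanish, the identity trivially holds with any scalar factor on the right; hence $N$ is automatically an $r$-locality bound for every $r\in\mathbb{C}^\times$. So the situation of having two distinct locality scalars $p$ and $q$ for the same $N$ immediately lands us in case (i).

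Formally, I would prove the contrapositive: suppose case (i) fails, so there exists some $q_0\in\mathbb{C}^\times$ for which $N$ is not a $q_0$-locality bound. Then I claim $N$ cannot be a $q$-locality bound for any $q\ne p$. Indeed, if it were such a bound for some $q_1\ne p$, the subtraction argument above would produce $\iota_{z,w}(z-w)^N a(z)b(w)=\iota_{w,z}(z-w)^N b(w)a(z)=0$, and this vanishing would make $N$ an $r$-locality bound for \emph{every} $r\in\mathbb{C}^\times$, contradicting the choice of $q_0$.

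I expect no real obstacle: the only thing to be careful about is the formal well-definedness of the subtraction, but since both $\iota_{z,w}(z-w)^N a(z)b(w)$ and $\iota_{w,z}(z-w)^N b(w)a(z)$ live in $(\mathrm{End}(V))[[z^{\pm},w^{\pm}]]z^{N+\mathbb{Z}}w^{\mathbb{C}}$ (applied to homogeneous vectors in $V^\gamma$, the coefficients are genuine formal Laurent-type series in $z$ and $w$), subtracting two such identities is unambiguous and cancelling a non-zero scalar is legitimate.
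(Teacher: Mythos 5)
Your proof is correct and is essentially the paper's own argument: the paper simply notes that either $\iota_{z,w}(z-w)^N a(z)b(w)=0$, in which case both sides of the locality identity vanish and every scalar works (case (i)), or it is nonzero, in which case the scalar is uniquely determined (case (ii)). Your subtraction argument is the same dichotomy, just spelled out via the contrapositive.
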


\begin{proof}
If $\iota_{z,w}(z-w)^N a(z)b(w)=0$, then (i) holds.
Otherwise, (ii) holds.
\end{proof}

By the above lemma, we call a $p$-locality bound simply a locality bound.

\begin{dfn}
A {\it weak generalized vertex algebra} ({\it weak GVA}) consists of the following {\bf data}:
\begin{enumerate}
\item {\it (space of states)} a vector space $V$;
\item {\it (vacuum vector)} a non-zero vector $|0\rangle \in V$;
\item {\it (translation operator)} an endomorphism $T\in \mathrm{End} (V)$;
\item {\it (state-field correspondence)} a  linear map $Y:V\rightarrow \mathrm{End}(V)[[z,z^\mathbb{C}]]$,
$a\mapsto Y(a,z)=\sum_{n\in \mathbb{C}} a(n) z^{-n-1}$ ($a(n)\in \mathrm{End}(V)$, $n\in \mathbb{C}$) from $V$ to the space of formal series,
\end{enumerate}
subject to the following {\bf axioms}:
\begin{enumerate}
\item {\it (vacuum axiom)} $T|0\rangle =0$, and $Y(a,z)|0\rangle|_{z=0}=a$ ($a\in V$);
\item {\it (translation covariance)} $[T,Y(a,z)]=\partial_z Y(a,z)$ ($a\in V$).
\end{enumerate}
\end{dfn}

We denote the weak GVAs by $(V,Y,|0\rangle,T)$ or $V$.

Let $V_1,V_2$ be weak GVAs.

\begin{dfn}
A {\it homomorphism} of weak GVAs is a linear map $f:V_1\rightarrow V_2$
which satisfies
\begin{enumerate}
\item $f(Y_1(a,z)b)=Y_2(f(a),z)f(b)$ ($a,b\in V_1$);
\item $f(|0\rangle_1)=|0\rangle_2$;
\item $f\circ T_1=T_2\circ f$.
\end{enumerate}
\end{dfn}

\begin{dfn}
\begin{enumerate}
\item A {\it weak generalized vertex subalgebra} ({\it subweak GVA}) of a weak GVA $V$ is
a $T$-invariant subspace $W$ of $V$ such that $|0\rangle\in W$ and that  $Y(w_1,z)w_2\in W[[z,z^\mathbb{C}]]$ for $w_1,w_2\in W$
 (with the induced weak GVA structure).
\item A {\it weak GVA (left) ideal} of a weak GVA $V$ is a $T$-invariant subspace $I$ of $V$ such that 
$Y(v,z)u\in I[[z,z^\mathbb{C}]]$ for any $v\in V$, $u\in I$.
\end{enumerate}
\end{dfn}

Let $V,W$ be weak GVAs and $f:V\rightarrow W$ a homomorphism of weak GVAs.
Then, the image of $f$ is a subweak GVA of $W$, and the kernel of $f$ is a weak GVA ideal of $V$.
Let $I,J$ be weak GVA ideals of $V$.
Then, $I\cap J$ and $I\oplus J$ are also  weak GVA ideals of $V$.
Let $U,U'$ be subweak GVA of $V$.
Then, $U\cap U'$ is also a subweak GVA of $V$.
Let $B$ be a subset of $V$.
The weak GVA ideal {\it generated by $B$} is the unique minimal weak GVA ideal of $V$ containing $B$.
The subweak GVA {\it generated by $B$} is the unique minimal subweak GVA of $V$ containing $B$.

Now we define a notion of the generalized vertex algebras.

\begin{dfn}
A {\it generalized vertex algebra} ({\it GVA}) is a weak GVA $(V,Y,|0\rangle,T)$ such that
\begin{itemize}
\item {\it (field and locality axiom)} there exist 
\begin{enumerate}
\item an abelian group $Q$;
\item a symmetric bilinear map $\Delta:Q\times Q\rightarrow \mathbb{C}/\mathbb{Z}$;
\item a $Q$-grading $V=\bigoplus_{\alpha\in Q}V^\alpha$ on $V$;
\item a bimultiplicative function $\eta:Q\times Q\rightarrow \mathbb{C}^\times$,
\end{enumerate}
 such that
\begin{enumerate}
\item the vacuum vector $|0\rangle$ belongs to $V^0$;
\item the translation operator $T$ is $Q$-grading preserving;
\item the state-field correspondence $Y$ becomes a $Q$-grading preserving linear map into $\mathcal{F}(V,Q,\Delta)=\bigoplus_\alpha \mathcal{F}^\alpha (V,Q,\Delta)$;

(that is, for any $\alpha\in Q$ and $a\in V^\alpha$, the formal series $Y(a,z)$ is a field of charge $\alpha$ with respect to
 $Q$, $\Delta$ and the $Q$-grading on $V$);
\item the complex number $\eta(\alpha,\beta)\eta(\beta,\alpha)$ is equal to $e^{-2\pi i \Delta(\alpha,\beta)}$ ($\alpha,\beta\in Q$);
\item the fields $Y(a,z)$ and $Y(b,w)$ are mutually $\eta(\alpha,\beta)$-local ($\alpha,\beta\in Q$, $a\in V^\alpha$, $b\in V^\beta$).
\end{enumerate}
\end{itemize}
\end{dfn}

We denote the generalized vertex algebra by $(V,Y,|0\rangle,T)$ or $V$.
We call a quadruple $(Q,\Delta,(Q \mbox{-grading}\, V=\bigoplus_\alpha V^\alpha), \eta)$ in the field and locality axiom a {\it charge factor on $V$}.
We denote it by $(Q,\Delta,\eta)$ or $Q$.

\begin{rem}
Let $(V,Y,|0\rangle,T)$ be a GVA and $(Q,\Delta,\eta)$ a charge factor on $V$.
Then, for fixed $Q$ and $\Delta$, the quadruple $(V=\bigoplus_{\alpha\in Q}V^\alpha,Y,|0\rangle,T)$ is a generalized vertex algebra in \cite{BK}.
\end{rem}

Let $V$ be a GVA.
Fix a charge factor $(Q,\Delta,\eta)$ on $V$.

We call the grading $V=\bigoplus_{\alpha\in Q} V^\alpha$ a {\it charge-grading} and denote $\gamma(a)=\alpha$ for any non-zero $a\in V^\alpha$.
We call the bimultiplicative function $\eta:Q\times Q\rightarrow \mathbb{C}^\times$
a {\it locality factor}.

Let $B$ be a subset of $V$ consisting of $Q$-homogeneous elements and $\mathcal{N}:B\times B\rightarrow \mathbb{C}$ be a symmetric function.
We call $\mathcal{N}$ a {\it locality bound} on $B$ if for any $a,b\in B$, the number $\mathcal{N}(a,b)$ is a locality bound of $Y(a,z)$ and $Y(b,z)$.

Let $a$ and $b$ be homogeneous elements of $V$ with the charges $\alpha$ and $\beta$.
Note that the two fields $Y(a,z)=\sum a(n)z^{-n-1}$ and $Y(b,w)=\sum b(n)w^{-n-1}$ are $\eta(\alpha,\beta)$-local if and only if
there exists $N\in\Delta(\alpha,\beta)$ such that  for ant $s,t\in \mathbb{C}$,
\begin{eqnarray}\label{eqn:local2}
&&\sum_{j\in \mathbb{Z}_+} (-1)^j \begin{pmatrix} N\\ j\end{pmatrix} \biggl(
a(s+N-j)b(t+j) \\ \nonumber
&& \quad\quad\quad -\eta(\alpha,\beta) \cdot e^{\pi i N} b(t+N-j) a(s+j)\biggr) =0.
\end{eqnarray}
We denote the LHS of (\ref{eqn:local2}) by $l(a,b,s,t,N,\eta)$.

Let $(V_1,Y_1,|0\rangle_1,T_1)$ and $(V_2,Y_2,|0\rangle_2,T_2)$
be generalized vertex algebras.

\begin{dfn}
A {\it homomorphism} of generalized vertex algebras is a homomorphism of weak GVAs $f:V_1\rightarrow V_2$
such that
\begin{itemize}
\item ({\it charge compatibility}) there exist 
\begin{enumerate}
\item charge factors $(Q_1,\Delta_1,\eta_1)$ on $V_1$ and $(Q_2,\Delta_2,\eta_2)$ on $V_2$;
\item a group-homomorphism $\phi:Q_1\rightarrow Q_2$
\end{enumerate}
which satisfy the following conditions ($\alpha,\beta\in Q_1$):
\begin{enumerate}
\item $f((V_1)^\alpha)\subset (V_2)^{\phi(\alpha)}$;
\item $\Delta_2(\phi(\alpha),\phi(\beta))=\Delta_1(\alpha,\beta)$;
\item $\eta_2(\phi(\alpha),\phi(\beta))=\eta_1(\alpha,\beta)$.
\end{enumerate}
\end{itemize}
\end{dfn}

Note that when charge factors $(Q_1,\Delta_1,\eta_1)$ and $(Q_2,\Delta_2,\eta_2)$ and a group-homomorphism $\phi:Q_1\rightarrow Q_2$ satisfy the charge compatibility axiom, then 
 $Q_2,\Delta_2,\eta_2$ and the induced grading $V_1=\bigoplus_{\beta\in Q_2}(V_1)^\beta$, where $(V_1)^\beta=\bigoplus_{\alpha\in Q_1,\phi(\alpha)=\beta}(V_1)^\alpha$,
forms a charge factor on $V_1$.
The charge factors $(Q_2,\Delta_2,\eta_2)$ on $V_1$ and $V_2$ and the identity map $Q_2\rightarrow Q_2$ satisfy
again the charge compatibility axiom.

\begin{dfn}
\begin{enumerate}
\item A {\it generalized vertex subalgebra} ({\it subGVA}) of a generalized vertex algebra $V$ is
a subweak GVA $W$ of $V$ such that there exists a charge factor $Q$ on $V$ such that $W$ is a $Q$-graded subspace of $V$
 (with the induced GVA structure).
\item An {\it ideal} ({\it GVA-ideal}) of a generalized vertex algebra $V$ is a weak GVA ideal $I$ of $V$ such that 
 there exists a charge factor $Q$ on $V$ such that $I$ is a $Q$-graded subspace of $V$.
\end{enumerate}
\end{dfn}

Let $V$ be a GVA and $W$ a subGVA.
By the definition, the natural injection $\iota: W\hookrightarrow V$ is a homomorphism of GVAs.

Let $I$ be an ideal of $V$.
Consider the quotient space $V/I$ and
set $Y'(a+I,z)(b+I)=Y(a,z)b+I$ for $a,b\in I$.
Since the GVA-ideals are ``two-sided" ideals, it defines a well-defined map from $V/I\times V/I$ to $\mathrm{End}(V/I)[[z,z^\mathbb{C}]]$.
Since $I$ is $T$-invariant, the operator $T$ induces on $V/I$ the operator $T'$.

\begin{prop}
The induced weak GVA $(V/I,Y',|0\rangle+I,T')$ is a generalized vertex algebra.
Moreover, the projection $\pi:V\rightarrow V/I$ is a homomorphism of GVAs.
\end{prop}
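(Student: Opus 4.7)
The plan is to verify, in order, that (i) $Y'$ is well-defined; (ii) the vacuum axiom and translation covariance descend to $(V/I, Y', |0\rangle + I, T')$; (iii) $V/I$ carries a charge factor satisfying the field and locality axioms; and (iv) the projection $\pi$ is itself a GVA homomorphism. The author's parenthetical about ``two-sided'' ideals pinpoints the sole difficulty: the ideal axiom supplies only $Y(a,z)u \in I[[z,z^\mathbb{C}]]$ for $a \in V$ and $u \in I$, which suffices to handle ambiguity in the second slot of $Y'$, but ambiguity in the first slot requires separately that $Y(u,z)b \in I[[z,z^\mathbb{C}]]$ for $u \in I$ and $b \in V$.

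To establish this inclusion, fix a charge factor $(Q,\Delta,\eta)$ with respect to which $I$ is $Q$-graded, and take homogeneous $u \in I \cap V^\gamma$ and $b \in V^\beta$. Choose $N \in \Delta(\gamma,\beta)$ witnessing the locality of $Y(u,z)$ and $Y(b,w)$, so that
\[
\iota_{z,w}(z-w)^N Y(u,z)Y(b,w)|0\rangle = \eta(\gamma,\beta)\,\iota_{w,z}(z-w)^N Y(b,w)Y(u,z)|0\rangle .
\]
Since $I$ is $T$-invariant, $Y(u,z)|0\rangle = e^{zT}u \in I[[z]]$; then the left-ideal property forces every coefficient of the right-hand side (a series in $z$, $w$, and complex shifts thereof) to lie in $I$. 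On the other hand, the left-hand side plainly contains only non-negative integer powers of $w$, so by the equality the same is true on the right. Extracting the coefficient of $w^0$ on each side yields $z^N Y(u,z)b \in I[[z,z^\mathbb{C}]]$, and multiplying by $z^{-N}$ gives the desired $Y(u,z)b \in I[[z,z^\mathbb{C}]]$.

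The remaining steps are bookkeeping. The vacuum and translation-covariance axioms on $V/I$ descend immediately from those on $V$, using $T(I) \subset I$. Since $I$ is $Q$-graded, setting $(V/I)^\alpha := V^\alpha / (I \cap V^\alpha)$ makes $V/I$ into a $Q$-graded space, and $(Q,\Delta,\eta)$ with this grading constitutes a charge factor on $V/I$: the vacuum class lies in degree $0$, $T'$ preserves degree, and $Y'(a+I,z)(b+I)$ lies in $(V/I)^{\alpha+\beta}[[z]]z^{-\Delta(\alpha,\beta)}$ because the corresponding identity holds in $V$. The $\eta(\alpha,\beta)$-locality of $Y'(a+I,z)$ and $Y'(b+I,w)$ follows by applying $\pi$ to the defining identity (\ref{eqn:local1}) on $V$, which is linear.

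Finally, $\pi$ is a weak GVA homomorphism by construction---it intertwines state-field correspondences by the definition of $Y'$, sends vacuum to vacuum, and commutes with translation---and the charge compatibility axiom is satisfied with the same $(Q,\Delta,\eta)$ on both sides and $\phi = \mathrm{id}_Q$. The only non-routine point in the entire argument is the well-definedness step in the second paragraph.
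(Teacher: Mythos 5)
Your proof is correct and follows the same route as the paper: the quotient inherits the charge factor $(Q,\Delta,\eta)$ via the induced grading on $V/I$, and $\pi$ is a GVA homomorphism with $\phi=\mathrm{id}_Q$. The one place you go beyond the paper is the well-definedness of $Y'$ in its first slot: the paper merely asserts that GVA-ideals are ``two-sided,'' whereas you supply the standard skew-symmetry argument (locality applied to $Y(u,z)Y(b,w)|0\rangle$, the $T$-invariance of $I$ giving $Y(u,z)|0\rangle=e^{zT}u\in I[[z]]$, and extraction of the $w^0$ coefficient), and that argument is sound.
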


\begin{proof}
Let $(Q,\Delta,\eta)$ be a charge factor on $V$ such that $I$ is $Q$-invariant.
Then, we have the induced grading $V/I=\bigoplus_{\alpha\in Q} (V/I)^\alpha$.
The quadruple $(Q,\Delta, (\mbox{the grading on}\, V/I), \eta)$ satisfies the field and locality axiom on $V/I$.
Therefore, $V/I$ is a GVA.
The projection $\pi:V\rightarrow V/I$ is a homomorphism with the group-homomorphism $\mathrm{id}:Q\rightarrow Q$.
\end{proof}

Let $V_1,V_2$ be GVAs and $f:V_1\rightarrow V_2$ a homomorphism of GVAs.

\begin{prop}
The kernel of $f$ is an ideal of $V_1$. Moreover, the image of $f$ is a subGVA of $V_2$.
\end{prop}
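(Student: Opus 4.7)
The plan is to reduce to the already-established weak GVA case and then upgrade to the GVA setting by exhibiting a charge factor with respect to which the kernel (respectively, the image) is graded. Recall that the paper has already noted that the image of a weak GVA homomorphism is a subweak GVA and its kernel is a weak GVA ideal, so the remaining task is purely about gradings: I need to supply, for each of the two subspaces, a charge factor on the ambient GVA making the subspace homogeneous.

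The key observation I would exploit is the remark stated just before this proposition: given the data $(Q_1,\Delta_1,\eta_1)$, $(Q_2,\Delta_2,\eta_2)$ and $\phi:Q_1\to Q_2$ witnessing that $f$ is a homomorphism of GVAs, one obtains a \emph{second} charge factor on $V_1$, namely $(Q_2,\Delta_2,\eta_2)$ together with the induced $Q_2$-grading $(V_1)^\beta := \bigoplus_{\phi(\alpha)=\beta}(V_1)^\alpha$. With respect to \emph{this} charge factor on $V_1$, the charge compatibility condition upgrades to $f((V_1)^\beta)\subset (V_2)^\beta$, so $f$ becomes strictly grading-preserving with respect to $Q_2$ on both sides.

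For the kernel, I would now take $v\in\ker f$ and decompose $v=\sum_\beta v^\beta$ with $v^\beta\in(V_1)^\beta$. Since $f(v^\beta)\in (V_2)^\beta$ and $V_2=\bigoplus_\beta (V_2)^\beta$, the identity $\sum_\beta f(v^\beta)=0$ forces $f(v^\beta)=0$ for each $\beta$; thus each homogeneous component of $v$ lies in $\ker f$, proving that $\ker f$ is $Q_2$-graded and hence a GVA-ideal. For the image, given $w=f(v)\in\mathrm{im}\,f$, I would decompose $v=\sum_\beta v^\beta$ in the $Q_2$-grading of $V_1$; then $w=\sum_\beta f(v^\beta)$ is the $Q_2$-homogeneous decomposition of $w$ in $V_2$, and each piece $f(v^\beta)$ lies in $\mathrm{im}\,f$. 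Combined with the fact that the image is already a subweak GVA, this shows $\mathrm{im}\,f$ is a $Q_2$-graded subspace and hence a subGVA.

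The only conceptual obstacle is keeping track of the fact that a GVA does not come with a canonical charge factor --- the definition merely requires existence of one --- so one is free to replace the ``native'' charge factor on $V_1$ by the pullback along $\phi$. Once that flexibility is used, both assertions follow almost immediately from homogeneity considerations, with no extra verification of the locality or field axioms needed (they were absorbed into the remark quoted above).
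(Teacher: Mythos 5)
Your proposal is correct and follows essentially the same route as the paper: both pass to the pullback charge factor $(Q_2,\Delta_2,(\mbox{induced grading}),\eta_2)$ on $V_1$ noted in the remark preceding the proposition, and deduce $Q_2$-homogeneity of the kernel from the vanishing of each $f(v^\beta)$ and of the image from the induced grading on $W$. Your treatment of the image is slightly more explicit than the paper's one-line assertion, but the underlying argument is identical.
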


\begin{proof}
Let $(Q_1,\Delta_1,\eta_1)$ and $(Q_2,\Delta_2,\eta_2)$ be charge factors on $V_1$ and $V_2$ and $\phi:Q_1\rightarrow Q_2$ be a group-homomorphism satisfying 
the charge compatibility axiom on the homomorphism $f$.
The image of $f$ is a subVOA of $V_2$ with the charge factor $(Q_2,\Delta_2,\eta_2)$ (with the induced grading on $W$).
Now, we show that $ker(f)$ is an ideal of $V_1$.
We have $Y(\ker(f),z)V\subset \ker(f)[[z,z^\mathbb{C}]]$ and $T(\ker(f))\subset \ker(f)$.
Let $v$ be an element of $\ker(f)$.
Consider the induced grading  $V_1=\bigoplus_{\beta\in Q_2}(V_1)^\beta$, where $(V_1)^\beta=\bigoplus_{\alpha\in Q_1,\phi(\alpha)=\beta}(V_1)^\alpha$.
Then the quadruple $(Q_2,\Delta_2, ($the induced grading on $V_1),\eta_2)$ is a charge factor on $V_1$.
The charge factors together with the identity map $Q_2\rightarrow Q_2$ satisfy again the charge compatibility axiom.
The vector $v$ has the form $v=\sum_{\beta\in Q_2} v^\beta$ 
with $v^\beta\in (V_1)^\beta$.
Then, by the charge compatibiliy axiom, we have $f(v^\beta)=0$ for each $\beta\in Q_2$.
Hence, $\ker(f)$ is a $Q_2$-graded subspace, which completes the proof.
\end{proof}

Let $V$ be a GVA and $(Q,\Delta,\eta)$ a charge factor on $V$.
Let $B$ be a subset of $V$.
Suppose that each element of $B$ is a $Q$-homogeneous vector.

\begin{prop}
The weak GVA ideal generated by $B$ is a GVA ideal of $V$.
The subweak GVA generated by $B$ is a subGVA of $V$.
\end{prop}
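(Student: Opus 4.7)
The plan is to observe that for both assertions the only additional requirement, beyond being a weak GVA ideal (respectively a subweak GVA), is $Q$-gradedness for the fixed charge factor $(Q,\Delta,\eta)$. I would therefore extract the $Q$-graded part of the generated object, verify that this extracted piece already satisfies the relevant closure properties and still contains $B$, and conclude by minimality that it coincides with the generated object itself.

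Concretely, let $I$ denote the weak GVA ideal generated by $B$ and set
\[
I' \;:=\; \bigoplus_{\alpha\in Q}\bigl(I\cap V^\alpha\bigr) \;\subseteq\; I.
\]
First I would verify that $I'$ is itself a weak GVA ideal containing $B$. Since the elements of $B$ are $Q$-homogeneous, $B\subseteq I'$; $T$-invariance of $I'$ follows from $T(V^\alpha)\subseteq V^\alpha$ combined with $T(I)\subseteq I$. For the ideal property, given $v\in V$ and $u\in I'$, I decompose $v=\sum_\gamma v^\gamma$ and $u=\sum_\alpha u^\alpha$ into their (finitely many nonzero) $Q$-homogeneous components; each $u^\alpha$ lies in $I$ by construction of $I'$, so by the weak ideal property $v^\gamma(n)u^\alpha\in I$, while the field-and-locality axiom places $v^\gamma(n)u^\alpha$ in $V^{\gamma+\alpha}$. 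Hence $v^\gamma(n)u^\alpha\in I\cap V^{\gamma+\alpha}\subseteq I'$, and summing yields $Y(v,z)u\in I'[[z,z^{\mathbb{C}}]]$. By minimality of $I$ we then have $I\subseteq I'$, hence $I=I'$ is $Q$-graded, i.e.\ a GVA ideal.

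The argument for the subweak GVA $W$ generated by $B$ is essentially identical. I set $W'=\bigoplus_\alpha(W\cap V^\alpha)$. Then $|0\rangle\in V^0\cap W\subseteq W'$, $T$-invariance of $W'$ follows from that of $W$ and each $V^\alpha$, and the closure condition $Y(w_1,z)w_2\in W'[[z,z^{\mathbb{C}}]]$ for $w_1,w_2\in W'$ is obtained by decomposing the $w_i$ into $Q$-homogeneous components (each still in $W$) and invoking the subweak GVA property of $W$ together with charge-additivity of the modes. Thus $W'$ is a subweak GVA containing $B$, and minimality forces $W=W'$.

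The main---and essentially only---subtle point is that, for the strategy to work, the $Q$-homogeneous components of an element of $I'$ (respectively $W'$) must already lie in $I$ (respectively $W$); this is exactly what is built into the definitions of $I'$ and $W'$. Everything else is automatic: finiteness of the $Q$-homogeneous decomposition of any element of $V$, the fact that modes raise charge additively, and the $T$-invariance of each $V^\alpha$ all follow directly from the axioms of a GVA recalled earlier in this section.
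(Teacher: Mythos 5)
Your proof is correct, but it takes a genuinely different route from the paper. The paper proves only the ideal statement explicitly, and does so by invoking the associativity of GVAs (citing \cite{BK}) to exhibit an explicit spanning set of $I$ by $Q$-homogeneous monomials $a_1(n_1)\cdots a_m(n_m)b$ with $a_i\in V$ homogeneous and $b\in B$; $Q$-gradedness of $I$ is then immediate. You instead run the standard ``graded core'' argument: form $I'=\bigoplus_\alpha(I\cap V^\alpha)$, check that it is again a weak GVA ideal containing $B$ (the key point, which you correctly isolate, being that homogeneous components of elements of $I'$ lie in $I$ by construction, so the field axiom's charge-additivity of modes plus the ideal property of $I$ close $I'$ under the required operations), and conclude $I=I'$ by minimality. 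Your approach is more elementary --- it avoids the nontrivial associativity theorem entirely and relies only on the axioms (grading-preservation of $T$, charge-additivity of modes, finiteness of homogeneous decompositions) together with the existence of a minimal generated object, which the paper has already established. It also treats both assertions uniformly, whereas the paper leaves the subalgebra case to the reader. What the paper's argument buys in exchange is an explicit $Q$-homogeneous spanning set of the generated ideal, which is of independent use; your argument establishes gradedness without producing such a description.
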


\begin{proof}
We show the former statement.
Let $I$ denote the weak GVA ideal generated by $B$.
By the associativity of the GVAs (\cite{BK}), the space $I$ is spanned by the monomials of the form
$
a_1(n_1)\ldots a_m(n_m)b
$
with a non-negative integer $m\geq 0$, $Q$-homogeneous elements $a_1,\ldots,a_m \in V$, 
complex numbers $n_1,\ldots,n_m \in \mathbb{C}$ and a vector $b\in B$.
Hence, $I$ is $Q$-graded, which completes the proof.
\end{proof}

Let $Q$ be an abelian group, $V$ a $Q$-graded vector space, $|0\rangle$ an element of $V$ and $T$ a linear map on $V$.
Let $\Delta:Q\times Q\rightarrow \mathbb{C}/\mathbb{Z}$ be a symmetric bilinear map and
 $\eta:Q\times Q\rightarrow \mathbb{C}^\times$  a bimultiplicative function with $\eta(\alpha,\beta)\eta(\beta,\alpha)=e^{-2\pi i \Delta(\alpha,\beta)}$.
Let $\{\phi_i(z)\}_{i\in I}$ be a system of fields on $V$.
Denote the charge of $\phi_i(z)$ by $\alpha_i$ .
Assume $\phi_i(z)$ and $\phi_j(z)$ are mutually $\eta(\alpha_i,\alpha_j)$-local for every $i,j\in I$.
Assume $\phi_i(z)$ is translation covariant with respect to $T$, that is, $[T, \phi_i(z)]=\partial_z \phi_i(z)$, for every $i\in I$.
Assume the coefficients of all formal series $\phi_{i_1}(z_1)\cdots \phi_{i_n}(z_n)|0\rangle$ ($n\in \mathbb{Z}_+$) span the whole vector space $V$.

\begin{thm}\label{sec:thmbk}(Reconstruction theorem \cite{BK})
The fields $\{\phi_i(z)\}$ generates on $V$ a unique structure of a generalized vertex algebra with a charge factor $(Q,\Delta,\eta)$.
\end{thm}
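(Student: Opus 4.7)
The plan is to adapt the standard reconstruction argument for vertex algebras to the complex-charge generalized setting, as carried out in \cite{BK}. I would split the proof into uniqueness and existence parts, the existence part being where the real work lies.

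For uniqueness, let $(V,Y,|0\rangle,T)$ be any GVA with charge factor $(Q,\Delta,\eta)$ satisfying $Y(a_i,z)=\phi_i(z)$, where $a_i:=\phi_i(z)|0\rangle|_{z=0}\in V^{\alpha_i}$ (well-defined since $\Delta(\alpha_i,0)=0$ forces $\phi_i(z)|0\rangle\in V^{\alpha_i}[[z]]$). By the GVA associativity/$n$-product formula from \cite{BK}, the field $Y(a_{i_1}(n_1)\cdots a_{i_m}(n_m)|0\rangle,z)$ is obtained recursively from $\phi_{i_1}(z),\ldots,\phi_{i_m}(z)$, and since such monomials span $V$ by hypothesis, $Y$ is uniquely determined on all of $V$.

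For existence, I would set $Y(a_i,z):=\phi_i(z)$ on generators and extend by the GVA $n$-th product recursion: given $\eta(\alpha,\beta)$-mutually local fields $\phi,\psi$ of charges $\alpha,\beta$ and $n\in\Delta(\alpha,\beta)$, define
\[
(\phi_{(n)}\psi)(w):=\mathrm{Res}_z\bigl(\iota_{z,w}(z-w)^n\phi(z)\psi(w)-\eta(\alpha,\beta)\iota_{w,z}(z-w)^n\psi(w)\phi(z)\bigr).
\]
Iterating this construction starting from $\{\phi_i(z)\}$ produces a field $Y(v,z)$ for every monomial $v$ of the form $a_{i_1}(n_1)\cdots a_{i_m}(n_m)|0\rangle$. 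Translation covariance and the vacuum axiom are then verified by induction on $m$ using $T|0\rangle=0$, $[T,\phi_i(z)]=\partial_z\phi_i(z)$, and the derivation property of $T$ with respect to $(\cdot)_{(n)}(\cdot)$. The charge-grading property ($Y(v,z)$ being a field of charge $\gamma(v)$) follows inductively from bimultiplicativity of $\Delta$.

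The main obstacle is a Dong-type lemma in the GVA setting: I must show that if $\phi,\psi,\chi$ are pairwise $\eta$-mutually local of charges $\alpha,\beta,\gamma$, then $\phi_{(n)}\psi$ is $\eta(\alpha+\beta,\gamma)$-mutually local with $\chi$. Carrying this through requires careful bookkeeping of the phase factors $e^{\pi i N}$ that appear in the $\iota_{w,z}$-expansions together with the bimultiplicativity of $\eta$ and the compatibility identity $\eta(\alpha,\beta)\eta(\beta,\alpha)=e^{-2\pi i\Delta(\alpha,\beta)}$, which is precisely where the generalized (as opposed to super) nature of the theory enters. Once this Dong lemma is in place, the class of fields produced by iterated $n$-products is pairwise $\eta$-mutually local, the locality and field axioms hold by construction, and well-definedness of the assignment $v\mapsto Y(v,z)$ on the (non-unique) monomial presentations of each $v$ follows from the uniqueness argument applied within the constructed family, completing the proof.
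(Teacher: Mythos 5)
The paper offers no proof of this statement: it is imported verbatim from \cite{BK} as a black box, so there is no ``paper's own proof'' to compare against. Your sketch follows the standard reconstruction strategy of \cite{BK} (and of Roitman/Kac in the super case): extend $Y$ to monomials by iterated $n$-th products of fields, prove a Dong-type lemma to keep the generated family pairwise $\eta$-local, and check the vacuum and translation axioms by induction. That is the right architecture, and your identification of the GVA Dong lemma, with its $e^{\pi i N}$ phase bookkeeping and the constraint $\eta(\alpha,\beta)\eta(\beta,\alpha)=e^{-2\pi i\Delta(\alpha,\beta)}$, as the technical heart is accurate. One point deserves more care than your last sentence gives it: well-definedness of $v\mapsto Y(v,z)$ across different monomial presentations of $v$ does not follow from the uniqueness argument you state (which recursively determines $Y$ \emph{assuming} a GVA structure already exists); what is actually needed is a Goddard-type uniqueness lemma, namely that a translation-covariant field which is mutually local with all the $\phi_i(z)$ and satisfies $\phi(z)|0\rangle|_{z=0}=0$ must vanish, so that the evaluation map $\phi(z)\mapsto\phi(z)|0\rangle|_{z=0}$ is injective on the constructed family of fields. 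You should also note that in the generalized setting the residue defining $\phi_{(n)}\psi$ for $n\in\Delta(\alpha,\beta)$ must be interpreted charge-component by charge-component (the relevant exponent lands in $-1+\mathbb{Z}$ only after the fractional parts coming from $\Delta$ cancel), but this is a routine adjustment. With the Goddard lemma supplied, your outline matches the argument in \cite{BK}.
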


Sometimes, we use another grading.

\begin{dfn}
The $\mathbb{C}$-grading $V=\bigoplus_{n\in\mathbb{C}}V_n$ is called a {\it weight grading} if
\begin{itemize}
\item there exists a charge factor $(Q,\Delta,\eta)$ such that the grading is compatible with the $Q$-grading $V=\bigoplus_{\alpha\in Q}V^\alpha$ (we call such a charge factor {\it compatible} with the weight-grading);
\item $a(k) V_n\subset V_{m+n-k-1} (a\in V_m, k\in \mathbb{C})$.
\end{itemize}
\end{dfn}

\subsection{The current algebras}

Let $(V,Y,|0\rangle,T)$ be a generalized vertex algebra equipped with a weight-grading $V=\bigoplus_{d\in\mathbb{C}} V(d)$.
Fix a charge factor $(Q,\Delta,\eta)$ compatible with the weight-grading.
Assume that the weight-grading is bounded below, that is, there exists $N\in\mathbb{R}$ such that
$V=\bigoplus_{d\in\mathbb{C},\mathrm{Re}(d)\geq N} V(d)$.

Let us consider the space
\[
L(V)=V[t,t^{-1}]=V\otimes_{\mathbf{k}}\mathbf{k}[t,t^{\mathbb{C}}].
\]
Let $a$ be a weight-homogeneous vector of $V$ and $m$ a complex number. 
We denote $a(m)=a\otimes t^m$.
We set $\mathrm{wt}(a(m))=\mathrm{wt}(a) -m-1$.
Then we have the grading $L(V)=\bigoplus_{d\in\mathbb{C}} (L(V))(d)$ on L(V).
Consider the quotient space
\[
W=L(V)/\partial L(V),
\]
where $\partial:L(V)\rightarrow L(V)$ is defined by
\[
\partial (u\otimes t^n)=T u\otimes t^n+n u\otimes t^{n-1}.
\]
Then $W$ admits the grading $W=\bigoplus_{d} W(d)$.

Let $A$ denote the symmetric algebra of $W$.
The grading on $W$ induces the grading $A=\bigoplus_{d\in\mathbb{C}} A(d)$ on $A$.
Define a filtration by $\mathrm{F}_p A=\bigoplus_{d\in\mathbb{C}, \mathrm{Re}(d)\leq p} A(d)$ ($p\in \mathbb{Z}$).
Since it is separated,
$A$ becomes a filtered graded algebra.
Consider the standard filtered componentwise topology on $A$ and let $\hat{A}$ denote the filtered componentwise completion.

Let $a,b$ be weight- and charge-homogeneous element of $V$ with the charges $\alpha$ and $\beta$.
Consider the following relations in $\hat{A}$ for $m,k\in \mathbb{C}$ and $n\in \Delta(\alpha,\beta)$.
\begin{eqnarray*}
\mathrm{B}_{m,n,k}(a,b)&=&\sum_{j\in\mathbb{Z}_+} \begin{pmatrix}m\\ j \end{pmatrix} (a(n+j) b)(m+k-j) 
\\ && - \sum_{j\in\mathbb{Z}_+} (-1)^j \begin{pmatrix}n\\ j \end{pmatrix} \bigl( a(m+n-j) b(k+j)
\\ &&-\eta(\alpha,\beta)
e^{\pi i n} b(n+k-j) a(m+j)\bigr).
\end{eqnarray*}

The first sum in the RHS is actually a finite sum whereas the second and the last are infinite sums which 
converge in the linear topology of $\hat{A}(\mathrm{wt}(a)+\mathrm{wt}(b)-n-m-k-2)$.
The relations $\mathrm{B}_{m,n,k}(a,b)=0$ are nothing else but the Borcherds identities.

Let $\mathbb{B}$ be the two-sided ideal of $\hat{A}$ generated by the elements $\mathrm{B}_{m,n,k}$, and 
let $\hat{\mathbb{B}}$ be the componentwise closure of $\mathbb{B}$.
Then $\mathbb{B}$ is also an ideal of $\hat{A}$.

We now define the {\it current algebra} $U$ associated with $V$ to be the quotient algebra of $\hat{A}$ by the ideal $\hat{\mathbb{B}}$:
\[
U=\hat{A}/\hat{\mathbb{B}}.
\]
Then U is a bi-graded algebra, since $\hat{\mathbb{B}}$ is a graded ideal.

\subsection{Some lemmas}

Now, we show some lemmas for the following section.

Let $V$ be a GVA with a charge factor $(Q,\Delta,\eta)$.

Let $a,b,c$ be $Q$-homogeneous elements of $V$.
Let $\alpha$ and $\beta$ denote the charge of $a$ and $b$.
Let $N$ be a locality bound of $Y(a,z)$ and $Y(b,z)$.
Let $s,t$ be complex numbers.

\begin{lem}\label{sec:lemlocal3}
\begin{eqnarray}\label{eqn:local3}
a(s) b(t) c=\eta(\alpha,\beta)e^{\pi i N}\sum_{j\in \mathbb{Z}_+} (-1)^j \begin{pmatrix} N\\ j\end{pmatrix} b(t+N-j) a(s-N+j)c
\nonumber \\
\quad\quad-\sum_{j\in \mathbb{Z}_+} (-1)^{j+1} \begin{pmatrix} N\\ j+1\end{pmatrix}
a(s-1-j)b(t+1+j)c.
\end{eqnarray}
\end{lem}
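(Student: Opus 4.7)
The plan is to derive this identity directly from the locality relation (\ref{eqn:local2}), which, applied to the vector $c$ and with the shift $s \mapsto s-N$, becomes
\begin{equation*}
\sum_{j\in\mathbb{Z}_+} (-1)^j \binom{N}{j} a(s-j) b(t+j) c \;=\; \eta(\alpha,\beta)e^{\pi i N} \sum_{j\in\mathbb{Z}_+} (-1)^j \binom{N}{j} b(t+N-j) a(s-N+j) c .
\end{equation*}
Note that both sums are in fact finite when acting on $c$: since $Y(b,z)$ is a field of charge $\beta$, the vector $b(t+j)c$ vanishes for all sufficiently large $j$, and similarly for $a(s-N+j)c$. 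So there is no convergence issue to worry about.

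Next, I would split off the $j=0$ term from the left-hand side, which is precisely $a(s)b(t)c$, and move the remaining terms ($j \geq 1$) to the right. This yields
\begin{equation*}
a(s)b(t)c \;=\; \eta(\alpha,\beta)e^{\pi i N}\sum_{j\in\mathbb{Z}_+} (-1)^j \binom{N}{j} b(t+N-j) a(s-N+j) c \;-\; \sum_{j\geq 1} (-1)^j \binom{N}{j} a(s-j) b(t+j) c .
\end{equation*}
Finally I reindex the last sum by $j \mapsto j+1$ to match the form in the statement, producing the sum $-\sum_{j\in\mathbb{Z}_+}(-1)^{j+1}\binom{N}{j+1}a(s-1-j)b(t+1+j)c$.

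There is no real obstacle here: the identity is pure bookkeeping on top of (\ref{eqn:local2}). The only point that deserves a line of care is the justification that the shift $s \mapsto s-N$ is legitimate, which is immediate because (\ref{eqn:local2}) is asserted to hold for all $s,t \in \mathbb{C}$. Everything else is index manipulation.
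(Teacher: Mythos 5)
Your proof is correct and is essentially the paper's own argument: the paper's proof of Lemma \ref{sec:lemlocal3} is the single line ``By equality (\ref{eqn:local2}), we have the lemma,'' and your substitution $s\mapsto s-N$, splitting off the $j=0$ term, and reindexing is exactly the bookkeeping that line leaves implicit. Your remark that both sums terminate because $Y(a,z)$ and $Y(b,z)$ are fields is a welcome extra precision, not a deviation.
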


\begin{proof}
By equality (\ref{eqn:local2}), we have the lemma.
\end{proof}

\begin{lem}\label{sec:lemlocal4}
\begin{eqnarray*}
a(s) b(t) c=\sum_{j=-M}^M r_j \cdot b(t+N-j)a(s-N+j)c,
\end{eqnarray*}
where $M$ is a non-negative integer and $r_{-M},\ldots,r_M$ are complex numbers.
\end{lem}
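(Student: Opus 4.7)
The plan is to prove the lemma by iterated application of Lemma \ref{sec:lemlocal3}, using the field axiom to guarantee that the iteration terminates after finitely many steps.

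First, I would record the consequence of the field axiom for the fixed vector $c$: since $Y(a,z)c$ and $Y(b,z)c$ are fields, there is a bound $K$ such that $a(k)c=0$ and $b(k)c=0$ whenever $\mathrm{Re}(k)>K$. Applying Lemma \ref{sec:lemlocal3} once to $a(s)b(t)c$ then yields two genuinely finite sums: the first, $\eta(\alpha,\beta)e^{\pi i N}\sum_{j\in \mathbb{Z}_+}(-1)^j\binom{N}{j}b(t+N-j)a(s-N+j)c$, is cut off by $a(s-N+j)c=0$ once $\mathrm{Re}(s-N+j)>K$ and already has the form prescribed in the lemma (with non-negative integer indices $j$); the second sum reduces to the finitely many $j$ with $\mathrm{Re}(t+1+j)\le K$ but still contains triple products of the form $a(\cdot)b(\cdot)c$ that need to be rearranged.

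Next, I would iterate by applying Lemma \ref{sec:lemlocal3} to each surviving $a(s-1-j)b(t+1+j)c$. The first sum of each such application contributes further terms of the form $b(t+1+j+N-k)a(s-1-j-N+k)c$, which after reindexing $k'=k-1-j$ slot into the template $b(t+N-k')a(s-N+k')c$ with $k'\in\mathbb{Z}$. This fits the target form because the total mode index $s+t$ is preserved by every locality rearrangement. The second sum of each application produces new $a(\cdot)b(\cdot)c$ terms whose $b$-index has real part at least $\mathrm{Re}(t)+2$.

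The hard part is the termination argument. Because the real part of the $b$-index in every residual $a(\cdot)b(\cdot)c$ summand strictly grows by at least one with every iteration, after at most $\lceil K-\mathrm{Re}(t)\rceil$ steps all remaining such terms have $b$-index with real part exceeding $K$, and they are annihilated by $c$. Collecting the accumulated $b(t+N-j)a(s-N+j)c$ summands then produces a sum indexed by a finite subset of $\mathbb{Z}$, and choosing $M$ large enough to contain all those indices yields the claimed expression. Beyond this bookkeeping I expect no further subtleties, since every reordering used is an instance of the single identity in Lemma \ref{sec:lemlocal3}.
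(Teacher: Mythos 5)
Your argument is correct and is essentially the expanded form of the paper's one-line proof: the paper likewise invokes Lemma \ref{sec:lemlocal3} together with the fact that $Y(b,z)$ (and $Y(a,z)$) is a field, which is exactly your truncation-and-iteration scheme, with the residual $a(\cdot)b(\cdot)c$ terms dying once their $b$-index exceeds the field bound on $c$. No discrepancy to report.
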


\begin{proof}
Since $Y(b,z)$ is a field, by Lemma \ref{sec:lemlocal3}, we have the lemma.
\end{proof}

\subsection{A spanning set of the generalized vertex algebras}
\label{sec:secspan}
We construct a spanning set of certain generalized vertex algebras.
Let $(V,Y,|0\rangle, T)$ be a generalized vertex algebra and $(Q,\Delta,\eta)$ be a charge factor on $V$.
Let $B$ be a totally ordered set with a map $i:B\rightarrow V$.
Suppose that each element of $i(B)$ is a non-zero $Q$-homogeneous vector.
Suppose that the subGVA generated by the subset $i(B)\subset V$ agrees with $V$.

Let $\mathcal{N}:B\times B\rightarrow \mathbb{C}$ be a symmetric map.
For any $a,b\in B$, suppose that the number $\mathcal{N}(a,b)$ is a locality bound of the fields $Y(i(a),z)$ and $Y(i(b),z)$.
We call such a map $\mathcal{N}$ a {\it locality bound} on $(B,i)$.
We denote $a=i(a)$ ($a\in B$).

For a non-empty finite ordered set $X=\{n_1<\cdots<n_l\}$ and operators $f_n$ ($n\in X$), we denote the operator $f_{n_l}\circ \cdots \circ f_{n_1}$ by 
$\prod_{n\in X} f_n$.
When $X$ is empty, $\prod_{n\in X} f_n$ denotes the identity operator.

Let $k$ be a non-negative integer.
Let $a_1,\ldots,a_k$ be elements of the set $B$ and $m_1,\ldots,m_k$ complex numbers.
Define $\Psi(a_1,m_1;\cdots;a_k,m_k)$  by recursively setting
\begin{eqnarray*}
&&\Psi(a_1,m_1;a_2,m_2;\cdots;a_k,m_k)\\
&& \quad\quad := a_k( m_k+{\textstyle \sum_{j=1}^{k-1}\mathcal{N}(a_k,a_j)}) \Psi(a_1,m_1;\cdots;a_{k-1},m_{k-1})
\end{eqnarray*}
and
\[
\Psi(\ ):=|0\rangle \quad\quad (k=0).
\]
For a non-negative integer $r\leq k$ and monomial $u=\Psi(a_1,m_1;\cdots;a_r,m_r)$, we put
\[
\Psi(u;a_{r+1},m_{r+1};\cdots;a_k,m_k):=\Psi(a_1,m_1;\cdots;a_k,m_k).
\]

Let $\leq$ denote the total order of the totally ordered set $B$.

Let $k$ be a non-negative integer.
Let $a_1,\ldots,a_k$ be elements of $B$ with $a_1\leq \cdots \leq a_k$.
Let $M$ be an integer.
Consider the subspace $
V(a_1,\ldots,a_k;M)\subset V
$
spanned by the elements
\begin{equation*}
\Psi(a_1,m_1;\cdots;a_k,m_k)=\prod_{i=1}^{k}\left(a_i( m_i+{\textstyle \sum_{j=1}^{i-1}\mathcal{N}(a_i,a_j)})
 \right) |0\rangle
\end{equation*}
with $m_i\in \mathbb{Z}$ ($i=1,\ldots,k$) such that
$m_1+\cdots+m_k=M$.
Note that when $k=0$, we have $V(\ ;M)=0$ if $M\neq 0$ and $V(\ ;0)=\mathbb{C}|0\rangle$.

\begin{lem}\label{sec:lemgen}
\[
V=\sum_{k\geq 0, a_1\leq \cdots\leq a_k\in B, M\in \mathbb{Z}} V(a_1,\ldots,a_k;M).
\]
\end{lem}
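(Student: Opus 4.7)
The plan is to reduce $V$ to a span of monomials of the form $a_{i_1}(n_1)\cdots a_{i_k}(n_k)|0\rangle$ with $a_{i_j}\in i(B)$, then reorder those monomials using locality, and finally reparametrize the exponents to match the definition of $\Psi$.

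\textbf{Step 1 (spanning by mode monomials).} Let $W$ be the linear span of all vectors $a_{i_1}(n_1)\cdots a_{i_k}(n_k)|0\rangle$ with $k\ge 0$, $a_{i_j}\in i(B)$, and $n_j\in\mathbb{C}$. I would verify that $W$ is a subGVA of $V$. It contains $|0\rangle$ (the $k=0$ term), contains $i(B)$ by the vacuum axiom, and is $T$-invariant because $[T,a(n)]=-n\,a(n-1)$ and $T|0\rangle=0$. Closure under $Y(w,z)$ for $w\in W$ follows by induction on $k$ from Borcherds associativity, which expresses $(a(n)b)(m)$ as a (formally convergent) combination of compositions of modes of $a$ and $b$. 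Hence $W$ is a subGVA containing $i(B)$, so by hypothesis $W=V$.

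\textbf{Step 2 (sorting via locality).} Given such a monomial, if two adjacent labels are in the wrong order relative to the ordering on $B$, Lemma \ref{sec:lemlocal4} rewrites the adjacent product $a(s)b(t)c$ as a finite sum $\sum_{j} r_j\, b(t+N-j)a(s-N+j)c$ with $N=\mathcal{N}(a,b)$. The exponent sum $s+t$ is preserved under each swap, and therefore $n_1+\cdots+n_k$ is preserved under any sequence of swaps. Iterating (bubble sort) converts the monomial into a finite linear combination of monomials whose labels are non-decreasing in the order induced by the application sequence prescribed by $\Psi$, still with the same total exponent sum.

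\textbf{Step 3 (reparametrization and integrality of $m_i$).} For a sorted monomial, write each exponent as $n_i = m_i + \sum_{j<i}\mathcal{N}(a_i,a_j)$. Then $\sum_i m_i = \sum_i n_i - C$, where $C = \sum_{1\le j<i\le k}\mathcal{N}(a_i,a_j)$ depends only on the multiset $\{a_1,\ldots,a_k\}$; hence the invariant total exponent sum translates into a fixed $M=\sum m_i$. To see $m_i\in\mathbb{Z}$, note that the field axiom forces $a_i(n_i)$, acting on a vector of charge $\sum_{j<i}\gamma(a_j)$, to vanish unless $n_i\equiv\Delta(\gamma(a_i),\sum_{j<i}\gamma(a_j))\equiv\sum_{j<i}\mathcal{N}(a_i,a_j)\pmod{\mathbb{Z}}$, i.e., unless $m_i\in\mathbb{Z}$. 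Thus every nonzero sorted monomial coincides with some $\Psi(a_1,m_1;\ldots;a_k,m_k)$ with integer $m_i$'s summing to a prescribed $M$, and the claim follows.

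\textbf{The main obstacle} is Step~1: proving that the span of mode monomials on $i(B)$ is closed under the full state-field correspondence requires the Borcherds identity for GVAs with its convergence in the current-algebra completion from Section~3. Once Step~1 is in place, the sorting in Step~2 and the integrality check in Step~3 are essentially bookkeeping built directly on Lemma \ref{sec:lemlocal4} and the field axiom.
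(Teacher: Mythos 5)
Your proposal is correct and follows essentially the same route as the paper: generation plus GVA associativity to reduce to mode monomials on $i(B)$, sorting via Lemma \ref{sec:lemlocal4}, and the observation that $\Delta(a_i,a_j)=\mathcal{N}(a_i,a_j)+\mathbb{Z}$ forces the $m_i$ to be integers (the paper phrases this as $v=0$ when some $m_j\notin\mathbb{Z}$). Your write-up simply makes explicit the bookkeeping (invariance of the exponent sum, hence of $M$) that the paper leaves implicit.
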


\begin{proof}
Since $B$ generates $V$, by the associativity of the generalized vertex algebras and Lemma \ref{sec:lemlocal4}, 
the space $V$ is spanned by the monomials of the form
$
v=\Psi(a_1,m_1;\cdots;a_k,m_k)
$
with $k\geq 0$ and $m_i\in \mathbb{C}$, $a_1,\ldots,a_k\in B$ with $a_1\leq \cdots \leq a_k$.
When $m_j \not \in \mathbb{Z}$ for some $j$, we have $v=0$, since $\Delta(a_i,a_j)=\mathcal{N}(a_i,a_j)+\mathbb{Z}$.
Thus, we have the lemma.
\end{proof}

Consider the subset $
\mathcal{C}(a_1,\ldots,a_k;M)
$
consisting of the elements
\[
\Psi(a_1,m_1;\cdots;a_k,m_k)=\prod_{i=1}^{k}\left(a_i( m_i+{\textstyle \sum_{j=1}^{i-1}\mathcal{N}(a_i,a_j)})
 \right) |0\rangle
\]
with negative integers $m_1,\ldots,m_k \in \mathbb{Z}_{<0}$ such that $m_1+\cdots+m_k=M$ and
 if $i<j$ and $a_i=a_j$, then $m_i\geq m_j$.
Note that if $M >-k$, then $\mathcal{C}(a_1,\ldots,a_k;M)$ is empty.
When $k=0$, we have $\mathcal{C}(\ ;M)=\emptyset$ if $M\neq 0$ and $\mathcal{C}(\ ;0)=\{|0\rangle\}$.

Set $\mathcal{C}(V,B,\mathcal{N})=\bigcup_{k\geq 0,a_1\leq \cdots\leq a_k\in B,M\leq -k} \mathcal{C}(a_1,\ldots,a_k;M)$.

\begin{thm}
Let $k$ be a non-negative integer, $M$ an integer and $a_1,\ldots,a_k$ elements of $B$ with $a_1\leq \cdots \leq a_k$.
\begin{enumerate}
\item The set $\mathcal{C}(a_1,\ldots,a_k;M)$ spans the vector space $V(a_1,\ldots,a_k;M)$.
\item If $M>-k$, then $V(a_1,\ldots,a_k;M)=0$.
\end{enumerate}
\end{thm}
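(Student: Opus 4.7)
Part~(2) follows immediately from Part~(1): if $M > -k$, no $k$-tuple of negative integers can sum to $M$ (the maximum of $\sum m_i$ over $m_i \in \mathbb{Z}_{<0}$ is $-k$), so $\mathcal{C}(a_1,\ldots,a_k;M) = \emptyset$ and Part~(1) gives $V(a_1,\ldots,a_k;M) = 0$. So the substance is Part~(1), which I would prove by induction on $k$. The base case $k=0$ is immediate from the definitions: $V(\,;0) = \mathbb{C}|0\rangle = \mathrm{span}\,\mathcal{C}(\,;0)$, and both sides vanish for $M \neq 0$.

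For the inductive step, I would use the recursion $\Psi(a_1,m_1;\cdots;a_k,m_k) = a_k(n_k)\Psi(a_1,m_1;\cdots;a_{k-1},m_{k-1})$ with $n_k = m_k + \sum_{j<k}\mathcal{N}(a_k,a_j)$. The inductive hypothesis places the inner factor in $\mathrm{span}\,\mathcal{C}(a_1,\ldots,a_{k-1};M-m_k)$, so the remaining task is to show $a_k(n_k)\psi \in \mathrm{span}\,\mathcal{C}(a_1,\ldots,a_k;M)$ for each $\psi \in \mathcal{C}(a_1,\ldots,a_{k-1};M-m_k)$. Two obstructions can prevent $a_k(n_k)\psi$ from already lying in $\mathcal{C}(a_1,\ldots,a_k;M)$: (A) $m_k \geq 0$, violating negativity; and (B) $m_k < 0$ but some $i < k$ has $a_i = a_k$ and $m_i < m_k$, violating the ordering.

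Both obstructions would be resolved by a secondary induction using Lemma~\ref{sec:lemlocal4}, which rewrites $a_k(n_k)a_{k-1}(n_{k-1})\cdots|0\rangle$ as a finite linear combination of terms of the form $a_{k-1}(n_{k-1}+N-j)\,a_k(n_k-N+j)\cdots|0\rangle$. In obstruction~(A), iterating the swap pushes $a_k$ all the way to the vacuum; the resulting $a_k(n)|0\rangle$ with $n \in \mathbb{Z}_{\geq 0}$ vanishes by the vacuum axiom, while the intermediate correction terms are absorbed by the secondary induction. In obstruction~(B), applying Lemma~\ref{sec:lemlocal4} with $a = b = a_k$ --- the $\eta(\gamma(a_k),\gamma(a_k))$-locality of $Y(a_k,z)$ with itself --- transposes the two equal-charge operators modulo correction terms with strictly fewer order-violations, and the inductive hypothesis then closes the argument.

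The main obstacle is the termination of this secondary reduction. Each application of Lemma~\ref{sec:lemlocal4} preserves both the length $k$ and the total weight $M$ while shifting adjacent indices in both directions, so one must equip the set of index tuples $(n_1,\ldots,n_k)$ with a well-order --- for instance, reverse-lexicographic on $(n_k,n_{k-1},\ldots,n_1)$ refined by the count of non-negative entries --- under which every correction term produced by Lemma~\ref{sec:lemlocal4} is strictly smaller. Verifying this strict decrease for all correction terms, especially in blocks where several consecutive $a_i$'s coincide, forms the technical heart of the argument.
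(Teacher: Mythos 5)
Your skeleton (induction on $k$, reduction to reordering $a_k(n_k)$ past a basis monomial via the locality formula) matches the paper's, but the one thing you explicitly defer --- termination of the rewriting --- is precisely where the paper's proof has its key idea, and the mechanism you propose in its place does not obviously work. The paper does not use a well-order on index tuples at all. Instead it observes that each application of the locality formula (\ref{eqn:local3}) strictly decreases the outermost index $m_k$ (equivalently, strictly increases $M-m_k$, the total degree of the length-$(k-1)$ inner factor), and then invokes \emph{part (2) of the statement at level $k-1$}: as soon as $M-m_k>-k+1$ the inner factor lies in $V(a_1,\ldots,a_{k-1};M-m_k)=0$ and the term dies. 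So the strictly monotone quantity is a single integer, and termination is guaranteed not by well-foundedness of an order on tuples but by a hard vanishing cutoff supplied by the inductive hypothesis. This also means parts (1) and (2) must be carried through the induction \emph{together}; your plan to deduce (2) from (1) only at the end hides the fact that (2) at level $k-1$ is the engine that makes the reduction at level $k$ terminate.

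Without that cutoff, your two fallback mechanisms are in trouble. Lemma \ref{sec:lemlocal4} is a full commutation, not ``swap plus correction terms in the original order'': every term on the right has $b$ outermost, with indices shifted by $N-j$ for $j$ ranging over a finite set in \emph{both} directions. Consequently, for obstruction (A), ``pushing $a_k$ all the way to the vacuum'' fails: the index on $a_k$ can drift downward past $-1$ during the successive swaps, so the terminal terms $a_{k-1}(\cdot)\cdots a_1(\cdot)a_k(n')|0\rangle$ need not vanish, and they sit in a scrambled order to which the inductive hypothesis does not apply. For obstruction (B), a reverse-lexicographic order on $(n_k,\ldots,n_1)$ refined by counting non-negative entries is not visibly decreased by every term of the rewriting, for the same reason (indices move both ways), and you give no argument that it is. The paper also needs a genuinely two-step manoeuvre in the case $a_k>a_{k-1}$: one application of (\ref{eqn:local3}) produces out-of-order terms $y$, which are handled by first reducing their inner part by induction and then applying (\ref{eqn:local3}) a \emph{second} time to restore the order, with the same $M-p$ cutoff controlling that inner loop. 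None of this is fatal to your outline, but as written the proof is incomplete at exactly the point you flag as its technical heart.
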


\begin{cor}\label{sec:span}
The set $\mathcal{C}(V,B,\mathcal{N})$ spans the vector space $V$.
\end{cor}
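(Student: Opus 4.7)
The plan is to derive the Corollary directly from Lemma \ref{sec:lemgen} together with the Theorem just stated; no further locality arguments are needed beyond those already used. The proof is essentially a bookkeeping exercise in reindexing.

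Concretely, I would start from Lemma \ref{sec:lemgen}, which already expresses
$$V = \sum_{k \geq 0,\ a_1 \leq \cdots \leq a_k,\ M \in \mathbb{Z}} V(a_1,\ldots,a_k;M).$$
Part (2) of the Theorem kills every summand with $M > -k$, so the sum may be restricted to tuples $(k, a_1 \leq \cdots \leq a_k, M)$ with $M \leq -k$. Part (1) of the Theorem then identifies each remaining summand as the span of the subset $\mathcal{C}(a_1,\ldots,a_k;M)$, and by construction each such subset is contained in the union $\mathcal{C}(V,B,\mathcal{N})=\bigcup_{k\geq 0,\, a_1 \leq \cdots \leq a_k,\, M \leq -k} \mathcal{C}(a_1,\ldots,a_k;M)$. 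Taking the union over all surviving tuples, $V$ is spanned by $\mathcal{C}(V,B,\mathcal{N})$, which is the claim.

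The only step to verify is the defining identity for $\mathcal{C}(V,B,\mathcal{N})$ itself, which is immediate from the notation introduced just before the Theorem. Since I am entitled to invoke the Theorem, there is no genuine obstacle at this level. All of the substantive content---the locality-driven reduction of arbitrary monomials to standard form via Lemmas \ref{sec:lemlocal3} and \ref{sec:lemlocal4}, and the vacuum-annihilation observation $a(m)|0\rangle = 0$ for $m \in \mathbb{Z}_{\geq 0}$ that forces every $m_j \leq -1$ (hence $M \leq -k$) in standard form---belongs to the proof of the Theorem, not to the Corollary.
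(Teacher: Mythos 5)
Your argument is exactly the paper's: the corollary is deduced by combining Lemma \ref{sec:lemgen} with the two parts of the preceding theorem, and your reindexing bookkeeping simply spells out what the paper leaves implicit. The proof is correct and coincides with the paper's approach.
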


The corollary follows from the theorem and Lemma \ref{sec:lemgen}.
In the rest of the section we prove the theorem.

\begin{proof}
Put $\mathcal{C}=\mathcal{C}(a_1,\ldots,a_k;M)$.
If $M>-k$ and $\mathcal{C}$ spans $V(a_1,\ldots,a_k;M)$, then $V(a_1,\ldots,a_k;M)=0$,
 since $\mathcal{C}$ is empty if $M>-k$.

Let $k$ be a non-negative integer, $M$ an integer and $a_1\leq \cdots \leq a_k$ elements of $B$.
Let $v$ be a monomial $\Psi(a_1,m_1;\cdots;a_k,m_k)$ with $m_1,\ldots,m_k\in \mathbb{C}$ with $m_1+\cdots+m_k=M$.
Put $\mathcal{C}=\mathcal{C}(a_1,\ldots,a_k;M)$.
It suffices to show $v\in \mathbb{C}$-span$(\mathcal{C})$ to prove (1) and (2).
We show it by induction on $k$.

Assume $k=1$.
If $M\leq -1$, then $v\in \mathcal{C}$.
If $M> -1$, then $v=0\in \mathbb{C}$-span$(\mathcal{C})$, since $a_1(j) |0\rangle=0$ for $j>-1$.

Assume $k\geq 2$.
The proof falls into two parts:

\begin{itemize}
\item $a_k=a_{k-1}$;
\item $a_k>a_{k-1}$.
\end{itemize}

\underline{{\bf The case $a_k=a_{k-1}$.}}
First, we prove the case $a_{k-1}=a_k$.
Put $v^0=v$.
By the induction hypothesis, if $M-m_k>-k+1$, then we have $v^0=0\in \mathbb{C}$-span$(\mathcal{C})$, since the vector
$\Psi(a_1,m_1;\cdots;a_{k-1},m_{k-1})$ belongs to $V(a_1,\ldots,a_{k-1};M-m_k)$.

Assume $M-m_k\leq -k+1$.
By the induction hypothesis,  $v^0$ is the linear sum of the monomials of the form
$
x^1=\Psi(u;a_k,m_k)=a_k( m_k+\sum_{j=1}^{k-1}\mathcal{N}(a_k,a_j)) u
$
with $u\in \mathcal{C}(a_1,\ldots,a_{k-1};M-m_k)$.
Then, the monomial $u$ has the form 
$
u=\Psi(a_1,n_1;\cdots;a_{k-1},n_{k-1})
$
with negative integers $n_1,\ldots, n_{k-1}\in \mathbb{Z}_{<0}$ such that $M-m_k=n_1+\cdots+n_{k-1}$ and if 
$i<j$ and $a_i=a_j$, then $n_i\geq n_j$.
When $m_k\leq n_{k-1}$, we have $x^1\in \mathcal{C}$.
Assume $m_k >n_{k-1}$.
Apply formula (\ref{eqn:local3}) to the monomial $x^1$ with $N=\mathcal{N}(a_k,a_{k-1})=\mathcal{N}(a_k,a_k)$.
Then,
$x^1$ becomes the sum of the monomials of the form
$
v^1=\Psi(a_1,m'_1;\cdots;a_k,m'_k)
$
with integers $m'_i\in \mathbb{Z}$ such that $m'_k<m_k$ and $M=m'_1+\cdots+m'_k$.
Since $m'_k<m_k$, the integer $M-m'_k$ is greater than $M-m_k$.
If $M-m'_k>-k+1$, then $v^1=0$.
Otherwise, again apply the procedure to $v^1$ and we obtain either $x^2\in\mathcal{C}$ or $v^2$ with $m''_i$ such that $m''_k<m'_k$.
Hence, by repeating the procedure, eventually we obtain 
$v \in \mathbb{C}$-span$(\mathcal{C})$.

\underline{{\bf The case $a_k>a_{k-1}$.}}
Now, we prove the case $a_k>a_{k-1}$.
We will apply formula (\ref{eqn:local3}) twice.
Put $m_{i,0}=m_i$ for $i=1,\ldots,k$.
When $m_{k,0}\leq -1$, by the induction hypothesis and $a_k>a_{k-1}$, we have $v\in\mathbb{C}$-span$(\mathcal{C})$.
Suppose $m_{k,0}> -1$.
Set $v^0=v$.
Apply formula (\ref{eqn:local3}) to the monomial $v$ with $N=\mathcal{N}(a_k,a_{k-1})$.
Then, $v^0$ becomes the sum of the monomials of the forms
\[
v^1=\Psi(a_1,m_{1,1};\cdots;a_k,m_{k,1})
\]
and
\begin{eqnarray*}
y&=&\Psi(a_1,m_{1,0};\cdots;a_{k-2},m_{k-2,0};a_k,q;a_{k-1},p)
\end{eqnarray*}
with integers $m_{1,1},\ldots,m_{k,1},p,q\in \mathbb{Z}$ with $m_{k,1}<m_{k,0}$, $M=m_{1,1}+\cdots+m_
{k,1}$ and $p+q=m_{k,0}+m_{k-1,0}$.

Now, we show $y\in\mathbb{C}$-span$(\mathcal{C})$.
Put $y^0=y$.
By the induction hypothesis, if $M-p>-k+1$, then $y^0=0$.
Assume $M-p\leq -k+1$.
By the induction hypothesis, $y^0$ is the sum of the monomials of the form
\[
z=\Psi( z';a_k,r;a_{k-1},p)
\]
with a negative integer $r$ and monomial $z'\in \mathcal{C}(a_1,\ldots,a_{k-2},M-p-r)$.
Then again apply formula (\ref{eqn:local3}) to $z$.
Then $z$ becomes the sum of the monomials of the forms
\[
g=\Psi(z';a_{k-1},p+l;a_k,r-l)
\]
and
\[
y^1=\Psi(z';a_k,r+l+1;a_{k-1},p-l-1)
\]
with non-negative integers $l\geq 0$.
By the induction hypothesis, $\Psi(z';a_{k-1},p+l)$
belongs to $\mathbb{C}$-span$(\mathcal{C}(a_1,\ldots,a_{k-1},M-r+l))$.
Since $r-l\leq -1$ and $a_k>a_{k-1}$, we have $g\in\mathbb{C}$-span$(\mathcal{C})$.
Since $l\geq 0$, the number $M-(p-l-1)$ is greater than $M-p$.
If $M-(p-l-1)>-k+1$, then $y^1=0$ by the induction hypothesis.
Otherwise, again apply the procedure to $y^1$.
By repeating the procedure, we eventually obtain $y\in\mathbb{C}$-span$(\mathcal{C})$.

Now, let us show $v^0\in \mathbb{C}$-span$(\mathcal{C})$.
Since $m_{k,1}<m_{k,0}$, the number $M-m_{k,1}$ is greater than $M-m_{k,0}$.
If $M-m_{k,1}>-k+1$, by the induction hypothesis, we have $v^1=0$.
Otherwise, again apply the procedure to $v^1$, then we obtain monomials $v^2$ with $m_{k,2}<m_{k,1}$.
By repeating the procedure, eventually we have $v=v^0\in \mathbb{C}$-span$(\mathcal{C})$.
Thus, we have the proposition.
\end{proof}

\section{The free generalized vertex algebras}

\subsection{The free generalized vertex algebras}
\label{sec:freegva}

Let $B$ be a set.
Let $Q_B=\bigoplus_{a\in B}\mathbb{Z}e(a)$ be a free abelian group with a $\mathbb{Z}$-basis $\{e(a)\}_{a\in B}$ indexed by $B$.
We identify $B$ and $\{e(a)\}_{a\in B}$.
Let $\mathcal{N}:B\times B\rightarrow \mathbb{C}$ be a symmetric function.
We bilinearly extend $\mathcal{N}$ to the symmetric bilinear form $\mathcal{N}:Q_B\times Q_B\rightarrow \mathbb{C}$.
Define $\Delta:Q_B\times Q_B\rightarrow \mathbb{C}/\mathbb{Z}$ to be $\Delta(\alpha,\beta)=\mathcal{N}(\alpha,\beta)+\mathbb{Z}$ ($\alpha,\beta\in Q_B$).
Let  $\eta:Q_B\times Q_B\rightarrow \mathbb{C}^\times$ be a bimultiplicative function
with $\eta(\alpha,\alpha)=e^{-\pi i \mathcal{N}(\alpha,\alpha)}$.
Note that we have $\eta(\alpha,\beta)\eta(\beta,\alpha)=e^{-2\pi i \Delta(\alpha,\beta)}$ for any $\alpha,\beta\in Q_B$.

Now we construct a free generalized vertex algebra $F=F(B,\mathcal{N},\eta)$.

Set $X=\{a(n)|a\in B, n\in\mathbb{C}\}(=B\times \mathbb{C})$.
Let $A=\mathbf{k}[X]$ denote the free associative algebra generated by $X$.
We denote the unit by $|0\rangle$.
Let $m$ denote a positive integer, $a_1,\ldots,a_m$ elements of $B$ and $n_1,\ldots,n_m$ elements of $\mathbb{C}$.
Set $T(|0\rangle)=0$ and $T(a_1(n_1) \cdots a_m(n_m))=\sum_{i=1}^m (-n_i)\cdot a_1(n_1)\cdots a_i(n_i-1)\cdots a_m(n_m)$ , and extend $T$ linearly on $A$.
The space $A$ is $Q_B$-graded by 
$\gamma(a_1(n_1)\cdots a_m(n_m))=e(a_1)+\cdots+e(a_m)$.
Actually, $A$ is $(Q_B)_+$-graded, where $(Q_B)_+=\bigoplus_{a\in B} \mathbb{Z}_+ e(a)$.
We define a $\mathbb{C}$-grading by 
$\delta (a_1(n_1) \cdots a_m(n_m))=\sum_{i=1}^m ( -n_i-1)$.
Then $A$ admits a $(Q_B\times \mathbb{C})$-graded algebra structure $A=\bigoplus_{\alpha\in Q_B,d\in\mathbb{C}}A(\alpha,d)$, where 
$A(\alpha,d)=\langle f\in A|\gamma(f)=\alpha,\delta (f)=d\rangle_{k}$.

Let us define a filtration $\{\mathrm{F}_p (Q_B\times\mathbb{C})\}_{p\in\mathbb{Z}}$ on the abelian group
 $Q_B\times \mathbb{C}$ by
\[
\mathrm{F}_p (Q_B\times\mathbb{C})=\{(\alpha,d)\in Q_B\times\mathbb{C}|\alpha\in Q_B,d\in \mathbb{C}, \mathrm{Re}(d)\leq p\}.
\]
Define a filtration on $A$ by $\mathrm{F}_p A=\bigoplus_{(\alpha,d)\in \mathrm{F}_p(Q_B\times \mathbb{C})}A(\alpha,d)$ ($p\in\mathbb{Z}$).
Set $\mathrm{I}_n (A(\alpha,d))=A(\alpha,d)\cap (A\cdot \mathrm{F}_{-n-1} A)$ 
($n\in\mathbb{Z}_+$, $(\alpha,d)\in Q_B\times \mathbb{C}$) and
consider the standard filtered componentwise topology on $A$ and
 denote the standard filtered componentwise completion by $\hat{A}$.

The operator $T:A\rightarrow A$ is continuous, since 
$T(\mathrm{I}_n(A(\alpha,d)))\subset \mathrm{I}_{n-1}(A(\alpha,d+1))$ for $n\geq 1$.

Let $v$ be an element of $\hat{A}$.
Then $v$ has the form $v=\prod_n (v_n+\mathrm{I}_n(A(\alpha,d)))$
with $v_n\in A(\alpha,d)$ ($n\in \mathbb{Z}_+$).
Set $T(v)= \prod_{n=0}^\infty (T(v_{n+1})+\mathrm{I}_{n}(A(\alpha,d+1)))$.
Since $T(\mathrm{I}_n(A(\alpha,d)))\subset \mathrm{I}_{n-1}(A(\alpha,d+1))$, the operator $T$ is well-defined.

\begin{lem}
The vector $T(v)$ belongs to $\hat{A}$.
\end{lem}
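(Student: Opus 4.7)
The plan is to verify that the candidate expression $T(v)=\prod_{n=0}^\infty (T(v_{n+1})+\mathrm{I}_n(A(\alpha,d+1)))$ defines a compatible family with respect to the inverse system $\{A(\alpha,d+1)/\mathrm{I}_n(A(\alpha,d+1))\}_n$, which is exactly what it means to lie in $\hat A(\alpha,d+1)\subset \hat A$. Concretely, two things must be checked: first, that $T(v_{n+1})$ really is an element of $A(\alpha,d+1)$ (so that the cosets are well-defined), and second, that the image of $T(v_{n+2})+\mathrm{I}_{n+1}(A(\alpha,d+1))$ under the natural projection to $A(\alpha,d+1)/\mathrm{I}_n(A(\alpha,d+1))$ agrees with $T(v_{n+1})+\mathrm{I}_n(A(\alpha,d+1))$, i.e., $T(v_{n+2})-T(v_{n+1})\in \mathrm{I}_n(A(\alpha,d+1))$.

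The first point is immediate from inspection of the defining formula for $T$: each summand of $T(a_1(n_1)\cdots a_m(n_m))$ retains the same factors $a_i$ (so $\gamma$ is preserved) and has one of the $n_i$ replaced by $n_i-1$ (so $\delta$ increases by exactly $1$). Hence $T$ maps $A(\alpha,d)$ into $A(\alpha,d+1)$, and in particular $T(v_{n+1})\in A(\alpha,d+1)$ for every $n$. For the second point, use the assumed compatibility of the sequence $(v_n)$, which says $v_{n+2}-v_{n+1}\in \mathrm{I}_{n+1}(A(\alpha,d))$, together with the bound $T(\mathrm{I}_{n+1}(A(\alpha,d)))\subset \mathrm{I}_n(A(\alpha,d+1))$ recorded just before the lemma. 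Applying $T$ to the difference gives
\[
T(v_{n+2})-T(v_{n+1}) \;=\; T(v_{n+2}-v_{n+1}) \;\in\; \mathrm{I}_n(A(\alpha,d+1)),
\]
which is the required compatibility.

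There is essentially no obstacle here; the lemma is a direct consequence of the continuity statement $T(\mathrm{I}_n(A(\alpha,d)))\subset \mathrm{I}_{n-1}(A(\alpha,d+1))$ already in hand. The only point demanding care is the index shift: because $T$ drops the level of the ideal filtration by one, the $n$-th coordinate of $T(v)$ is read off from $v_{n+1}$ rather than $v_n$, which is precisely why the definition is phrased as $\prod_{n=0}^\infty (T(v_{n+1})+\mathrm{I}_n(A(\alpha,d+1)))$.
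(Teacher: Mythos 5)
Your proof is correct and is exactly the intended argument: the paper omits the proof as ``straightforward,'' and your verification (that $T$ raises $\delta$ by one so $T(v_{n+1})\in A(\alpha,d+1)$, and that the inclusion $T(\mathrm{I}_{n+1}(A(\alpha,d)))\subset \mathrm{I}_n(A(\alpha,d+1))$ yields the compatibility of the shifted sequence in the inverse limit) is precisely the routine check being left to the reader.
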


The proof is straightforward. Thus we have the continuous operator $T:\hat{A}\rightarrow \hat{A}$.

We define the map $\iota:A\rightarrow \hat{A}$ by $\iota(v)=\prod_n (v+\mathrm{I}_n(A(\alpha,d)))$ for $v\in A(\alpha,d)$ and extend it linearly on $A$.

\begin{lem}
The map $\iota$ is injective.
\end{lem}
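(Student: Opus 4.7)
The plan is to reduce injectivity of $\iota$ to a concrete statement about monomials in the free algebra $A$. By definition, $\iota(v)=0$ means that for each homogeneous component the projection $v+\mathrm{I}_n(A(\alpha,d))=0$ for every $n\in\mathbb{Z}_+$. Hence the kernel of $\iota$ is
\[
\bigoplus_{(\alpha,d)} \bigcap_{n\geq 0}\mathrm{I}_n(A(\alpha,d)),
\]
so it suffices to prove that this intersection vanishes in every bi-homogeneous component. Because the $\mathrm{I}_n(A(\alpha,d))$ form a basis of neighborhoods of zero for the standard componentwise topology on $A(\alpha,d)$ and each is a subgroup of $A(\alpha,d)$, each is already open and thus closed, so no extra ``closure'' term appears in $\mathrm{I}_n(A(\alpha,d))=A(\alpha,d)\cap (A\cdot \mathrm{F}_{-n-1}A)$.

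Next I would give a combinatorial description of $A\cdot \mathrm{F}_{-n-1}A$. Since $A=\mathbf{k}[X]$ is the free associative algebra on $X$, the monomials $a_1(n_1)\cdots a_m(n_m)$ form a $\mathbf{k}$-basis of $A$, and $\mathrm{F}_{-n-1}A$ is spanned by those monomials whose $\delta$-weight has real part $\leq -n-1$. Because multiplying two monomials in a free algebra again yields a monomial, $A\cdot \mathrm{F}_{-n-1}A$ is spanned by the monomials $a_1(n_1)\cdots a_m(n_m)$ that admit a decomposition as a product of a left factor and a right factor with $\mathrm{Re}\bigl(\sum_{i=k+1}^m(-n_i-1)\bigr)\leq -n-1$ for some $0\leq k\leq m$, i.e.\ monomials having at least one \emph{suffix} whose $\delta$-weight has real part at most $-n-1$. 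Intersecting with the bi-homogeneous subspace $A(\alpha,d)$ just restricts the basis to those monomials with charge $\alpha$ and $\delta$-weight $d$.

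Now take any $v\in \bigcap_n \mathrm{I}_n(A(\alpha,d))$ and write $v=\sum_{i=1}^r c_i w_i$ as a finite linear combination of distinct monomials $w_i\in A(\alpha,d)$ with $c_i\neq 0$. Since the basis of monomials is compatible with the subspace $A\cdot \mathrm{F}_{-n-1}A$, membership $v\in A\cdot \mathrm{F}_{-n-1}A$ forces every $w_i$ individually to possess a suffix of $\delta$-weight $\leq -n-1$. Each fixed monomial $w_i$ has only finitely many suffixes and hence a definite lower bound
\[
M_i=\min\bigl\{\mathrm{Re}(\delta(s))\ :\ s\ \text{is a suffix of}\ w_i\bigr\}\in\mathbb{R}.
\]
Choosing $n$ with $-n-1<\min_i M_i$ shows that no $w_i$ can satisfy the required suffix condition, contradicting $c_i\neq 0$. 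Therefore $v=0$, which yields the desired injectivity.

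The routine but mildly delicate point is the second step: one must be careful to invoke the freeness of $A$ so that the product $A\cdot \mathrm{F}_{-n-1}A$ really does admit the clean monomial-suffix description, and that its intersection with $A(\alpha,d)$ is spanned by bi-homogeneous monomials with the suffix property. Everything else is bookkeeping; the injectivity is then immediate from the separatedness of the $\delta$-grading within a single component, ultimately inherited from the assumption that the filtration $\mathrm{F}$ on $Q_B\times\mathbb{C}$ is separated.
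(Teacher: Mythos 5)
Your argument is correct and takes essentially the same route as the paper's own proof: both reduce injectivity to showing $\bigcap_n \mathrm{I}_n(A(\alpha,d))=0$, characterize membership in $A\cdot\mathrm{F}_{-n-1}A$ via monomial suffixes of $\delta$-weight with real part at most $-n-1$, and use the finiteness of the set of suffixes of the monomials occurring in $v$ to choose $n$ with $v\notin\mathrm{I}_n(A(\alpha,d))$. No changes needed.
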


Before proving the lemma, we introduce some notations.
Let $\alpha$ be an element of $Q_B$.
Since $Q_B$ is freely generated by $B$, the element $\alpha$ has the form $\sum_{b\in B} n_b b$, where $n_b$ are non-negative integers and equal to zero except for finite $b\in B$.
We call $\sum_{n\in B} n_b$ the {\it height} of $\alpha$ and denote it by $\mathrm{ht}(\alpha)$.
Let $v=r\cdot a_1(n_1) \cdots a_m(n_m)$ be a monomial in $A$ with $r\in \mathbb{C}^\times$.
 For $i=1,\ldots,m$, we set $v_i=a_i(n_i) \cdots a_m(n_m)$.

\begin{proof}
Let $(\alpha,d)$ be an element of $Q_B\times \mathbb{C}$.
It suffices to show that the decreasing sequnce $\{\mathrm{I}_n(A(\alpha,d))\}_n$ is separated.
Put $X=\bigcap_{n=0}^\infty \mathrm{I}_n(A(\alpha,d))$.
Let $v$ be a non-zero element of $A(\alpha,d)$.
Then $v$ has the form $v=\sum_{i=1}^l v^{(i)}$ with $l\geq 1$ and non-zero monomials $v^{(1)},\ldots,v^{(l)}$.
Set $S=
\{(v^{(i)})_j|i=1,\ldots,l, j=1,\ldots, \mathrm{ht}(\alpha)\}
$.
Set $M=\min\{\mathrm{Re}(\delta(s))|s\in S\}$.
Set $N=\max\{0,-\lfloor M \rfloor\}$.
Then, for any $n\geq N$, we have $v\not\in \mathrm{I}_n(A(\alpha,d))$.
Thus, $X=0$, which completes the proof.
\end{proof}

We embed $B$ into $\hat{A}$ by $b\mapsto b_{-1} \mapsto \iota(b_{-1})$.

We define the left action of $A$ over $\hat{A}$ by
\[
A\hookrightarrow \hat{A}\longrightarrow \mathrm{End}(\hat{A}),
\]
where the second map is the left multiplication.

Let $I\subset\hat{A}$ denote the two-sided ideal generated by the locality relations $l(a,b,s,t)$
for all $a,b\in B$ and $s,t\in \mathbb{C}$.
It is a $Q_B\times \mathbb{C}$-graded subspace of $\hat{A}$.
Let $\hat{I}$ denote the componentwise closure of $I$.
Set  $U(B,\mathcal{N},\eta)=\hat{A}/\hat{I}$.
We call $U=U(B,\mathcal{N},\eta)$ a {\it free current algebra (free conformal algebra)}.
Let $J$ denote the left ideal of $\hat{A}$ generated by:
\begin{enumerate}
\item $a(n)$  ($a\in B$, $n\in \mathbb{C}\setminus \mathbb{Z}_{<0}$);
\item $a_1(n_1) \cdots a_m(n_m)$ ($m\geq 2$, $a_1,\ldots,a_m \in B$, 
$n_1,\ldots,n_m\in \mathbb{C}$ with $n_1\not\in \mathcal{N}(a_1,\sum_{i=1}^m a_i)+\mathbb{Z}$).
\end{enumerate}
It is a $Q_B\times \mathbb{C}$-graded subspace of $\hat{A}$.
Let $\hat{J}$ denote the componentwise closure of $J$.
Set $\hat{I}(\alpha,d)=\hat{I}\cap \hat{A}(\alpha,d)$ and $\hat{J}(\alpha,d)=\hat{J}\cap \hat{A}(\alpha,d)$.

Set $F(B,\mathcal{N},\eta)=U/((\hat{I}+\hat{J})/\hat{I})\cong \hat{A}/(\hat{I}+\hat{J})$.
Note that $F=F(B,\mathcal{N},\eta)$ has the induced grading
 $F=\bigoplus_{\alpha\in Q_B, d\in \mathbb{C}}F(\alpha,d)$.
 Since $T(I)\subset I$, $T(J)\subset J$ and $T$ is continuous, we have $T(\hat{I})\subset \hat{I}$ and $T(\hat{J})\subset \hat{J}$.
Hence $T$ induces the operators $T:U\rightarrow U$ and $T:F\rightarrow F$.
Since $\hat{I}$ and $\hat{J}$ are ideal, the left action of $A$ over $\hat{A}$ induces a left $A$-module structure on $U$ and $F$.

Since $I$ is generated by the vectors which is the sum of the monomials of the length two, $\langle a(-1)|a\in B\rangle_\mathbf{k}$ is embedded in $U$.
Consider $X\subset A=\mathbf{k}[X]$.
We have $X\cap (\hat{I}+\hat{J})=\{a(n)|a\in B, n\in \mathbb{C}\setminus \mathbb{Z}_{<0}\}$.
Therefore, $\langle a(-1)|a\in B\rangle_\mathbf{k}$ is embedded in $F$.
Thus we have embedding $B\hookrightarrow F$.

Let $a$ be an element of $B\subset F$.
Set $a(z)=\sum_{n\in\mathbb{C}}a(n) z^{-n-1} \in (\mathrm{End}(F))[[z,z^{\mathbb{C}}]]$.

\begin{prop}\label{sec:propazfield}
The formal power series $a(z)$ is a field on $F$.
\end{prop}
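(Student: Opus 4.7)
By unfolding the definition of a field of charge $e(a)$, the claim amounts to three properties: (i) each mode $a(n)$ preserves the $Q_B$-grading with shift $e(a)$; (ii) for $b\in F^\beta$, $a(n)b=0$ unless $n\in\mathcal{N}(e(a),\beta)+\mathbb{Z}$; and (iii) $a(n)b=0$ for $\mathrm{Re}(n)$ sufficiently large. Property (i) is automatic: the $Q_B$-grading on $A$ extends to $\hat A$ componentwise and the ideals $\hat I,\hat J$ are $Q_B$-graded, so $F$ inherits the grading and left multiplication by $a(n)\in A$ shifts charge by $e(a)$.

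For property (ii), I would pick any representative of $b\in F^\beta$ in $\hat A$ and decompose it in the completion topology into monomials $c_1(p_1)\cdots c_m(p_m)$ of total charge $\beta=\sum_i e(c_i)$. The product $a(n)c_1(p_1)\cdots c_m(p_m)$ is a monomial of length $m+1$ whose first index is $n$; by the defining generators of $J$, it lies in $J$ whenever $n$ falls outside the coset dictated by the total charge, which is $\mathcal{N}(e(a),\beta)+\mathbb{Z}$. Continuity of left multiplication, established in Section~2, then propagates the vanishing from monomials to arbitrary elements of $\hat A^\beta$, giving (ii).

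The main content is property (iii). Given a monomial representative $v=c_1(p_1)\cdots c_m(p_m)|0\rangle$ of $b$, the plan is to apply Lemma~\ref{sec:lemlocal3} repeatedly to push the leading operator $a(n_0)$ past each successive $c_i(p_i)$ toward the vacuum. Each application produces two families of terms: pushed terms in which $a$ has moved one step right with its index decreased by at most the locality bound $\mathcal{N}(a,c_i)$, and correction terms in which $a$ stays in its current position but with strictly smaller index and a neighboring index strictly larger. After $m$ complete pushes, $a$ acts directly on $|0\rangle$ with index at least $n_0-\sum_i\mathcal{N}(a,c_i)$; once $\mathrm{Re}(n_0)$ exceeds this explicit threshold, the index lies outside $\mathbb{Z}_{<0}$, so generator~(1) of $J$ kills the term. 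The correction terms, as the procedure iterates, grow neighboring factors $c_i(p'_i)$ with very positive index (hence very negative operator weight), which drives the associated monomial into arbitrarily deep $\mathrm{I}_n$ of the componentwise topology; by componentwise completeness the correction series converge to zero in $\hat A$. Thus $a(n_0)v$ lies in $\hat J$ and vanishes in $F$, and the claim extends from monomial representatives to arbitrary $\tilde b\in\hat A$ by the continuity of left multiplication.

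I expect the main obstacle to be making the iterated reduction in (iii) rigorous inside the completion: the correction-term series must genuinely converge in the componentwise topology of $\hat A(e(a)+\beta,\cdot)$, and the interplay between moving $a$ right and accumulating corrections needs a double induction on the position and index of $a$, much in the spirit of the normal-form reduction behind Corollary~\ref{sec:span} but carried out without yet having the full GVA structure on $F$.
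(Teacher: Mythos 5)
Your reduction to properties (i)--(iii) is exactly right, and your treatment of (i) and (ii) matches the paper: (ii) is precisely the role of generator (2) of the ideal $J$, and the paper dispatches it in one sentence. The divergence is in (iii), and the paper does something much simpler than pushing $a(n_0)$ through the monomial. It first proves Lemma \ref{sec:lembounded}: for each charge $\alpha$ the component $F(\alpha)=\bigoplus_d F(\alpha,d)$ is bounded below in weight, because modulo $\hat I+\hat J$ everything of charge $\alpha$ is controlled by the finitely many ``extremal'' monomials $a_1(\mathcal N(a_1,\sum_{j\ge 2}a_j)-1)\cdots a_m(\mathcal N(a_m,0)-1)$ with $\sum_i a_i=\alpha$. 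Given that lemma, (iii) is a one-line weight count: $a(n)$ sends $F(\beta,e)$ into the single graded component $F(e(a)+\beta,\,e-n-1)$, which vanishes once $\mathrm{Re}(e-n-1)$ falls below the lower bound. All the combinatorial work is isolated in a lemma that is then reused (for instance to show $F$ is generated by the modes $a(n)$).

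Your direct route has a concrete gap in the correction terms. An application of Lemma \ref{sec:lemlocal3} produces corrections $a(s-1-j)c_i(t+1+j)(\cdots)$ with $j\ge 0$, lowering the index of $a$ by $j+1\ge 1$; a branch of the recursion that passes through several such corrections can deposit $a$ on the vacuum with an index that \emph{is} a negative integer, no matter how large $\mathrm{Re}(n_0)$ is, so generator (1) of $J$ does not kill those terms. They must instead be shown to lie arbitrarily deep in the filtration, which means exhibiting a \emph{suffix} of each such monomial of arbitrarily negative weight (recall $\mathrm I_n$ is built from $A\cdot \mathrm F_{-n-1}A$), uniformly across all generations of the recursion and with a threshold for $\mathrm{Re}(n_0)$ independent of the depth $n$. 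Relatedly, ``the correction series converge to zero'' is not the right assertion: in a linear topology a series whose terms tend to $0$ converges, but not to $0$; what is needed is that the partial remainders tend to $0$, so that $a(n_0)v$ lands in the componentwise closure $\hat I+\hat J$. None of this is unfixable --- it is essentially the straightening argument that underlies Lemma \ref{sec:lembounded} --- but as written the deferred ``double induction'' is where the entire proof lives, and the paper's reorganization by weight avoids it.
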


To prove the proposition, we need a lemma.
Let $\alpha$ be an element of $Q_B$.
Set $F(\alpha)=\bigoplus_{d\in \mathbb{C}}F(\alpha,d)$.
\begin{lem}\label{sec:lembounded}
The subspace $F(\alpha)$ is bounded below, that is, there exists $N\in\mathbb{R}$ such that
\[
F(\alpha)=\bigoplus_{d\in \mathbb{C},\mathrm{Re}(d)\geq N} F(\alpha,d).
\]
\end{lem}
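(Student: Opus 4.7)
The plan is to induct on the height $k=\mathrm{ht}(\alpha)$. The base case $k=0$ forces $\alpha=0$; since $Q_B$ is freely generated by $B$, the only monomial in $A(0)$ is the vacuum, so $\hat{A}(0)=\mathbf{k}|0\rangle$ sits at weight $0$, and hence $F(0)$ is bounded below by $0$.

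For the inductive step, I will assume the lemma for every $\beta\in(Q_B)_+$ with $\mathrm{ht}(\beta)<k$, call the bound $N_\beta\in\mathbb{R}$, and take $\mathrm{ht}(\alpha)=k$. Only finitely many $a\in B$ satisfy $\alpha-e(a)\in(Q_B)_+$, and every monomial of $A(\alpha)$ factors as $a(n)\cdot w$ for such an $a$, some $n\in\mathbb{C}$, and a monomial $w\in A(\alpha-e(a))$. The crucial sub-claim is that $a(n)\,w=0$ in $F$ whenever $\mathrm{Re}(n)>C_a:=\mathrm{Re}\,\mathcal{N}(a,\alpha-e(a))-1$. I prove it by reducing $w$ modulo $\hat{J}$ to a monomial $a_2(n_2)\cdots a_k(n_k)|0\rangle$ and commuting $a(n)$ past each $a_i$ using the locality relations $l(a,a_i,\cdot,\cdot)\in\hat{I}$: each swap rewrites $a(s)\,a_i(t)$ as a sum over $j\in\mathbb{Z}_+$ involving $a_i(t+N_i-j)\,a(s-N_i+j)$, with $N_i=\mathcal{N}(a,a_i)$. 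After $k-1$ such moves, $a$ sits rightmost next to $|0\rangle$ with final mode of the form $n-\sum_iN_i+\sum_ij_i$ for $j_i\in\mathbb{Z}_+$; the type-(1) generators of $\hat{J}$ annihilate every such term whose mode lies outside $\mathbb{Z}_{<0}$, forcing $\mathrm{Re}(n)\le\mathrm{Re}(\sum_iN_i)-1=C_a$. Granting the sub-claim, $\mathrm{Re}(\delta(a(n)w))\ge N_{\alpha-e(a)}-C_a-1$, and minimizing over the finitely many relevant $a$ gives the required lower bound $N_\alpha$.

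The main obstacle is that each locality swap is a \emph{formally infinite} sum in $\hat{A}$, so ``moving $a(n)$ to the right'' is not a finite procedure in $A$. The natural tool, Lemma~\ref{sec:lemlocal4}, would collapse each sum to a finite one, but only once $a_i(z)$ is known to be a field --- precisely Proposition~\ref{sec:propazfield}, the statement we are building towards. The way around this circularity is to interleave the reduction with the induction hypothesis: once $F(\alpha-e(a))$ is known to be bounded below, only finitely many terms of each swap-expansion can contribute to any given weight-graded piece of $F(\alpha)$, so the formally infinite sums in $\hat{A}$ collapse to honestly finite identities in $F(\alpha,d)$. A secondary technical point is to check that $\hat{I}+\hat{J}$ is componentwise-closed in $\hat{A}$ so that density of $A(\alpha,d)$ in $\hat{A}(\alpha,d)$ lets the vanishing of monomials transfer to the completion; this is where the topological machinery of Section~2 is used.
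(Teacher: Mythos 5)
Your inductive framework reproduces the paper's numerical bound (the recursion $N_\alpha=\min_a\bigl(N_{\alpha-e(a)}-\mathrm{Re}\,\mathcal{N}(a,\alpha-e(a))\bigr)$ unwinds to exactly the minimum over the paper's finite set $S$ of extremal monomials), but the sub-claim on which you rest the inductive step is false. Take $B=\{b\}$ with $\mathcal{N}(b,b)=-2$ and $\alpha=2e(b)$, so $C_b=-3$. Let $w=b(-3)|0\rangle$, a nonzero element of $F(e(b))$. Then $b(-1)w$ has leftmost mode $-1$ with $\mathrm{Re}(-1)=-1>C_b$, yet under the isomorphism $F\cong W_{\mathbb{Z}b}$ of Theorem \ref{sec:thmisomorphic} it maps to $e^{b}(-1)e^{b}(-3)|0\rangle=e^{2b}\neq 0$. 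The truncation order of $a(n)$ on a vector depends on the \emph{weight} of that vector and not only on its charge --- this is visible in the paper's own proof of Proposition \ref{sec:propazfield}, where the vanishing threshold is $\mathrm{Re}(n)>N+1-e$ with $e$ the weight of $v$ --- so a charge-uniform bound of the kind you assert cannot hold. The proximate source of the error is your description of the locality swap: relation (\ref{eqn:local3}) is not simply $a(s)a_i(t)=\sum_j(\cdots)\,a_i(t+N_i-j)a(s-N_i+j)$; it carries a second sum $\sum_j(-1)^{j}\binom{N_i}{j+1}a(s-1-j)a_i(t+1+j)$ in which $a$ stays on the left with a lowered mode while $a_i$'s mode is raised. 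In the example above it is precisely this discarded sum that produces the surviving term $b(-3)b(-1)|0\rangle$.

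What is true, and what the paper actually uses, is weaker: after the full two-sided reduction to the standard monomials $\Psi(a_1,m_1;\cdots;a_k,m_k)$ with all $m_i\le -1$ (the terminating rewriting procedure of Section \ref{sec:secspan}, carried out in $\hat A/(\hat I+\hat J)$, where the infinite sums converge in the componentwise topology), every element of $F(\alpha,d)$ is a combination of monomials whose weights have real part at least $M=\min\{\mathrm{Re}(\delta(s))\mid s\in S\}$; since the reduction preserves $\delta$, this forces $F(\alpha,d)=0$ for $\mathrm{Re}(d)<M$. The leftmost mode of a \emph{standard} monomial does satisfy your inequality $\mathrm{Re}(n)\le C_a$, but an arbitrary product $a(n)w$ with $\mathrm{Re}(n)>C_a$ need not vanish --- it merely rewrites into standard monomials headed by a different factor. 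If you wish to keep the induction on height, the sub-claim must be replaced by the statement that $a(n)w$ reduces to standard monomials of the same weight, followed by the weight bound on those; at that point you are reproving the spanning argument, and the induction buys nothing over the paper's direct choice of $S$. Your remarks on the convergence of the infinite sums and on the componentwise closure of $\hat I+\hat J$ are sensible, but they do not repair the false vanishing statement.
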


\begin{proof}
We show that there exists $N\in\mathbb{R}$ such that for any $d\in\mathbb{C}$ with $\mathrm{Re}(d)\leq N$,
we have $\hat{A}(\alpha,d)\subset \hat{I}(\alpha,d)+\hat{J}(\alpha,d)$.
Put $m=\mathrm{ht}(\alpha)$.
Let $S$ be the set consisting of the vectors of the form 
\[
a_1(\mathcal{N}(a_1,{\textstyle \sum_{j=2}^m} a_j )-1)\cdots a_i(\mathcal{N}(a_i,{\textstyle \sum_{j=i+1}^m} a_j)-1)\cdots  a_m(\mathcal{N}(a_m,0)-1)
\]
 with $a_1,\ldots,a_m\in B$ with $a_1+\cdots+a_m=\alpha$.
This is a finite set.
Set $M=\min \{\mathrm{Re}(\delta(s))|s\in S\}$.
Then, for $d\in\mathbb{C}$ with $\mathrm{Re}(d)<M$, we have $\hat{A}(\alpha,d)\subset \hat{I}(\alpha,d)+\hat{J}(\alpha,d)$,
which completes the proof.
\end{proof}

Now we show that $a(z)$ is a field.

\begin{proof}[Proof of Proposition \ref{sec:propazfield}]
Let $\alpha$ be an element of $Q_B$ and $n$ an element of $\mathbb{C}\setminus (\mathcal{N}(e(a),\alpha)+\mathbb{Z})$.
Then, for any homogeneous $v$ with charge $\alpha$, we have $a(n) v=0$.
Let $(\beta,e)$ be an element of $Q_B\times \mathbb{C}$.
Let $v$ be an element of $F(\beta,e)$.
We show that $a(n) v=0$ for $n\in \mathcal{N}(e(a),\beta)+\mathbb{Z}$ with $\mathrm{Re}(n)\gg 0$.
By Lemma \ref{sec:lembounded}, the subspace $F(e(a)+\beta)$ has the form
 $\bigoplus_{d\in\mathbb{C}, \mathrm{Re}(d)\geq N}F(e(a)+\beta,d)$
with $N\in \mathbb{Z}$.
Since $a(n) v\in F(e(a)+\beta,-n-1+e)$,
 we have $a(n) v=0$ for $n$ with $\mathrm{Re}(n)>N+1-e$.
Thus, $a(z)$ is a field on $F$.
\end{proof}

We show that $F$ is algebraically-generated by $\{a(n)|a\in B, n\in \mathbb{C}\}$.

\begin{prop}
For any $v\in F(\alpha,d)$, there exists $u\in A(\alpha,d)$ such that
$v=\prod_n (u+\mathrm{I}_n(A(\alpha,d))+\hat{I}(\alpha,d)+\hat{J}(\alpha,d)$.
\end{prop}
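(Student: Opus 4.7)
The plan is: given $v \in F(\alpha,d)$, lift it to $\hat v \in \hat A(\alpha,d)$ and write $\hat v = \prod_n (v_n + \mathrm{I}_n(A(\alpha,d)))$ with $v_n \in A(\alpha,d)$; I will take $u := v_N$ for $N$ sufficiently large and show $\iota(v_N)$ represents $v$ modulo $\hat I(\alpha,d)+\hat J(\alpha,d)$. The pivotal claim is:
\[
\text{there exists } N \in \mathbb{Z}_+ \text{ with } \mathrm{I}_N(A(\alpha,d)) \subset \hat I(\alpha,d) + \hat J(\alpha,d).
\]

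To prove the claim, use that $A$ is $(Q_B)_+$-graded and $\alpha$ has finite height, so only finitely many $\beta \in (Q_B)_+$ satisfy $\beta \leq \alpha$. For each such $\beta$, the argument inside Lemma \ref{sec:lembounded} furnishes a threshold $M_\beta \in \mathbb{R}$ with $\hat A(\beta,e) \subset \hat I(\beta,e) + \hat J(\beta,e)$ whenever $\mathrm{Re}(e) < M_\beta$. Choose $N$ so that $-N-1 < M_\beta$ for every such $\beta$. Any $w \in \mathrm{I}_N(A(\alpha,d)) = A(\alpha,d) \cap A\cdot\mathrm{F}_{-N-1}A$ is a finite sum $\sum_k x_k y_k$ with $y_k \in \mathrm{F}_{-N-1}A$. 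Decomposing each $y_k$ into $(Q_B \times \mathbb{C})$-homogeneous pieces, every piece lies in some $\hat A(\beta,e)$ with $\beta \leq \alpha$ and $\mathrm{Re}(e) \leq -N-1 < M_\beta$, hence in $\hat I + \hat J$. Since $\hat I$ is a two-sided and $\hat J$ a left ideal of $\hat A$ (each being the componentwise closure of an ideal, combined with continuity of multiplication), every $x_k y_k$, and thus $w$, lies in $\hat I(\alpha,d) + \hat J(\alpha,d)$.

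With the claim in hand, $\hat v - \iota(v_N)$ is the limit in $\hat A(\alpha,d)$ of the Cauchy sequence $(v_n - v_N)_{n \geq N}$ whose entries all lie in $\mathrm{I}_N \subset \hat I(\alpha,d) + \hat J(\alpha,d)$. The main obstacle is to confirm that this limit remains in $\hat I(\alpha,d) + \hat J(\alpha,d)$: in general, a sum of two closed subspaces of $\hat A(\alpha,d)$ need not be closed. I expect to resolve this by sharpening the key claim to the closure statement $\overline{\mathrm{I}_N(A(\alpha,d))} \subset \hat I(\alpha,d) + \hat J(\alpha,d)$, unpacking the proof of Lemma \ref{sec:lembounded} to obtain, for each deep-weight homogeneous piece $y_k$, a decomposition into $\hat I$- and $\hat J$-parts that is compatible with the filtration; this yields simultaneously convergent sequences $i_n \to \tilde i \in \hat I(\alpha,d)$ and $j_n \to \tilde j \in \hat J(\alpha,d)$ with $v_n - v_N = i_n + j_n$, and therefore $\hat v - \iota(v_N) = \tilde i + \tilde j \in \hat I(\alpha,d) + \hat J(\alpha,d)$. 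Consequently the image of $u := v_N$ in $F(\alpha,d)$ coincides with $v$, completing the proof.
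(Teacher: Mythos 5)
Your main line reproduces the paper's proof almost exactly: the paper also lifts $v$ to $w=\prod_n(w_n+\mathrm{I}_n(A(\alpha,d)))$, extracts the threshold $N$ from the finite set of extremal normal-ordered monomials $a_1(\mathcal{N}(a_1,\sum_{j\geq 2}a_j)-1)\cdots a_l(\mathcal{N}(a_l,0)-1)$ taken over all $l\leq\mathrm{ht}(\alpha)$ and all $a_1,\ldots,a_l$ with $\alpha-a_1-\cdots-a_l\in(Q_B)_+$ (this is precisely your family of thresholds $M_\beta$ over the finitely many charges $\beta$ with $\alpha-\beta\in(Q_B)_+$), deduces $\mathrm{I}_n(A(\alpha,d))\subset\hat{I}(\alpha,d)+\hat{J}(\alpha,d)$ for $n\geq N$, and takes $u=w_N$. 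So the key claim and its proof are the same as the paper's.

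The one point where you go beyond the paper is the closure question at the end, and you are right that it is not automatic: $w-\iota(w_N)$ lies only in the closure of $\mathrm{I}_N(A(\alpha,d))$ in $\hat{A}(\alpha,d)$, while $\hat{I}(\alpha,d)+\hat{J}(\alpha,d)$ is a sum of two closed subspaces and such sums need not be closed. The paper's proof passes over this step with a bare ``Therefore,'' so you have identified a point that the published argument also leaves implicit. However, your resolution is a plan rather than a proof: the sentence beginning ``I expect to resolve this by sharpening the key claim'' does not yet exhibit the filtration-compatible decomposition $v_n-v_N=i_n+j_n$ with $(i_n)$ and $(j_n)$ separately Cauchy. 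To close it, you would need to make the containment $\hat{A}(\beta,e)\subset\hat{I}(\beta,e)+\hat{J}(\beta,e)$ from Lemma \ref{sec:lembounded} effective --- e.g.\ by running the straightening of Section \ref{sec:secspan} inside $\hat{A}$, writing a monomial with a deep tail as a convergent series of normal-ordered monomials (which land in $J$ by weight reasons) plus locality relations (which land in $I$), and checking that the partial rewritings of an element of $\mathrm{I}_n$ stay in small neighborhoods as $n$ grows; the fact that $J(\alpha,d)$ is spanned by a subset of the monomial basis of the free algebra $A(\alpha,d)$ is helpful for making the $\hat{J}$-projection continuous. As submitted, the proposal matches the paper up to and including this unproved step, and is incomplete exactly where the paper is silent.
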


\begin{proof}
Let $v$ be an element of $F(\alpha,d)$.
There exists $w\in \hat{A}(\alpha,d)$ such that $v=w+\hat{I}(\alpha,d)+\hat{J}(\alpha,d)$.
The vector $w$ has the form $w=\prod_n (w_n+I_n(A(\alpha,d)))$ with $w_n\in A(\alpha,d)$ and
for $n_1\leq n_2$, $w_{n_1}+I_{n_1}(A(\alpha,d))=w_{n_2}+I_{n_1}(A(\alpha,d))$.
Put $m=\mathrm{ht}(\alpha)$.
Let $S$ be the set consisting of the vectors of the form 
\[
{\textstyle 
a_1(\mathcal{N}(a_1,\sum_{j=2}^l a_j )-1)\cdots a_i(\mathcal{N}(a_i,\sum_{j=i+1}^l a_j)-1)\cdots a_l(\mathcal{N}(a_l,0)-1)
}
\]
 with $l=1,\ldots,m$, $a_1,\ldots,a_l\in B$ with $\alpha-a_1-\cdots-a_l\in (Q_0)_+$.
This is a finite set.
Set $M=\min \{\mathrm{wt}(s)|s\in S\}$.
Set $N=\max\{-\lfloor M \rfloor,0\}$.
Then for any $n\geq N$, we have $\mathrm{I}_n(A(\alpha,d))\subset \hat{I}(\alpha,d)+ \hat{J}(\alpha,d)$.
Therefore $w=\prod_n (w_N+\mathrm{I}_n(A(\alpha,d)))+\hat{I}(\alpha,d)+\hat{J}(\alpha,d)$.
Thus, we have the proposition.
\end{proof}

Therefore, the space $F$ is generated by $a(z)$ ($a\in B$).

Consider the linear map $T$ on $F$.
The field $a(z)$ is translation covariant with respect to $T$.
By Theorem \ref{sec:thmbk} (Reconstrunction Theorem), we obtain the generalized vertex algebra $(F(B,\mathcal{N},\eta),Y,|0\rangle,T)$ with a charge factor $(Q_B,\Delta,\eta)$, where
$\Delta(\alpha,\beta)=\mathcal{N}(\alpha,\beta)+\mathbb{Z}$.

By the construction, we have the following universality of $F$.

\begin{thm}\label{sec:thmuniv}
Let $V'$ be a GVA with a charge factor $(Q',\Delta',\eta')$ on $V'$.
Suppose that there exists a group homomorphism $\rho:Q_B\rightarrow Q'$ such that $\Delta(\alpha,\beta)=\Delta'(\rho(\alpha),\rho(\beta))$ and $\eta(\alpha,\beta)=\eta'(\rho(\alpha),\rho(\beta))$ ($\alpha,\beta\in Q_B$).
Suppose also that there exists a map $i:B\rightarrow V'$ such that 
 each vector $i(b)$ is zero or a homogeneous vector of charge $\rho(b)$ ($b\in B$),
that $i(B)$ generates the GVA $V'$
and that the map  $\mathcal{N}':i(B)\times i(B)\rightarrow \mathbb{C}$ with $\mathcal{N}'(i(b),i(c))=\mathcal{N}(b,c)$ is well-defined and is a locality bound on $i(B)$.
Then, there is the unique surjective GVA-homomorphism $\pi:F\rightarrow V'$ such that $\pi|_B=i$.
\end{thm}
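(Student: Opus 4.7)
The strategy is to construct $\pi$ by mirroring the construction of $F$ itself: define a linear map from the free algebra $A$ to $V'$, extend it to the componentwise completion $\hat{A}$, and show it descends through the quotient by $\hat{I}+\hat{J}$. Since $A=\mathbf{k}[X]$ is the free associative algebra on $X=B\times\mathbb{C}$, there is a unique linear map $\tilde{\pi}\colon A\to V'$ specified on monomials by
$$\tilde{\pi}(a_1(n_1)\cdots a_m(n_m)) := i(a_1)(n_1)\cdots i(a_m)(n_m)\,|0\rangle'.$$
This map is $Q_B$-graded via $\rho$ (sending $A(\alpha,d)$ into $(V')^{\rho(\alpha)}$) and intertwines $T$ with $T'$, because the induced commutation $[T',Y'(i(a),z)]=\partial_z Y'(i(a),z)$ of $V'$ is precisely the relation defining the $T$-action on $A$.

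Next I extend $\tilde{\pi}$ componentwise to $\hat{A}$. The key observation is that for each $(\alpha,d)$, one has $\tilde{\pi}(\mathrm{I}_n(A(\alpha,d)))=0$ for $n$ sufficiently large. The reasoning is analogous to Lemma \ref{sec:lembounded}: monomials $uw\in A\cdot \mathrm{F}_{-n-1}A\cap A(\alpha,d)$ have a right factor $w$ whose length is bounded by $\mathrm{ht}(\alpha)$ and whose total $\delta$-weight is at most $-n-1$. Writing $w$ as a sum of monomials and computing $\tilde{\pi}(w)$ as $i(b_1)(m_1)\cdots i(b_l)(m_l)|0\rangle'$, the field axiom for $Y'(i(b_l),z)$ combined with the vacuum axiom forces $m_l\in\mathbb{Z}_{<0}$, and inductively each $i(b_i)(m_i)$ acts nontrivially only for $\mathrm{Re}(m_i)$ bounded above by a constant depending on the finitely many admissible charge configurations. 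Large $n$ forces $\sum\mathrm{Re}(m_i)\geq n+1-l$ to exceed this bound, so $\tilde{\pi}(w)=0$ and hence $\tilde{\pi}(uw)=0$. Equipping each $(V')^{\rho(\alpha)}$ with the discrete topology then makes the extension $\tilde{\pi}\colon\hat{A}\to V'$ automatic.

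To quotient out $\hat{I}+\hat{J}$, I check vanishing on the generators. For each locality generator $l(a,b,s,t,N,\eta)\in I$, the charge-compatibility hypotheses on $\rho$ guarantee that $Y'(i(a),z)$ and $Y'(i(b),w)$ are $\eta(a,b)$-local with bound $\mathcal{N}(a,b)$, so the operator identity (\ref{eqn:local2}) holds identically on $V'$; applying it to $|0\rangle'$ yields $\tilde{\pi}(l)=0$. Using the identity $\tilde{\pi}(uv)=(\text{modes of }u)\,\tilde{\pi}(v)$, this vanishing propagates to the two-sided ideal $I$, and the continuity argument above extends it to $\hat{I}$. For generators of $J$: the field and vacuum axioms give $i(a)(n)|0\rangle'=0$ for $n\not\in\mathbb{Z}_{<0}$ (since $Y'(i(a),z)|0\rangle'\in V'[[z]]z^{-\Delta'(\rho(a),0)}=V'[[z]]$), and for longer generators $a_1(n_1)\cdots a_m(n_m)$ the field axiom forces $i(a_1)(n_1)$ to annihilate any vector of charge $\rho(a_2)+\cdots+\rho(a_m)$ unless $n_1$ lies in the appropriate coset of $\mathbb{C}/\mathbb{Z}$.

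This yields a linear map $\pi\colon F\to V'$ with $\pi|_B=i$, sending $|0\rangle\mapsto|0\rangle'$, intertwining translation operators, and satisfying $\pi(Y(a,z)b)=Y'(\pi(a),z)\pi(b)$ for $a\in B$. Since $F$ is generated as a GVA by $B$ and $V'$ is generated by $i(B)$, the Reconstruction Theorem (Theorem \ref{sec:thmbk}) upgrades $\pi$ to a full GVA homomorphism with witnessing group homomorphism $\rho$. Surjectivity is immediate from the generation of $V'$ by $i(B)$, and uniqueness follows since any GVA homomorphism extending $i$ is determined on the generating set $B$. The main obstacle is the careful convergence bookkeeping needed to extend $\tilde{\pi}$ to $\hat{A}$ and to show vanishing on the \emph{componentwise closures} $\hat{I}$ and $\hat{J}$ rather than merely on $I$ and $J$; this is the one place where the graded-topological structure of $\hat{A}$ and the field axiom on $V'$ must interact delicately.
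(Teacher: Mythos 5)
The paper offers no written proof of Theorem \ref{sec:thmuniv} beyond the remark ``by the construction,'' and your proposal fills in exactly the intended route: push the assignment $a(n)\mapsto i(a)(n)$ through $A$, then through $\hat{A}$, then through the quotient by $\hat{I}+\hat{J}$. The overall architecture is right, and the treatment of $\hat I$, $\hat J$, surjectivity and uniqueness is fine.

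There is, however, one step whose stated justification does not hold, and it is precisely the step you flag as delicate. You claim that the field axiom forces each individual mode $i(b_j)(m_j)$ to act nontrivially only for $\mathrm{Re}(m_j)$ bounded above by a constant depending on the charge configuration. That is false: the field axiom bounds $\mathrm{Re}(m_j)$ in terms of the \emph{vector} being acted on, not its charge, and already in a lattice GVA one has $e^{\beta}(k)\,e^{\beta}(m)|0\rangle\neq 0$ with $k$ arbitrarily large positive once $m$ is sufficiently negative. So no uniform per-mode bound exists. What is true, and what your conclusion actually needs, is that the \emph{sum} $\sum_j \mathrm{Re}(m_j)$ is bounded above for nonzero monomials of a fixed charge $\beta\in (Q_B)_+$ --- equivalently, the span of such monomials in $V'$ is weight-bounded below. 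This does not follow from the field axiom alone; it follows from locality, via the spanning theorem of Section \ref{sec:secspan} applied to the GVA $V'$ with generating set $i(B)$ and locality bound $\mathcal{N}'$ (the statement $V(a_1,\ldots,a_k;M)=0$ for $M>-k$, together with Lemma \ref{sec:lemlocal4} to rewrite an arbitrary monomial in the ordered form $\Psi$ without changing the total degree). With that substitution --- cite Corollary \ref{sec:span} for $V'$ rather than the field axiom --- the vanishing of $\tilde{\pi}$ on $\mathrm{I}_n(A(\alpha,d))$ for large $n$, and hence the extension to $\hat A$ and the descent through the componentwise closures, goes through as you describe. A small further remark: your vanishing argument for the length-$\geq 2$ generators of $J$ uses the coset $\Delta(a_1,a_2+\cdots+a_m)$, which is what the field axiom actually provides; the paper's condition $n_1\notin\mathcal{N}(a_1,\sum_{i=1}^m a_i)+\mathbb{Z}$ should be read with the sum starting at $i=2$ for the two to match.
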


We call $F$ a {\it free} generalized vertex algebra.
This is a generalization of the free vertex (super)algebras \cite{R}.

\subsection{Universality of the free generalized vertex algebras}

Let $V$ be a generalized vertex algebra with a charge factor $(Q,\Delta,\eta)$.
Let us denote the charge of a $Q$-homogeneous element $v$ by $\gamma(v)$.
Let $B$ be a totally ordered set with a map $i:B\rightarrow V$.
Suppose that any element of $i(B)$ is a $Q$-homogeneous vector.
Suppose that the set $i(B)$ generates $V$.
Let $Q_B=\bigoplus_{v\in B}\mathbb{Z}e(v)$ be a free abelian group with a $\mathbb{Z}$-basis $\{e(v)\}_{v\in B}$ indexed by $B$.
Let $\mathcal{N}:B\times B\rightarrow \mathbb{C}$ be a locality bound on $(B,i)$.

Now we define another function $\mathcal{N}':B\times B\rightarrow \mathbb{C}$.
For $a,b\in B$ with $a\neq b$, set $\mathcal{N}'(a,b)=\mathcal{N}(a,b)$.
Let $a$ be an element of $B$.
We denote $i(a)$ and $\gamma(i(a))$ by $a$.
If $a(\mathcal{N}(a,a)-1)a= 0$ and $\eta(a,a)\neq e^{-\pi i \mathcal{N}(a,a)}$, then set $\mathcal{N}'(a,a)=\mathcal{N}(a,a)-1$;
otherwise, set $\mathcal{N}'(a,a)=\mathcal{N}(a,a)$.
Then, $\mathcal{N}'$ is again a locality bound on $(B,i)$.

\begin{lem} For any $a\in B$,
\[
\eta(a,a)=e^{-\pi i \mathcal{N}'(a,a)}.
\]
\end{lem}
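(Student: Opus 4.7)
The plan is to use the skew-symmetry that is forced by the locality axiom on the pair $(a,a)$, together with the constraint $\eta(a,a)\eta(a,a)=e^{-2\pi i\Delta(a,a)}$, to reduce the proof to two cases.

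First I would observe that since $\eta$ is bimultiplicative with $\eta(\alpha,\beta)\eta(\beta,\alpha)=e^{-2\pi i\Delta(\alpha,\beta)}$ and $\Delta(a,a)=\mathcal{N}(a,a)+\mathbb{Z}$, we have $\eta(a,a)^2=e^{-2\pi i\mathcal{N}(a,a)}$, so $\eta(a,a)\in\{\pm e^{-\pi i\mathcal{N}(a,a)}\}$. In the plus case the definition of $\mathcal{N}'$ leaves $\mathcal{N}'(a,a)=\mathcal{N}(a,a)$, and the identity $\eta(a,a)=e^{-\pi i\mathcal{N}'(a,a)}$ is immediate. So the whole content is the minus case: if $\eta(a,a)=-e^{-\pi i\mathcal{N}(a,a)}=e^{-\pi i(\mathcal{N}(a,a)-1)}$, I must verify that the condition $a(\mathcal{N}(a,a)-1)a=0$ actually holds, because only then does the definition give $\mathcal{N}'(a,a)=\mathcal{N}(a,a)-1$ and hence $\eta(a,a)=e^{-\pi i\mathcal{N}'(a,a)}$.

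Thus the core step is to show, under the mere hypothesis $\eta(a,a)\neq e^{-\pi i\mathcal{N}(a,a)}$, that $a(\mathcal{N}(a,a)-1)a=0$. For this I would apply the $\eta(a,a)$-locality (\ref{eqn:local1}) with $N=\mathcal{N}(a,a)$ to the vacuum vector $|0\rangle$, and use the translation-covariance identity $Y(a,z)e^{wT}=e^{wT}Y(a,z-w)$ together with $Y(a,\zeta)|0\rangle=e^{\zeta T}a$ to rewrite both sides as formal expansions of $e^{wT}Y(a,z-w)a$ and $e^{zT}Y(a,w-z)a$ multiplied by $\iota_{z,w}(z-w)^N$ and $\iota_{w,z}(z-w)^N$ respectively. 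Writing $Y(a,\zeta)a=\sum_{k\geq 0}c_k\,\zeta^{k-\mathcal{N}(a,a)}$ with $c_k=a(\mathcal{N}(a,a)-k-1)a$, the powers of $(z-w)$ cancel the $\zeta^{-\mathcal{N}(a,a)}$ part, and after using $\iota_{w,z}(z-w)^N=e^{\pi iN}\iota_{w,z}(w-z)^N$ both sides become genuine elements of $V[[z,w]]$. Concretely I expect the identity
\begin{equation*}
e^{wT}\sum_{k\geq 0}c_k(z-w)^k=\eta(a,a)\,e^{\pi i\mathcal{N}(a,a)}\,e^{zT}\sum_{k\geq 0}c_k(-1)^k(z-w)^k.
\end{equation*}
Extracting the $(z-w)^0$ coefficient (for instance by setting $z=w$) gives $c_0=\eta(a,a)e^{\pi i\mathcal{N}(a,a)}c_0$. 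Hence if $\eta(a,a)e^{\pi i\mathcal{N}(a,a)}\neq 1$, that is, if $\eta(a,a)\neq e^{-\pi i\mathcal{N}(a,a)}$, then $c_0=a(\mathcal{N}(a,a)-1)a=0$, which is exactly what we need.

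The main technical obstacle is the careful bookkeeping with the expansions $\iota_{z,w}$ versus $\iota_{w,z}$ for non-integer powers, in particular the appearance of $e^{\pi iN}$ when converting $(z-w)^N$ into $(w-z)^N$, and making sure that $\iota_{w,z}(z-w)^N\cdot\iota_{w,z}(w-z)^{k-\mathcal{N}(a,a)}$ really collapses to a polynomial in $(z-w)$ so that the evaluation at $z=w$ (equivalently, taking the $(z-w)^0$ coefficient) is legitimate. Once this is handled, the two cases above combine to give $\eta(a,a)=e^{-\pi i\mathcal{N}'(a,a)}$ for every $a\in B$.
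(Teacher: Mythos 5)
Your proposal is correct and follows essentially the same route as the paper: the case $\eta(a,a)=e^{-\pi i\mathcal{N}(a,a)}$ is immediate from the definition of $\mathcal{N}'$, and the substance is that applying $\eta(a,a)$-locality with $N=\mathcal{N}(a,a)$ to the vacuum yields $(1-\eta(a,a)e^{\pi i\mathcal{N}(a,a)})\,a(\mathcal{N}(a,a)-1)a=0$, which is exactly the identity the paper extracts (directly from the component form (\ref{eqn:local2}) with $s=t=-1$, rather than via your generating-function manipulation). The only cosmetic difference is that you split on the sign of $\eta(a,a)$ while the paper splits on whether $a(\mathcal{N}(a,a)-1)a$ vanishes; the content is identical.
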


\begin{proof}
Let $a$ be an element of $B$.
Since $\eta(\alpha,\beta)\eta(\beta,\alpha)=e^{-2\pi i \Delta(\alpha,\beta)}$,
we have $\eta(a,a)=\pm e^{-\pi i \mathcal{N}(a,a)}$.
Suppose $a(\mathcal{N}(a,a)-1)a=0$ and $\eta(a,a)\neq e^{-\pi i \mathcal{N}(a,a)}$.
Then, $\eta(a,a)= -e^{-\pi i \mathcal{N}(a,a)}$.
Therefore, $\eta(a,a)= e^{-\pi i (\mathcal{N}(a,a)-1)}=e^{-\pi i \mathcal{N}'(a,a)}$.
Suppose $a(\mathcal{N}(a,a)-1)a\neq 0$.
Then, $\mathcal{N}'(a,a)=\mathcal{N}(a,a)$.
By eq.\ (\ref{eqn:local2}), we have
\begin{eqnarray*}
&&\sum_{j\in \mathbb{Z}_+}(-1)^j \begin{pmatrix}\mathcal{N}'(a,a) \\ j \end{pmatrix} \biggl(a(-1+\mathcal{N}'(a,a)-j)a(-1+j) \\ 
&&\quad-\eta(a,a)e^{\pi i \mathcal{N}'(a,a)} a(-1+\mathcal{N}'(a,a)-j) a(-1+j)\biggr)|0\rangle=0.
\end{eqnarray*}
Since $a(-1)|0\rangle=a$ and $a(-1+j)|0\rangle=0$ for $j>0$, we have $(1-\eta(a,a)e^{\pi i \mathcal{N}'(a,a)})a(-1+\mathcal{N}'(a,a))a=0$.
By the assumption, we have $1-\eta(a,a)e^{\pi i \mathcal{N}'(a,a)}=0$, which completes the proof.
\end{proof}

\begin{cor}[Corollary of Theorem \ref{sec:thmuniv}]\label{sec:propobject}
There exist a bimultiplicative
function $\eta':Q_B\times Q_B\rightarrow \mathbb{C}^\times$ with $\eta'(\alpha,\alpha)=e^{-\pi i \mathcal{N}'(\alpha,\alpha)}$
 such that there exists a surjective GVA homomorphism $\pi:F(B,\mathcal{N}',\eta')\rightarrow V$ with
$\pi|_B=i$.
\end{cor}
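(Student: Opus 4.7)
The plan is to produce $\eta'$ by pulling back the locality factor of $V$ along the natural group homomorphism $\rho: Q_B \to Q$ sending $e(a)$ to $\gamma(i(a))$ for each $a\in B$, and then apply Theorem \ref{sec:thmuniv}. Concretely, define
\[
\eta'(\alpha,\beta) := \eta(\rho(\alpha),\rho(\beta)), \quad \alpha,\beta\in Q_B.
\]
Then $\eta'$ is automatically bimultiplicative as a composition of bimultiplicative data with a group homomorphism. Because $\mathcal{N}'$ is a locality bound on $(B,i)$ (as noted just before the Lemma), we have $\mathcal{N}'(a,b)\in\Delta(\rho(a),\rho(b))$ for all $a,b\in B$; bilinearity of $\mathcal{N}'$ together with $\mathbb{Z}$-bilinearity of $\Delta$ modulo $\mathbb{Z}$ extend this to $\mathcal{N}'(\alpha,\beta)+\mathbb{Z}=\Delta(\rho(\alpha),\rho(\beta))$ for all $\alpha,\beta\in Q_B$.

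The main verification is the diagonal normalization $\eta'(\alpha,\alpha)=e^{-\pi i \mathcal{N}'(\alpha,\alpha)}$ demanded by the definition of the free GVA in Section \ref{sec:freegva}. On generators this is exactly the content of the preceding Lemma, since $\eta'(a,a)=\eta(\rho(a),\rho(a))$. For general $\alpha=\sum_{a\in B} n_a e(a)\in Q_B$, I will expand
\[
\eta'(\alpha,\alpha) = \prod_{a} \eta'(a,a)^{n_a^2}\prod_{a<b}\bigl(\eta'(a,b)\eta'(b,a)\bigr)^{n_a n_b},
\]
substitute $\eta'(a,a)=e^{-\pi i \mathcal{N}'(a,a)}$ (Lemma) and $\eta'(a,b)\eta'(b,a)=e^{-2\pi i \mathcal{N}'(a,b)}$ (from the charge-factor axiom $\eta(\mu,\nu)\eta(\nu,\mu)=e^{-2\pi i \Delta(\mu,\nu)}$ applied at $\mu=\rho(a)$, $\nu=\rho(b)$ together with $\Delta(\rho(a),\rho(b))=\mathcal{N}'(a,b)+\mathbb{Z}$), and collect exponents. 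The total exponent is
\[
-\pi i\Bigl(\sum_a n_a^2 \mathcal{N}'(a,a) + 2\sum_{a<b} n_a n_b \mathcal{N}'(a,b)\Bigr) = -\pi i\, \mathcal{N}'(\alpha,\alpha),
\]
by the symmetric bilinear expansion of $\mathcal{N}'$, which is exactly what is required.

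With $\eta'$ in hand, I will verify that all hypotheses of Theorem \ref{sec:thmuniv} are satisfied: $\rho$ is a group homomorphism with $\Delta(\rho(\alpha),\rho(\beta))=\mathcal{N}'(\alpha,\beta)+\mathbb{Z}$ and $\eta(\rho(\alpha),\rho(\beta))=\eta'(\alpha,\beta)$ by construction; each $i(b)$ is $Q$-homogeneous of charge $\rho(e(b))$ by the assumption preceding the corollary; $i(B)$ generates $V$ by assumption; and $\mathcal{N}'$ is a locality bound on $(B,i)$ as already observed. Theorem \ref{sec:thmuniv} then yields the unique surjective GVA homomorphism $\pi:F(B,\mathcal{N}',\eta')\to V$ with $\pi|_B=i$, completing the proof. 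The only genuinely delicate point is the diagonal check in the middle paragraph: the very definition of $\mathcal{N}'$ is engineered so that the sign ambiguity $\eta(a,a)=\pm e^{-\pi i \mathcal{N}(a,a)}$ is absorbed into a $\pm 1$ shift of $\mathcal{N}(a,a)$, and bimultiplicativity then propagates the normalization from generators to all of $Q_B$ without further work.
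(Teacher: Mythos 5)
Your proposal is correct and follows essentially the same route as the paper: the paper's proof also defines $\eta'$ as the pullback of $\eta$ along the canonical homomorphism $Q_B\rightarrow Q$, $e(v)\mapsto\gamma(i(v))$, asserts the diagonal identity $\eta'(\alpha,\alpha)=e^{-\pi i\mathcal{N}'(\alpha,\alpha)}$, and invokes Theorem \ref{sec:thmuniv}. Your middle paragraph merely spells out the bimultiplicative expansion that the paper leaves implicit.
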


\begin{proof}
We have the canonical group homomorphism $\pi:Q_B\rightarrow Q$ by $\pi(\sum_{v\in B} a_v e(v))=\sum a_v \gamma(i(v))$.
Let $\alpha,\beta$ be elements of $Q_B$.
Define $\eta':Q_B\times Q_B\rightarrow \mathbb{C}^\times$ by $\eta'(\alpha,\beta)=\eta(\pi(\alpha),\pi(\beta))$.
Then, we have $\eta'(\alpha,\alpha)=e^{-\pi i \mathcal{N}'(\alpha,\alpha)}$ for any $\alpha\in Q_B$.
Then, by Theorem \ref{sec:thmuniv}, we have the proposition.
\end{proof}

\section{The generalized lattice vertex algebras and the free generalized vertex algebras}

\subsection{The generalized principal subspaces}

We introduce a notion equivalent to the free generalized vertex algebras---the {\it generalized principal subspaces}. 

Let $\mathfrak{h}$ be a vector space over $\mathbb{C}$ equipped with a bilinear form
$(\cdot|\cdot): \mathfrak{h}\times \mathfrak{h} \rightarrow \mathbb{C}$.
Let $\varepsilon: \mathfrak{h}\times \mathfrak{h}\rightarrow \mathbb{C}^\times$
be a $2$-cocycle.
Let $L\subset \mathfrak{h}$ be a lattice equipped with 
a $\mathbb{Z}$-basis $B\subset \mathfrak{h}$.
Assume that $B$ is also a $\mathbb{C}$-basis of $\mathfrak{h}$, so that
$\mathfrak{h}\cong \mathbb{C}\otimes_{\mathbb{Z}}L$.
Consider the generalized vertex algebra
\[
V_\mathfrak{h}=M(1) \otimes_{\mathbb{C}} \mathbb{C}[\mathfrak{h}]
\]
associated to the vector space $\mathfrak{h}$ \cite{BK}.
Consider the lattice generalized vertex algebra $V_L\subset V_\mathfrak{h}$ associated to the lattice $L$ (cf. \cite{DL, BK}).
Consider the $\varepsilon$-modified GVA $V_L^\varepsilon$.

\begin{dfn}
The {\it generalized principal subspace} (generalized principal subalgebra) of the lattice GVA $V_L^\varepsilon$
is the generalized vertex subalgebra
\[
W_L^\varepsilon(B)=\langle e^{\beta} | \beta\in B \rangle
\]
generated by the vectors $\{e^{\beta}|\beta\in B\}$.
\end{dfn}

We denote $W_L=W_L^\varepsilon=W_L^\varepsilon(B)$.

\begin{rem}
When $L$ is an integral lattice, $W_L$ agrees with the {\it principal subalgebra} of $V_L$ introduced in \cite{MP}.
When $L$ is the $A_l$ root lattice and the basis $B$ is a base of roots, $W_{A_l}$ agrees with the {\it principal subspace} of the basic representation of affine Kac-Moody Lie algebra $A_l^{(1)}$ introduced in \cite{SF}.
\end{rem}

Let $L'$ be an abelian subgroup of $\mathfrak{h}$ containing $L$ with 
a $2$-cocycle $\varepsilon:L'\times L'\rightarrow \mathbb{C}^\times$ extending the $2$-cocycle  $\varepsilon$ on $L$.
Consider the GVA $V_{L'}=\bigoplus_{\alpha\in L'} M(1)\otimes e^\alpha$ and the $\varepsilon$-modified GVA
$V_{L'}^\varepsilon$.
Let $\lambda$ be an element of $L'$.

\begin{dfn}
The {\it generalized principal subspace} of weight $\lambda$ over $W_L$ with the $\varepsilon$ on $L'$
is the cyclic $W_L$-submodule
\[
W_L^\varepsilon(B;\lambda)= W_L\cdot e^{\lambda} \subset V_{L'}.
\]
\end{dfn}

We denote $W_L(\lambda)=W_L^\varepsilon(\lambda)=W_L^\varepsilon(B;\lambda)$.
Note that $W_L(0)=W_L$ as vector subspaces.

Consider the vector $v=h_k(-i_k) \ldots h_1(-i_1) e^\alpha$
 with $k\geq 0$,
$h_1,\ldots,h_k \in \mathfrak{h}$, $i_1,\ldots,i_k \geq 1$ and $\alpha\in \mathfrak{h}$.
The {\it weight} of $v$ is
\[
\mathrm{wt}(v)= i_k+\cdots +i_1 + \frac{(\alpha|\alpha)}{2}.
\]
Let $V$ be a vector subspace of $(V_{L'})^\varepsilon$.
Set
$V_n=\mathbb{C}\mbox{-span}\{v\in V| \mathrm{wt}(v)=n\}$.
The grading $V_{L'}^\varepsilon=\bigoplus_{n\in \mathbb{C}} (V_{L'}^\varepsilon)_n$ is a weight-grading.
The subspaces $V_L^\varepsilon$, $W_L^\varepsilon$ and $W_L^\varepsilon(\lambda)$ are weight-graded.
Actually, $W_L^\varepsilon$ is $L_+$-graded, where $L_+=\bigoplus_{\alpha\in B} \mathbb{Z}_+ \alpha$.

\subsection{Bases of the generalized principal subspaces}

Set $\mathcal{N}(e^\alpha,e^\beta)=-(\alpha|\beta)$ for $\alpha,\beta\in B$.
Identify the $\mathbb{Z}$-basis $B$ and the subset $B=\{e^\alpha|\alpha\in B\}\subset W_L^\varepsilon (B)$.
The symmetric function $\mathcal{N}$ is a locality bound on the subset $B$.
Consider a total order $\leq$ on $B$.

\begin{thm}\label{sec:basis}
The set $\mathcal{C}(W_L^\varepsilon(B),B,\mathcal{N})$ is a basis of $W_L^\varepsilon(B)$.
\end{thm}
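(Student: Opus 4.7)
By Corollary \ref{sec:span} applied to the GVA $V = W_L^\varepsilon(B)$ with generating subset $B = \{e^\beta : \beta \in B\}$ and locality bound $\mathcal{N}(e^\alpha, e^\beta) = -(\alpha|\beta)$, the set $\mathcal{C}(W_L^\varepsilon(B), B, \mathcal{N})$ already spans $W_L^\varepsilon(B)$. So the task reduces to establishing linear independence.

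First I would reduce to a single $L_+$-component. Since every $\Psi(a_1,m_1;\ldots;a_k,m_k)$ has charge $a_1+\cdots+a_k$ and $W_L^\varepsilon(B)$ is $L_+$-graded, it suffices to treat a fixed charge $\alpha = \sum_{a\in B} n_a\,a$. Once $\alpha$ is fixed, the increasing sequence $(a_1,\ldots,a_k)$ is forced, and the basis vectors of charge $\alpha$ are parameterised by the admissible tuples $(m_1,\ldots,m_k)\in\mathbb{Z}_{<0}^k$ satisfying $m_i\geq m_j$ whenever $i<j$ and $a_i=a_j$.

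Next I would realise each such $\Psi$ explicitly in $M(1)\otimes e^\alpha$ through the normal-ordering of lattice vertex operators,
\begin{equation*}
Y(e^{a_k},z_k)\cdots Y(e^{a_1},z_1)|0\rangle = C\cdot\!\prod_{i<j}(z_j-z_i)^{(a_i|a_j)}\cdot\exp\!\Bigl(\sum_{i=1}^{k}\sum_{n\geq 1}\frac{a_i(-n)}{n}\,z_i^{\,n}\Bigr)\,e^{\alpha},
\end{equation*}
where $(z_j-z_i)^{(a_i|a_j)}$ is expanded in nonnegative powers of $z_i/z_j$ and $C$ is a nonzero cocycle constant. The vector $\Psi(a_1,m_1;\ldots;a_k,m_k)$ is the coefficient of $z_1^{-N_1-1}\cdots z_k^{-N_k-1}$ with $N_i=m_i-\sum_{j<i}(a_i|a_j)$; after absorbing the ``diagonal'' factor $\prod_{j}z_j^{\sum_{i<j}(a_i|a_j)}$, this amounts to extracting the coefficient of $\prod_j z_j^{-m_j-1}$ from $\prod_{i<j}(1-z_i/z_j)^{(a_i|a_j)}\,\exp(\sum a_i(-n)z_i^{\,n}/n)$.

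I would then equip the polynomial ring $M(1)\cong \mathbf{k}[a_{-n}:a\in B,\,n\geq 1]$ with a monomial order that refines first by total degree in the $a_{-n}$ and then by a lexicographic rule on the multiset of indices, and identify the leading term of $\Psi(a_1,m_1;\ldots;a_k,m_k)$ as a specific monomial in the $a_{-n}$'s depending injectively on the admissible tuple $(m_1,\ldots,m_k)$. The condition $m_i\geq m_j$ when $a_i=a_j$ is precisely what prevents distinct admissible tuples from giving identical leading monomials, and linear independence in $M(1)\otimes e^\alpha$ follows. The main technical difficulty is that, $L$ not being assumed integral, the exponents $(a_i|a_j)$ are arbitrary complex numbers, so $\prod_{i<j}(1-z_i/z_j)^{(a_i|a_j)}$ is a genuine formal power series rather than a polynomial; I would need to control its binomial-type expansion carefully and check that it contributes only strictly higher-order terms under the chosen monomial order, so that the leading-term identification (and nonvanishing of the corresponding coefficient) is preserved.
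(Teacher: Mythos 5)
Your reduction to spanning via Corollary \ref{sec:span} and to a fixed charge component is fine, and the realization of $\Psi(a_1,m_1;\cdots;a_k,m_k)$ as a coefficient of $\prod_{i<j}(1-z_i/z_j)^{(a_i|a_j)}\exp\bigl(\sum_{i,n}a_i(-n)z_i^{\,n}/n\bigr)e^{\alpha}$ is correct. The gap is in the decisive step: there is no monomial order on $M(1)$ for which the leading monomial depends injectively on the admissible tuple, so the triangularity you rely on does not exist. Concretely, take $B=\{a\}$ with $(a|a)=1$, charge $2a$ and $m_1+m_2=-4$. Writing $p_d$ for the coefficient of $z^d$ in $\exp(\sum_{n\geq 1}a(-n)z^n/n)$ and $d_i=-m_i-1$, one gets $\Psi(e^a,m_1;e^a,m_2)=\varepsilon(a,a)\sum_{\ell\geq 0}(-1)^\ell\binom{1}{\ell}p_{d_1-\ell}\,p_{d_2+\ell}\,e^{2a}$, hence for the two admissible tuples $(-1,-3)$ and $(-2,-2)$,
\[
\Psi(e^a,-1;e^a,-3)=\varepsilon(a,a)\Bigl(\tfrac{1}{2}a(-1)^2+\tfrac{1}{2}a(-2)\Bigr)e^{2a},\qquad
\Psi(e^a,-2;e^a,-2)=\varepsilon(a,a)\Bigl(\tfrac{1}{2}a(-1)^2-\tfrac{1}{2}a(-2)\Bigr)e^{2a}.
\]
Both vectors lie in the same charge and weight component and are supported on the \emph{entire} set of monomials $\{a(-1)^2,\,a(-2)\}$ with nonzero coefficients, so under any total order they share the same leading monomial; they are of course independent, but only an invertibility (determinant) computation sees this, not a leading-term argument. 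Note also that the difficulty is not the one you flag (non-integral exponents): here $(a|a)=1\in\mathbb{Z}$.

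The paper avoids this entirely by a Georgiev-type dual-operator argument: it extends $\mathfrak{h}$ so that the form is nondegenerate, chooses dual vectors $\beta^\circ$ with $(\beta^\circ|\gamma)=\delta_{\beta\gamma}$, and for each candidate basis element $a$ builds an operator $X_a$ from the modes $(e^{\beta^\circ})(-1)$ and simple-current operators $e_{-\beta}$, $e_{-\beta^\circ}$, satisfying $X_a(a)=c|0\rangle$ with $c\neq 0$ and $X_a(b)=0$ whenever $a\succ b$ (Lemma \ref{sec:lemdaisyou}); linear independence then follows by triangularity against these operators, and the general $2$-cocycle is reduced to the trivial one at the end. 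If you want to keep your generating-function realization, you would have to replace the leading-term step by an honest nonvanishing-determinant argument or by pairing against such dual operators.
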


Our proof is a generalization of the one given in \cite{G}.
We first prove the case when $\varepsilon$ is trivial, that is, $\varepsilon(\alpha,\beta)=1$ for any $\alpha,\beta\in L$.

\begin{prop}\label{sec:basistrivial}
If the $2$-cocycle $\varepsilon$ is trivial, then the set $\mathcal{C}(W_L^\varepsilon(B),B,\mathcal{N})$ is a basis of $W_L^\varepsilon(B)$.
\end{prop}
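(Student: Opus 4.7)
The plan is to establish linear independence of $\mathcal{C}(W_L^\varepsilon(B),B,\mathcal{N})$; spanning is already given by Corollary \ref{sec:span}. The strategy is to realize each proposed basis vector explicitly inside $V_\mathfrak{h}^\varepsilon = M(1)\otimes\mathbb{C}[\mathfrak{h}]$ and then argue via a Jastrow/leading-term calculation, in the spirit of Georgiev.

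First I would decompose a hypothetical relation into bi-homogeneous pieces. Since $W_L^\varepsilon$ is graded by $L_+$ and by weight, any linear relation among elements of $\mathcal{C}$ can be assumed to lie inside a single finite-dimensional component $M(1)_d\otimes e^\alpha$ for fixed $\alpha = \sum_{\beta\in B} n_\beta\beta \in L_+$ and fixed $d$. Because $B$ is a $\mathbb{C}$-basis of $\mathfrak{h}$, the symmetric algebra $M(1)$ is the polynomial algebra $\mathbf{k}[\beta(-n)\ :\ \beta\in B,\ n\geq 1]$, which provides a clean polynomial ring in which to test independence.

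Next I would compute the image of a basis vector explicitly. With $\varepsilon$ trivial, the factors $e_\beta$ multiply freely, and $E^+(-\beta,z)$ acts trivially on $e^\alpha$ (since $e^\alpha$ lies in the vacuum of $M(1)$), so iterating the formula $Y(e^\beta,z)=E^-(-\beta,z)E^+(-\beta,z)\,e_\beta\,z^\beta$ gives
\[
Y(e^{\beta_1},z_1)\cdots Y(e^{\beta_k},z_k)|0\rangle
=\prod_{i<j}\iota_{z_j,z_i}(z_j-z_i)^{(\beta_i|\beta_j)}\cdot\prod_{i=1}^{k}E^-(-\beta_i,z_i)\cdot e^{\beta_1+\cdots+\beta_k}.
\]
The built-in shifts $\sum_{j<i}\mathcal{N}(a_i,a_j)=-\sum_{j<i}(\beta_i|\beta_j)$ in the definition of $\Psi(a_1,m_1;\ldots;a_k,m_k)$ are designed exactly so that extracting the coefficient of $\prod z_i^{-m_i-1}$ (with the prescribed expansion $\iota_{z_j,z_i}$) turns $\Psi$ into an explicit element of $M(1)$ times $e^\alpha$, with the condition $m_i\in\mathbb{Z}_{<0}$ coming from the fact that $E^-$ is a power series in $z_i$ and the Jastrow factor contributes only nonnegative powers after expansion.

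Finally, I would deduce linear independence by a triangularity/leading-term argument. Order the multi-indices $(a_1\leq\cdots\leq a_k;\, m_1,\ldots,m_k)$ that parametrize $\mathcal{C}$ lexicographically; for single-color blocks (where $a_i$ is fixed), the vectors $\Psi$ are, up to the multiplicative Jastrow factor $\prod_{i<j}(z_j-z_i)^{(\beta|\beta)}$, obtained from Jack-polynomial-type symmetrizations indexed by partitions $m_1\geq m_2\geq\cdots$, so each such vector has a unique leading PBW monomial $\prod_\beta\prod_n \beta(-n)^{k_{\beta,n}}\cdot e^\alpha$ in the polynomial ring $M(1)$. Between distinct color blocks the leading monomials involve disjoint generators, so the triangular structure persists. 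Since leading monomials are distinct and $M(1)\otimes e^\alpha$ is a polynomial algebra, linear independence follows. Induction on $\mathrm{ht}(\alpha)$ (or equivalently $k$) handles any potential collisions by peeling off the largest color $\beta_r$ and applying the shift operator $e_{-\beta_r}$, which injectively maps $W_L^\varepsilon(\mu)\to W_L^\varepsilon(\mu-\beta_r)$ and reduces the number of occurrences of $\beta_r$.

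The main obstacle is controlling the multi-variable Jastrow factor $\prod_{i<j}\iota_{z_j,z_i}(z_j-z_i)^{(\beta_i|\beta_j)}$ when $L$ is not integral, since the exponents $(\beta_i|\beta_j)$ need not be integers and the series is only well-defined after fixing an expansion convention. I expect the careful bookkeeping for this factor, and the verification that the leading-monomial map is indeed triangular on the full colored-partition indexing set (not just within a single color block), to be the technically delicate step; for a single color $\beta$ it reduces to the classical fact that Jack-type symmetric functions in the variables $\beta(-n)$ form a basis of the Heisenberg Fock module, which is standard.
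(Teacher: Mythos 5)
Your overall strategy (spanning is Corollary \ref{sec:span}, so prove linear independence by realizing each $\Psi$ explicitly in $M(1)\otimes e^{\alpha}$) is legitimate and genuinely different from the paper's route: the paper does not compute leading terms at all, but instead extends $\mathfrak{h}$ to a nondegenerate $\mathfrak{h}'$, takes dual vectors $\beta^{\circ}$ with $(\beta^{\circ}|\gamma)=\delta_{\beta,\gamma}$, and builds for each $a\in\mathcal{C}$ an explicit annihilating operator $X_a$ out of simple current operators $e_{-\lambda}$ and the modes $(e^{\beta^{\circ}})(-1)$ (which commute or anticommute with the $e^{\gamma}(t)$ because $\mathcal{N}(e^{\beta^{\circ}},e^{\gamma})=-\delta_{\beta,\gamma}\leq 0$). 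The two clean facts $X_a(a)=c|0\rangle$ with $c=\pm1$ and $X_a(b)=0$ for $a\succ b$ then give independence by the usual dual-pairing induction, with no dependence on the values of the Gram matrix.

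The gap in your version is exactly the step you flagged, and it is not merely delicate --- the uniform triangularity you assert is false. Take a single colour $\beta$ with $\nu=(\beta|\beta)$ and charge $2\beta$, weight $2$ above $e^{2\beta}$. A direct computation from $Y(e^{\beta},z)=E^{-}(-\beta,z)E^{+}(-\beta,z)e_{\beta}z^{\beta}$ gives
\[
\Psi(e^{\beta},-1;e^{\beta},-3)=\tfrac12\bigl(\beta(-1)^2+\beta(-2)\bigr)e^{2\beta},\qquad
\Psi(e^{\beta},-2;e^{\beta},-2)=\bigl(1-\tfrac{\nu}{2}\bigr)\beta(-1)^2e^{2\beta}-\tfrac{\nu}{2}\,\beta(-2)e^{2\beta}.
\]
Whichever PBW monomial you designate as the leading term of $\Psi(e^{\beta},-2;e^{\beta},-2)$, its coefficient vanishes for some $\nu$ (at $\nu=0$ the $\beta(-2)$ component disappears, at $\nu=2$ the $\beta(-1)^2$ component disappears), so no single monomial order makes the transition matrix unitriangular for all Gram matrices; independence survives only because the determinant happens to be the constant $-\tfrac12$. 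For the same reason the appeal to Jack-polynomial theory does not close the gap: that theory itself degenerates at special (e.g.\ negative rational) parameters, and here $\nu$ is an arbitrary complex number. The cross-colour step has the same problem: the factors $\iota_{z_j,z_i}(z_j-z_i)^{(\beta_i|\beta_j)}$ with non-integral exponents are infinite series mixing all variables, so the coefficient of a fixed $\prod z_i^{-m_i-1}$ is not a product over colour blocks and ``disjoint generators'' does not by itself give triangularity. To make your approach work you would have to prove that the relevant determinants are nonzero \emph{constants} in the Gram matrix entries (e.g.\ by specializing to a generic integral case and invoking Roitman, then arguing polynomiality), which is a substantial additional argument; the paper's operators $X_a$ avoid the issue entirely.
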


Suppose the $2$-cocycle $\varepsilon$ is trivial.
Consider $L'=\mathfrak{h}$ with the trivial $2$-cocycle $\varepsilon$ on $\mathfrak{h}$.
We denote $W=W_L^\varepsilon(B)$ and $W(\lambda)=W_L^\varepsilon(B;\lambda)$ ($\lambda\in \mathfrak{h}$).

First, we extend the metric vector space $\mathfrak{h}$, which is not necessarily non-degenerate, to a non-degenerate one.
Consider the kernel  $K=\ker (\cdot|\cdot)$ of the bilinear form $(\cdot|\cdot)$ of $\mathfrak{h}$.
Let $C$ be a basis of $K$.
Let $K'$ be a complement of $K$ in $\mathfrak{h}$.
Let $K^*$ be a vector space isomorphic to $K$ with a basis $C^*$ and a bijection $*:C\rightarrow C^*$, $\mu \mapsto \mu^*$.
Set $\mathfrak{h}'=\mathfrak{h}\oplus K^*$ and
extend the bilinear form to be
$(K'|K^*)=0$ and $(\mu|\nu^*)=\delta_{\mu,\nu}$
for $\mu,\nu\in C$.
Then the bilinear form is non-degenerate.
Let $D$ denote the dual basis of the basis $B\sqcup C^*$
with respect to $(\cdot|\cdot)$ with a bijection $\circ:B\sqcup C^*\rightarrow D$, $\beta\mapsto \beta^\circ$, with 
$(\beta^\circ|\gamma)=\delta_{\beta,\gamma}$ ($\beta,\gamma\in B\sqcup C^*$).
Consider the generalized vertex algebra $V_{\mathfrak{h}'}$ (with the trivial $2$-cocycle on $\mathfrak{h}'$).
Then, $V_{\mathfrak{h}}$ is a generalized vertex subalgebra of $V_{\mathfrak{h}'}$.

Let $\lambda$ be an element of $\mathfrak{h}'$.
Consider the simple current operator
\[
e_\lambda:W\longrightarrow W(\lambda),\quad e_\lambda(h\otimes e^\alpha)=h\otimes e^{\alpha+\lambda},
\]
for $h\in M(1)$ and $\alpha\in L_+$.
Then, we have
\begin{equation}\label{eqn:comms}
e_{\lambda} \circ e^{\alpha}(m)= e^{\alpha}(m-(\lambda|\alpha)) \circ e_{\lambda}
\end{equation}
for $\alpha\in L_+$ and $m\in \mathbb{C}$.
Let $\beta,\gamma$ be elements of $B$.
Note that $\mathcal{N}(e^{\beta^\circ},e^\gamma)=-\delta_{\beta,\gamma}\leq 0$.
In formula (\ref{eqn:local2}), substitute $a=e^{\beta^\circ}$, $b=e^{\gamma}$ and $N=0$.
Then,  we obtain the commutation relation
\begin{eqnarray}\label{eqn:commutation}
e^{\beta^\circ}(s) e^{\gamma}(t) -(-1)^{\delta_{\beta,\gamma}} e^{\gamma}(t) e^{\beta^\circ}(s)=0.
\end{eqnarray}

Let $a$ be an element of $\mathcal{C}(W,B,\mathcal{N})$.
Then $a$ has the form
\[
 a=\Psi(e^{\beta_1},m_1;\cdots;e^{\beta_k},m_k)=\prod_{i=1}^k \left( e^{\beta_i}(m_i-
{\textstyle \sum_{j=1}^{i-1}}(\beta_i|\beta_j))\right)|0\rangle
\]
with a non-negative integer $k$, elements $\beta_1,\ldots,\beta_k\in B$ with $\beta_1\leq \cdots\leq \beta_k$ and
negative integers $m_1,\ldots,m_k<0$ such that $m_i\geq m_j$ 
if $i<j$ and $\beta_i=\beta_j$. 
Let $i$ be an element of $\{1,\ldots,k\}$.
If $i\geq 2$ and $\beta_i=\beta_{i-1}$, then set
\[
D_a^i=e_{-\beta_i}\circ\left( e_{-(\beta_i)^\circ}\circ (e^{(\beta_i)^\circ})(-1)\right)^{-m_i+m_{i-1}};
\]
otherwise, set
\[
D_a^i=e_{-\beta_i}\circ\left( e_{-(\beta_i)^\circ}\circ (e^{(\beta_i)^\circ})(-1)\right)^{-m_i-1}.
\]
Note that for $\gamma\in B$ and $m\in \mathbb{C}$, we have the commutation relation
\begin{eqnarray}\label{eqn:commd}
&&(e_{-(\beta_i)^\circ}\circ(e^{(\beta_i)^\circ})(-1)) \circ e^\gamma(m) \nonumber\\
&&\quad\quad =(-1)^{\delta_{\beta_i,\gamma}}e^\gamma(m+\delta_{\beta_i,\gamma})\circ
(e_{-(\beta_i)^\circ}\circ(e^{(\beta_i)^\circ})(-1)),
\end{eqnarray}
by eq.\,(\ref{eqn:comms}) and (\ref{eqn:commutation}).
Consider the operator
\[
X_a=\prod_{i=1}^k D_a^i=D_a^k\circ \cdots \circ D_a^1.
\]

\begin{lem}
There is a non-zero scalar $c\in \mathbb{C}^\times$ such that
\begin{equation}\label{eqn:xaa}
X_a(a)=c\cdot |0\rangle.
\end{equation}
\end{lem}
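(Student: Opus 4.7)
The plan is to proceed by induction on $k$. The base case $k = 0$ is immediate: $a = |0\rangle$, $X_a$ is the empty product (identity), and $c = 1$ works. For the inductive step, decompose $a = e^{\beta_k}(n_k) \, a'$ with $n_k = m_k - \sum_{j=1}^{k-1}(\beta_k|\beta_j)$ and $a' = \Psi(e^{\beta_1}, m_1; \cdots; e^{\beta_{k-1}}, m_{k-1})$. Since the definition of $D_b^i$ depends only on $\beta_i, \beta_{i-1}, m_i, m_{i-1}$, we have $D_a^i = D_{a'}^i$ for $i \leq k-1$, and hence $X_a = D_a^k \circ X_{a'}$; the inductive hypothesis applies to $a'$ because $a' \in \mathcal{C}(W,B,\mathcal{N})$.

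The heart of the argument is to commute $X_{a'}$ past the leftmost creation operator $e^{\beta_k}(n_k)$, using \eqref{eqn:comms} for the $e_{-\beta_i}$ factors and \eqref{eqn:commd} for the raising factors. Each $D_{a'}^i = e_{-\beta_i} \circ (e_{-(\beta_i)^\circ} \circ e^{(\beta_i)^\circ}(-1))^{p_i}$ commuting past $e^{\beta_k}(m)$ shifts the mode by $(\beta_i|\beta_k) + p_i \delta_{\beta_i, \beta_k}$ and produces a sign $(-1)^{p_i \delta_{\beta_i,\beta_k}}$. The total shift on $n_k$ is thus $\Delta = \sum_{i=1}^{k-1}(\beta_i|\beta_k) + \sum_{i:\beta_i=\beta_k} p_i$. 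Because $\beta_1 \leq \cdots \leq \beta_k$, the indices with $\beta_i = \beta_k$ form a contiguous tail $\{j_0,\ldots,k-1\}$, and with $p_{j_0} = -m_{j_0}-1$ (boundary) and $p_i = -m_i + m_{i-1}$ (interior), the second sum telescopes to $-m_{k-1}-1$ when $\beta_k = \beta_{k-1}$, and is empty otherwise. A direct check gives $n_k + \Delta = m_k$ when $\beta_k \neq \beta_{k-1}$ and $n_k + \Delta = m_k - m_{k-1} - 1$ when $\beta_k = \beta_{k-1}$.

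The exponent $p_k$ in $D_a^k$ is defined precisely so that $n_k + \Delta + p_k = -1$ in both cases. Combined with the inductive hypothesis $X_{a'}(a') = c'|0\rangle$ (with $c' \neq 0$), this yields $X_{a'}(a) = \pm c' \cdot e^{\beta_k}(n_k + \Delta)|0\rangle$; applying $D_a^k$, the raising factors shift the mode of $e^{\beta_k}$ down to $-1$, turning $e^{\beta_k}(-1)|0\rangle$ into $e^{\beta_k}$, and finally $e_{-\beta_k}(e^{\beta_k}) = |0\rangle$. Each intermediate application of $e_{-(\beta_k)^\circ} \circ e^{(\beta_k)^\circ}(-1)$ on the vacuum gives a non-zero multiple of $|0\rangle$, where triviality of $\varepsilon$ is used to rule out cocycle interference. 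Thus $X_a(a) = c|0\rangle$ with $c \neq 0$. The main obstacle is the telescoping bookkeeping in the repeated case $\beta_k = \beta_{k-1}$: one must correctly sum the $p_i$'s over the entire repeated tail (handling the boundary term $p_{j_0}$ separately), match this to the exact choice of $p_k$ built into $D_a^k$, and verify that none of the accumulated scalars vanish.
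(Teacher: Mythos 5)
Your proof is correct, and it runs the induction in the mirror-image direction to the paper's. The paper peels off the \emph{innermost} data: it applies $D_a^1$ (the first operator in the composition $X_a=D_a^k\circ\cdots\circ D_a^1$) directly to $a$, pushes the raising factors and $e_{-\beta_1}$ all the way down to the vacuum using \eqref{eqn:commd}, \eqref{eqn:comms} and \eqref{eqn:vacuum}, and finds $D_a^1(a)=(-1)^{(-m_1-1)k_1}\Psi(e^{\beta_2},t_2;\cdots;e^{\beta_k},t_k)$ with shifted modes $t_i=m_i-m_1-1$ on the initial block $\beta_1=\cdots=\beta_{k_1}$; it then must check that these shifts preserve the differences $-t_i+t_{i-1}=-m_i+m_{i-1}$ so that $X_a=X_{a'}\circ D_a^1$ for the shifted monomial $a'$. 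You instead peel off the \emph{outermost} data $e^{\beta_k}(n_k)$ and $D_a^k$: your truncation $a'$ is literally a sub-monomial of $a$ with unchanged modes, so $D_a^i=D_{a'}^i$ and $X_a=D_a^k\circ X_{a'}$ are immediate, and the work is transferred to commuting all of $X_{a'}$ past the single operator $e^{\beta_k}(n_k)$, where your telescoping of the exponents $p_i$ over the repeated tail correctly produces the total shift $-m_{k-1}-1$ (resp.\ $0$) and matches the exponent built into $D_a^k$ so that the mode lands at $-1$. Both arguments use exactly the identities \eqref{eqn:comms}, \eqref{eqn:commd} and \eqref{eqn:vacuum}, and both produce a nonzero scalar that is a sign times the inductive constant; your version buys a cleaner application of the inductive hypothesis (no re-verification that the shifted monomial lies in $\mathcal{C}$) at the price of the boundary-versus-interior bookkeeping for the $p_i$, which you handle correctly.
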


\begin{proof}
We prove (\ref{eqn:xaa}) by induction on $k$.

Assume $k=0$. Then, $a=|0\rangle$, and $X_a$ is the identity operator on $W$.
Therefore, $X_a(a)=c|0\rangle$ with $c=1\in \mathbb{C}^\times$.

Assume $k>0$.
Consider the operator $D_a^{1}=e_{-\beta_1}\circ (e_{-(\beta_1)^\circ}\circ (e^{(\beta_1)^\circ})(-1))^{-m_1-1}$.
Let $k_1$ denote the maximum number in $\{1,\ldots,k\}$ such that $\beta_1=\cdots=\beta_{k_1}$.
Since 
\begin{equation}\label{eqn:vacuum}
e^{\alpha}(-1)|0\rangle=e^{\alpha} \quad \mbox{ and } \quad e_{-\alpha}e^{\alpha}=|0\rangle \quad \mbox{ for any } \alpha\in \mathfrak{h}',
\end{equation}
by eq.\,(\ref{eqn:commd}), we have
\begin{eqnarray*}
D_a^{1}(a)&=&  (-1)^{(-m_1-1)k_1} \cdot e_{-\beta_1} \Psi(e^{\beta_1},t_1;\cdots;e^{\beta_k},t_k)\\
&=& (-1)^{(-m_1-1)k_1} \cdot e_{-\beta_1} \circ \prod_{i=1}^k \left( e^{\beta_i}(t_i-{\textstyle \sum_{j=1}^{i-1}} (\beta_i|\beta_j))\right)|0\rangle
\end{eqnarray*}
with $t_i=m_i-m_1-1$ for $i=1,\ldots,k_1$ and $t_j=m_j$ for $j=k_1+1,\ldots,k$.
Since $t_1=m_1-m_1-1=-1$, by eq.\,(\ref{eqn:vacuum}) and (\ref{eqn:comms}),
we have
$D_a^{1}(a)=(-1)^{(-m_1-1)k_1} \cdot \Psi(e^{\beta_2},t_2;\cdots;e^{\beta_k},t_k)$.
Then, $X_a=X_{a'}\circ D_a^{1}$, since $-t_i+t_{i-1}=-m_{i}+m_{i-1}$ for $i=2,\ldots,k_1$.
By the induction hypothesis, we have $X_{a'}(a')=c'|0\rangle$ with $c'\in \mathbb{C}^\times$, so that
 $X_a(a)=c|0\rangle$ with $c=(-1)^{(-m_1-1)k_1}c'$, as required.
\end{proof}

Let $b$ be a monomial in $\mathcal{C}(W,B,\mathcal{N})$.
Then $b$ has the form
\[
 b=\Psi(e^{\gamma_1},n_1;\cdots;e^{\gamma_l},n_l)=\prod_{i=1}^l \left( e^{\gamma_i}(n_i-
{\textstyle \sum_{j=1}^{i-1}}(\gamma_i|\gamma_j))\right)|0\rangle
\]
with a non-negative integer $l\geq 0$, elements $\gamma_1,\ldots,\gamma_l\in B$ with $\gamma_1\leq \cdots \leq \gamma_l$ and
negative integers $n_1,\ldots,n_l<0$ such that 
$n_i\geq n_j$ if $i<j$ and $\gamma_i=\gamma_j$.
We say
\[\\
a\succ b,
\]
if {\bf (1)} $k=l$; {\bf (2)} $\beta_i=\gamma_i$ for all $i=1,\ldots,k$,
and
{\bf (3)} there exists $t\in \{1,\ldots,k\}$ such that
$m_t<n_t$ and $m_j=n_j$ for any $j<t$.
Note that when $a$ and $b$ have the same charge, either $a\prec b$, $a=b$ or $a\succ b$ hold.

\begin{lem}
\label{sec:lemdaisyou}
If $a\succ b$, then $X_a(b)=0$.
\end{lem}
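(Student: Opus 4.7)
The plan is to proceed by induction on $k$, mirroring the calculation of $X_a(a)$ in the preceding lemma, and to show that the failure point $t$ gets consumed when $X_a$ is peeled off one factor at a time. The base case $k=0$ is vacuous since $a \succ b$ requires at least one index at which $a$ and $b$ disagree. So assume $k \geq 1$.

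For the inductive step, I will compute $D_a^1(b)$ by moving the $(-m_1-1)$ copies of $(e_{-(\beta_1)^\circ}\circ e^{(\beta_1)^\circ}(-1))$ rightward through the factors of $b$ using the commutation identity (\ref{eqn:commd}). Since $\gamma_i = \beta_i$ for every $i$, the shifts and signs behave exactly as in the computation of $D_a^1(a)$: letting $k_1$ denote the size of the maximal initial block with $\beta_1 = \cdots = \beta_{k_1}$, the operator $(e_{-(\beta_1)^\circ}\circ e^{(\beta_1)^\circ}(-1))^{-m_1-1}$ commutes past the product and annihilates the vacuum to give the identity, and the result is $(-1)^{(-m_1-1)k_1}\,\Psi(e^{\beta_1}, s_1; \cdots; e^{\beta_k}, s_k)$, where $s_i = n_i - m_1 - 1$ for $i \leq k_1$ and $s_i = n_i$ otherwise.

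Now split on the value of $t$. If $t = 1$, then $m_1 < n_1$, both negative integers, so $s_1 = n_1 - m_1 - 1 \geq 0$. The innermost factor $e^{\beta_1}(s_1)$ of the above expression is a non-negative mode and therefore annihilates $|0\rangle$ (as a field applied to the vacuum lies in $V[[z]]$). Hence $D_a^1(b) = 0$ and a fortiori $X_a(b) = 0$. If instead $t \geq 2$, then $m_1 = n_1$, so $s_1 = -1$, and applying the final $e_{-\beta_1}$ of $D_a^1$ consumes the innermost $e^{\beta_1}(-1)|0\rangle = e^{\beta_1}$ exactly as in the previous lemma, producing
$$D_a^1(b) = (-1)^{(-m_1-1)k_1}\,\Psi(e^{\beta_2}, s_2; \cdots; e^{\beta_k}, s_k) = (-1)^{(-m_1-1)k_1}\, b'.$$
Comparing with the corresponding reduction of $a$, we get $a' = \Psi(e^{\beta_2}, t_2;\cdots;e^{\beta_k}, t_k)$ with $t_i - s_i = m_i - n_i$ for every $i \geq 2$, so $a' \succ b'$ with deviation index $t - 1$. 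One also checks that $D_a^{i+1}$ matches $D_{a'}^i$ under this reduction (the clause distinguishing $\beta_i = \beta_{i-1}$ from $\beta_i > \beta_{i-1}$ is preserved on the tail), so $X_a = X_{a'} \circ D_a^1$. The induction hypothesis gives $X_{a'}(b') = 0$, hence $X_a(b) = 0$.

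The only delicate point is the bookkeeping that the recursion $D_a^i \leftrightarrow D_{a'}^{i-1}$ is faithful, together with verifying that the vacuum-annihilation argument in the $t=1$ case really uses only negativity of $m_1$ and the inequality $m_1 < n_1$; beyond that, the argument is a pure iteration of the computation already carried out in Lemma's preceding statement. No new identities beyond (\ref{eqn:commd}), (\ref{eqn:comms}) and (\ref{eqn:vacuum}) are needed.
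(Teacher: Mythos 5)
Your proof is correct and follows essentially the same route as the paper: both arguments push the operators $e_{-(\beta_i)^\circ}\circ (e^{(\beta_i)^\circ})(-1)$ through the monomial via (\ref{eqn:commd}), (\ref{eqn:comms}) and (\ref{eqn:vacuum}) and conclude from the fact that at the deviation index $t$ the innermost mode becomes $n_t-m_t-1\geq 0$, which annihilates the vacuum. The only difference is organizational — the paper applies $\prod_{i=1}^{t-1}D_a^i$ in one step and then kills the result with $D_a^t$, whereas you peel off one factor at a time by induction on $k$ — so the two proofs are interchangeable.
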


\begin{proof}
Suppose $a\succ b$ with an element $t\in \{1,\ldots,k\}$ such that $m_t<n_t$ and $m_j=n_j$ for $j<t$.
Put
$X_a'=\prod_{i=1}^{t-1} D_a^{i}$.
If $t>1$ and $\beta_{t-1}=\beta_{t}$, then set $m:=m_{t-1}$; otherwise, set $m:=-1$.
Then, $X_a'(b)$ is a multiple of
$
\Psi(e^{\beta_t},n_t-m-1;e^{\beta_{t+1}},p_{t+1};\cdots;e^{\beta_k},p_k)
$
with some $p_{t+1},\ldots,p_k$.
Therefore,  $D_a^t X_a'(b)=0$, since $n_t$ is greater than $m_t$ and 
$e^\gamma(-1+j)|0\rangle=0$ for $j\geq 1$.
Hence,  $X_a(b)=0$, as desired.
\end{proof}

\begin{proof}[Proof of Proposition \ref{sec:basistrivial}]
By Corollary \ref{sec:span}, it suffices to show that the set $\mathcal{C}{(W,B,\mathcal{N})}$ is linearly independent.
Let $m$ be a positive integer.
Let $v_1,\ldots,v_m$ be elements of $\mathcal{C}(W,B,\mathcal{N})$ with
$v_i\neq v_j$ ($i,j\in \{1,\ldots,m$\}, $i\neq j$) and $v_1\not \prec v_i$ ($i=2,\ldots,m$).
Let $c_1,\ldots,c_m$ be complex numbers
with 
\begin{equation}\label{eqn:indep}
c_1v_1+\cdots+c_m v_m=0.
\end{equation}
We show $c_1=\cdots=c_m=0$ by induction on $m$.
When $m=1$, we have $c_1=0$.
Therefore, assume $m>1$.
Note that the elements of $\mathcal{C}(W,B,\mathcal{N})$ are charge-homogeneous vectors.
If the charge of $v_1$ is not equal to the charge of $v_i$ for some $i$, by the induction hypothesis,
we have $c_1=\cdots=c_m=0$.
Assume that the charge of $v_1$ is equal to the charge of $v_i$ for all $i=1,\ldots,m$.
Then, we have $v_1\succ v_i$ for $i=2,\ldots,m$.
By applying the operator $X_1$ to both sides of (\ref{eqn:indep}),
we obtain $c_1\cdot c\cdot |0\rangle=0$ with $c\in \mathbb{C}^\times$,
by Lemma \ref{sec:lemdaisyou} and equality (\ref{eqn:xaa}).
Therefore, $c_1=0$.
By the induction hypothesis, $c_2=\cdots=c_m=0$, which completes the proof.
\end{proof}

\begin{proof}[Proof of Theorem \ref{sec:basis}.]
Let $a$ be an element of the subset $B$.
Note that the $\varepsilon$-modified GVA $V_L^\varepsilon$ is a GVA with the same underlying vector space $V_L$ and the modified vertex operators $Y^\varepsilon(a,z)$.
The modified vertex operator has the form $Y^\varepsilon(a,z)=\sum a^\varepsilon(n) z^{-n-1}$ with operators $a^\varepsilon(n):V_L\rightarrow V_L$ ($n\in\mathbb{C}$).
For a homogeneous $v\in V_L$, we see that $a^\varepsilon(n) v$ is a non-zero multiple of $a(n) v$,
say, $\varepsilon(\gamma(a),\gamma(v))$.
Here, $\gamma(a)$ and $\gamma(v)$ are charges of $a$ and $v$.
Since the elements of $\mathcal{C}(W,B,\mathcal{N})$ are monimials in the homogeneous vectors $B$,
and the set $\mathcal{C}(W,B,\mathcal{N})$ is linearly independent, we see that
the set $\mathcal{C}(W_L^\varepsilon(B),B,\mathcal{N})$ is linearly independent.
By Corollary \ref{sec:span}, we have the theorem.
\end{proof}

Let $L'$ be an abelian subgroup of $\mathfrak{h'}$ containing $L$ with a $2$-cocycle $\varepsilon$ on $L'$
extending the $2$-cocycle $\varepsilon$ on $L$.
Let $\lambda$ be an element of  $L'$.
Put $W(\lambda)=W_L^\varepsilon(B;\lambda)$.
Consider the subset $\mathcal{C}(W(\lambda),B,\mathcal{N};\lambda)\subset W(\lambda)$ consisting of the elements
\[
\Psi(e^\lambda,-1;e^{\beta_1},m_1;\cdots;e^{\beta_k},m_k)=\prod_{i=1}^k \left(  e^{\beta_i}(m_i-{\textstyle \sum_{j=1}^{i-1}}(\beta_i|\beta_j)-(\beta_i|\lambda))\right) e^\lambda
\]
with a non-negative integer $k\geq 0$, elements $\beta_1,\ldots,\beta_k\in B$ with $\beta_1\leq \cdots \leq \beta_k$ and negative integers $m_1,\ldots,m_k<0$ such that $m_i\geq m_j$ if $i<j$ and $\beta_i=\beta_j$.
Here, we extend $\mathcal{N}$ to the set $B\cup \{\lambda\}$ by $\mathcal{N}(a,b)=-(a|b)$ ($a,b\in B\cup \{\lambda\}$) and define $\Psi(e^\lambda,m_0;e^{\beta_1},m_1;\cdots;e^{\beta_k},m_k)$ as section \ref{sec:secspan} ($m_0,m_1,\ldots,m_k\in \mathbb{C}$).

\begin{cor}\label{sec:corbasis2}
The set $\mathcal{C}(W(\lambda),B,\mathcal{N};\lambda)$ is a basis of $W(\lambda)$.
\end{cor}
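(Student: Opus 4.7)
The plan is to prove spanning and linear independence separately, with the latter reducing cleanly to Theorem \ref{sec:basis} via a simple current operator.

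For spanning, I would mimic the combinatorial argument of Section \ref{sec:secspan} and Corollary \ref{sec:span}, but applied to the cyclic $W_L$-module $W(\lambda)=W_L\cdot e^\lambda$, with $e^\lambda$ playing the role that $|0\rangle$ played there. By associativity of the GVA together with Theorem \ref{sec:basis} for $W_L$, every vector in $W(\lambda)$ is a linear combination of expressions of the form $e^{\beta_k}(n_k)\cdots e^{\beta_1}(n_1)\cdot e^\lambda$ with $\beta_i\in B$ and $n_i\in \mathbb{C}$. The key observation is that $e^\beta(n)e^\lambda=0$ whenever $n>-1-(\beta|\lambda)$, since $Y(e^\beta,z)e^\lambda \in V_{L'}[[z]]\cdot z^{(\beta|\lambda)}$. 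This shifted vacuum-like bound is precisely what the additional $-(\beta_i|\lambda)$ term in the definition of $\mathcal{C}(W(\lambda),B,\mathcal{N};\lambda)$ encodes, so the reduction arguments of Section \ref{sec:secspan} (repeated applications of the locality identity (\ref{eqn:local3}) to sort the generators and lower mode indices) go through in the cyclic-module setting verbatim, with $e^\lambda$ treated as the new ``vacuum.''

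For linear independence, I would use the simple current operator $e_{-\lambda}:V_{L'}\to V_{L'}$ (the inverse of the operator $e_\lambda$ considered in the proof of Proposition \ref{sec:basistrivial}). This is a linear bijection of vector spaces satisfying the commutation relation $e_{-\lambda}\circ e^\alpha(m)=e^\alpha(m+(\lambda|\alpha))\circ e_{-\lambda}$ in the trivial-cocycle case, and the same up to nonzero scalar factors in the $\varepsilon$-modified case (which do not affect the argument, exactly as in the proof of Theorem \ref{sec:basis}). Applying $e_{-\lambda}$ to a generic element $\Psi(e^\lambda,-1;e^{\beta_1},m_1;\ldots;e^{\beta_k},m_k)$ of $\mathcal{C}(W(\lambda),B,\mathcal{N};\lambda)$, the operator commutes past each factor $e^{\beta_i}(\cdot)$, absorbing precisely the $-(\beta_i|\lambda)$ shifts, and finally sends $e^\lambda$ to $|0\rangle$; the result is a nonzero scalar multiple of $\Psi(e^{\beta_1},m_1;\ldots;e^{\beta_k},m_k)\in\mathcal{C}(W_L^\varepsilon(B),B,\mathcal{N})$. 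Thus $e_{-\lambda}$ carries $\mathcal{C}(W(\lambda),B,\mathcal{N};\lambda)$ bijectively onto a rescaling of $\mathcal{C}(W_L^\varepsilon(B),B,\mathcal{N})$; since the latter is linearly independent by Theorem \ref{sec:basis} and $e_{-\lambda}$ is injective as a linear map, so is the former.

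The main obstacle is the spanning step: one must carefully check that the combinatorial reduction of Section \ref{sec:secspan} extends to the cyclic-module setting, tracking that the extra shift $-(\beta_i|\lambda)$ remains consistent under the rearrangements coming from locality. By contrast, once the bijective behaviour of $e_{-\lambda}$ on the combinatorial sets is verified, linear independence follows immediately from Theorem \ref{sec:basis} by transport along $e_{-\lambda}$.
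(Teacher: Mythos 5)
Your proposal is correct, but it splits the work in a way the paper does not. The paper's entire proof is the transport along the simple current operator: $e_\lambda$ is a linear injection of $W_L$ into $V_{L'}$ whose image is exactly $W_L(\lambda)=W_L\cdot e^\lambda$ (every monomial $e^{\beta_k}(n_k)\cdots e^{\beta_1}(n_1)e^\lambda$ equals $e_\lambda$ applied to a shifted monomial in $W_L$, by the commutation relation (\ref{eqn:comms})), and under this bijection the basis $\mathcal{C}(W_L^\varepsilon(B),B,\mathcal{N})$ of Theorem \ref{sec:basis} is carried precisely onto (nonzero multiples of) $\mathcal{C}(W(\lambda),B,\mathcal{N};\lambda)$. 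A bijective linear map sends a basis to a basis, so \emph{both} spanning and linear independence come out of this single observation. Your linear-independence argument via $e_{-\lambda}$ is exactly this mechanism run in the opposite direction and is fine, including the treatment of the $\varepsilon$-modification by nonzero scalars. Where you diverge is the spanning step: you propose to re-run the combinatorial reduction of Section \ref{sec:secspan} inside the cyclic module with $e^\lambda$ as a shifted vacuum. That argument does go through (your bound $e^\beta(n)e^\lambda=0$ for $n>-1-(\beta|\lambda)$ is correct and matches the shifts in the definition of $\mathcal{C}(W(\lambda),B,\mathcal{N};\lambda)$), but it is unnecessary labour: the "main obstacle" you identify --- checking that the rearrangement arguments survive the extra $-(\beta_i|\lambda)$ shifts --- evaporates once you notice that $e_\lambda$ is surjective onto $W(\lambda)$ and already carries a spanning set to a spanning set. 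In short, your route is a valid but longer path; the paper's buys both halves of the statement from the single fact that $e_\lambda$ is a bijection intertwining the mode operators up to the recorded shift.
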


\begin{proof}
By Theorem \ref{sec:basis} and using the simple current operator $e_{\lambda}$, we have the corollary.
\end{proof}

This is the generalization of the results of \cite{MP}.

\subsection{Structure of the free generalized vertex algebras and generalized principal subspaces}
Let $B$ be a set.
Let $Q=Q_B=\bigoplus_{a\in B}\mathbb{Z}e(a)$ be a free abelian group with a $\mathbb{Z}$-basis $\{e(a)\}_{a\in B}$ indexed by $B$.
Let $\mathcal{N}:B\times B\rightarrow \mathbb{C}$ be a symmetric function,
 and extend it bilinearly to $Q\times Q\rightarrow \mathbb{C}$.
Let  $\eta:Q\times Q\rightarrow \mathbb{C}^\times$ be a bimultiplicative function
with  $\eta(\alpha,\alpha)=e^{-\pi i \mathcal{N}(\alpha,\alpha)}$.
Consider the free generalized vertex algebra $(F(B,\mathcal{N},\eta),Q,\Delta,\eta)$,
where $\Delta:Q\times Q\rightarrow \mathbb{C}/\mathbb{Z}$ is defined by $\Delta(\alpha,\beta)=\mathcal{N}(\alpha,\beta)+\mathbb{Z}$.

Define the symmetric bilinear form $(\cdot|\cdot):Q\times Q\rightarrow \mathbb{C}$ by $(\alpha|\beta)=-\mathcal{N}(\alpha,\beta)$.
Set $\mathfrak{h}=Q\otimes_{\mathbb{Z}}\mathbb{C}$.
Extend to $\mathfrak{h}$ the bilinear form $(\cdot|\cdot)$ on $Q$.
Consider the generalized vertex algebra $V_{\mathfrak{h}}$.
Then, $\mathcal{N}(\alpha,\beta)$ is a locality bound of $e^\alpha, e^\beta\in V_\mathfrak{h}$ ($\alpha,\beta\in Q$).

In the sequel, we denote $a=e(a)$ for $a\in B$.

Let $\alpha,\beta$ be elements of $Q$.
Set $\omega(\alpha,\beta)=\eta(\alpha,\beta)e^{\pi i \mathcal{N}(\alpha,\beta)}$.
Then $\omega:Q\times Q\rightarrow \mathbb{C}^\times$ is bimultiplicative.
Since $\eta(\alpha,\alpha)=e^{-\pi i \mathcal{N} (\alpha,\alpha)}$, we have $\omega(\alpha,\alpha)=1$.
By the well-known result, we obtain the $2$-cocycle $\varepsilon:Q\times Q\rightarrow \mathbb{C}^\times$
with $\varepsilon(\alpha,\beta)\varepsilon(\beta,\alpha)^{-1}=\omega(\alpha,\beta)$.
Consider the $\varepsilon$-modified  lattice GVA $V_Q=V_Q^\varepsilon$ and the principal subspace $W_Q^\varepsilon(B)$.
Then, we have the projection $\pi:F(B,\mathcal{N},\eta)\rightarrow W_Q^\varepsilon(B)$ with $\pi(a)=e^a$ for each $a\in B$.

\begin{thm}\label{sec:thmisomorphic}
The free generalized vertex algebra $F(B,\mathcal{N},\eta)$ is isomorphic to the generalized principal subspace $W_Q^\varepsilon(B)$.
\end{thm}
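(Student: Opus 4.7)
The strategy is to combine the universal property of the free GVA established in Theorem \ref{sec:thmuniv} with a comparison of spanning sets on the two sides. The target $W_Q^\varepsilon(B)$ is generated as a GVA by $\{e^a \mid a \in B\}$, its underlying charge group is $Q = Q_B$, the bilinear form $\Delta$ inherited from the lattice structure agrees with $\mathcal{N} + \mathbb{Z}$, and the $2$-cocycle $\varepsilon$ was constructed from $\omega(\alpha,\beta) = \eta(\alpha,\beta) e^{\pi i \mathcal{N}(\alpha,\beta)}$ precisely so that the locality factor of $V_Q^\varepsilon$ between $e^\alpha$ and $e^\beta$ becomes $\eta(\alpha,\beta)$. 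Theorem \ref{sec:thmuniv} then applies with $\rho = \mathrm{id}_{Q_B}$ and $i(a) = e^a$, producing the surjective GVA homomorphism $\pi: F(B,\mathcal{N},\eta) \to W_Q^\varepsilon(B)$ already singled out before the theorem.

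For injectivity, I would compare spanning sets. Corollary \ref{sec:span}, applied to $F$ with generating set $B$ and locality bound $\mathcal{N}$, shows that $\mathcal{C}(F, B, \mathcal{N})$ spans $F$. On the lattice side, Theorem \ref{sec:basis} asserts that the parallel set $\mathcal{C}(W_Q^\varepsilon(B), B, \mathcal{N})$ is in fact a \emph{basis} of $W_Q^\varepsilon(B)$. Because $\pi$ is a GVA homomorphism sending $a \mapsto e^a$, it commutes with the recursive formation of the monomials $\Psi(\cdots)$, so it induces a label-preserving bijection between these two sets, taking $\Psi(a_1, m_1; \ldots; a_k, m_k) \in F$ to $\Psi(e^{a_1}, m_1; \ldots; e^{a_k}, m_k) \in W_Q^\varepsilon(B)$. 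Any $v \in \ker \pi$ then expands as $v = \sum_i c_i v_i$ with $v_i \in \mathcal{C}(F, B, \mathcal{N})$; applying $\pi$ gives $\sum_i c_i \pi(v_i) = 0$, and the linear independence half of Theorem \ref{sec:basis} forces all $c_i = 0$. Together with the surjectivity from Theorem \ref{sec:thmuniv}, this proves that $\pi$ is an isomorphism of GVAs.

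The substantive work has already been done in Theorem \ref{sec:basis}; the present theorem is a short synthesis. The only delicate bookkeeping point is the $\eta$-compatibility hypothesis of Theorem \ref{sec:thmuniv}, namely confirming that the $\varepsilon$-modification of $V_Q$ really promotes the natural $e^{\pi i \mathcal{N}(\alpha,\beta)}$ locality factor to the prescribed $\eta(\alpha,\beta)$ via the defining identity $\varepsilon(\alpha,\beta)\varepsilon(\beta,\alpha)^{-1} = \omega(\alpha,\beta)$. This is a direct computation with the $\varepsilon$-modified vertex operators rather than a conceptual obstacle, and once it is in place the two-line comparison of spanning sets closes the argument.
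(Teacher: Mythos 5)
Your proposal is correct and follows essentially the same route as the paper: obtain the surjection $\pi$ from the universal property (Theorem \ref{sec:thmuniv}), note that $\pi$ carries the spanning set $\mathcal{C}(F,B,\mathcal{N})$ of Corollary \ref{sec:span} onto the set $\mathcal{C}(W_Q^\varepsilon(B),B,\mathcal{N})$, and invoke Theorem \ref{sec:basis} to conclude that this image is a basis, forcing $\pi$ to be injective. Your explicit kernel argument is just a slightly more spelled-out version of the paper's one-line ``since $\mathcal{C}(W,B,\mathcal{N})$ is a basis, we have the inverse map.''
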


\begin{proof}
Put $F=F(B,\mathcal{N},\eta)$ and $W=W_Q^\varepsilon(B)$.
Consider a total order on $B$ and spanning sets $\mathcal{C}(F,B,\mathcal{N})$ and $\mathcal{C}(W,B,\mathcal{N})$.
Consider the surjection $\pi:F \rightarrow W$.
By the construction, the image of the spanning set $\mathcal{C}(F,B,\mathcal{N})$ agrees with $\mathcal{C}(W,B,\mathcal{N})$.
Since $\mathcal{C}(W,B,\mathcal{N})$ is a basis, we have the inverse map $W\rightarrow F$.
Thus, we have the corollary.
\end{proof}

This is the generalization of the results of \cite{R}.
Note that another kind of PBW-type bases for the free vertex algebras is given in \cite{R}.
(However, we do not generalize the bases in this paper.)

\section{Presentation of lattice vertex algebras in terms of generators and relations}

Let $L$ be a lattice with a symmetric $\mathbb{Z}$-bilinear form $(\cdot|\cdot):L\times L\rightarrow \mathbb{C}$.
Let $\varepsilon:L\times L\rightarrow \mathbb{C}^\times$ be a $2$-cocycle.
Denote by $\omega$ the canonical invariant of $\varepsilon$.
Consider the $\varepsilon$-modified generalized vertex algebra $V_L=(V_L)^\varepsilon$.
Let $\Pi$ be a $\mathbb{Z}$-basis of $L$.

Set $B=\{\widetilde{a}|a\in \Pi\} \sqcup \{\widehat{a}|a\in \pm \Pi\}$.
We set $\widetilde{-a}=-\widetilde{a}$.
Define the map $\delta:B\rightarrow L$ by $\delta(\widehat{a})=a$ ($a\in \pm \Pi$) and $\delta(\widetilde{a})=0$ ($a\in \Pi$).
Let $Q_B$ denote the free abelian group generated by $B$.
Extend $\delta$ bilinearly on $Q_B$ and denote $\delta:Q_B\rightarrow L$.
Define a bimultiplicative function $\eta:Q_B\times Q_B\rightarrow \mathbb{C}$
 by $\eta(\alpha,\beta)=e^{\pi i (\delta(\alpha)|\delta(\beta))}\omega(\delta(\alpha),\delta(\beta))$ for
$\alpha,\beta\in Q_B$.
Define $\mathcal{N}:B\times B\rightarrow \mathbb{C}$ by 
\[
\mathcal{N}(v^a,v^b)=-(a|b),\quad \mathcal{N}(\widetilde{a},\widetilde{b})=2,\quad \mathcal{N}(\widetilde{a},v^b)=1, \quad (a,b\in \pm \Pi).
\]
Consider the free GVA $F=F(B,\mathcal{N},\eta)$.
Then we have the projection $F\rightarrow V_L$.

\begin{lem}\label{sec:lemlattice}
The lattice generalized vertex algebra $V_L$ is presented by generators $\widetilde{a}$ ($a\in \Pi$) 
and $\widehat{a}=:v^{ a}$ for $a\in \pm \Pi$
with the locality factor $\eta$ and locality bounds $\mathcal{N}$,
and the following relations ($a,b\in\pm \Pi$):
\begin{enumerate}
\item $\widetilde{a}(0)\widetilde{b}=0,\quad \widetilde{a}(1)\widetilde{b}=(a|b)|0\rangle$,
\item $\widetilde{a}(0) v^b=(a|b)v^b$,
\item $v^a((a|a)-1) v^{-a}=|0\rangle$,
\item $Tv^a =\widetilde{a}(-1) v^a$.
\end{enumerate}
\end{lem}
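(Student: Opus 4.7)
The plan is first to set up the surjection $\pi:F(B,\mathcal{N},\eta)\to V_L$ via the universal property (Theorem \ref{sec:thmuniv}), then to show that its kernel is precisely the GVA-ideal $I\subset F$ generated by the relations (1)--(4). For the surjection I need the locality data to match: under the identifications $\widetilde{a}\mapsto a(-1)|0\rangle$ and $v^a\mapsto e^a$ in $V_L^\varepsilon$, the Heisenberg--Heisenberg fields are local of order $2$, the Heisenberg--vertex fields of order $1$, and the lattice--lattice fields of order $-(a|b)$, matching $\mathcal{N}$; and the sign factor of the $\varepsilon$-modified lattice GVA on these generators is exactly the prescribed $\eta$. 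Relations (1)--(4) are then direct calculations in $V_L$: (1) from $[a(m),b(n)]=m\delta_{m+n,0}(a|b)$, (2) from $a(0)e^b=(a|b)e^b$, (3) from the leading term $Y^\varepsilon(e^a,z)e^{-a}=z^{-(a|a)}|0\rangle+\cdots$ under the normalization $\varepsilon(a,-a)=1$, and (4) from $Te^a=a(-1)e^a$. Thus $\pi$ factors through an induced surjection $\bar\pi:F/I\to V_L$.

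The main step is to show $\bar\pi$ is injective by bounding $F/I$ above and matching it to a PBW basis of $V_L$. By Corollary \ref{sec:span}, $F/I$ is spanned by ordered monomials in the generators $\widetilde{a}(n)$ and $v^a(n)$. Relations (1) together with the Heisenberg locality bound identify the subGVA of $F/I$ generated by $\{\widetilde{a}\}_{a\in\Pi}$ as a quotient of the Heisenberg VOA $M(1)$ of $(\mathfrak{h},(\cdot|\cdot))$. Relation (3), combined with the locality bounds $\mathcal{N}(v^a,v^b)=-(a|b)$ and the Borcherds identities, allows one to define a normalized element $v^\lambda\in F/I$ for every $\lambda\in L$ by iterated products of $v^{\pm a}$ ($a\in\Pi$); the key point is that $v^a\cdot v^{-a}=|0\rangle$ in the appropriate normalization, which furnishes inverses and makes the definition independent of the chosen expression of $\lambda$. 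Relation (2) lets all $\widetilde{a}$-modes be pushed to the left of $v^\lambda$ (producing only scalars), and relation (4) plus translation covariance propagates consistency to higher modes of $v^a$. The upshot is that $F/I$ is spanned by monomials
\begin{equation*}
\widetilde{a}_{i_1}(-n_1)\cdots\widetilde{a}_{i_k}(-n_k)\,v^\lambda,\qquad n_j\geq 1,\ a_{i_j}\in\Pi,\ \lambda\in L,
\end{equation*}
which map under $\bar\pi$ onto the standard PBW basis $a_{i_1}(-n_1)\cdots a_{i_k}(-n_k)e^\lambda$ of $V_L=M(1)\otimes\mathbb{C}[L]$. The known linear independence of this basis forces $\bar\pi$ to be an isomorphism.

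The main obstacle is the bookkeeping around the group-like elements $v^\lambda$: one must verify that the definition is well-posed independently of the representation of $\lambda\in L$ as an integer combination of elements of $\Pi$, and that the $v^\lambda$'s interact with both the Heisenberg generators and the $\varepsilon$-cocycle in the way predicted by $V_L^\varepsilon$. This reduces to unpacking the Borcherds identity for $v^a,v^b$ with the specified $\eta$ (whose invariant under $\omega(\alpha,\beta)=\eta(\alpha,\beta)e^{\pi i\mathcal{N}(\alpha,\beta)}$ encodes the commutator of $\varepsilon$) and checking associativity of the normalized products, the only genuinely computational step in the argument.
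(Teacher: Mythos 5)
Your proposal is correct and follows essentially the same route as the paper: both construct the surjection $F/I\to V_L$ from universality, verify relations (1)--(4) hold in $V_L^\varepsilon$, and prove injectivity by showing that $F/I$ decomposes as Heisenberg modules $U(\hat{\mathfrak{h}})v^\lambda$ generated by group-like elements $v^\lambda$ ($\lambda\in L$) mapping onto the Fock-module decomposition $M(1)\otimes e^\lambda$ of $V_L$. The only cosmetic difference is that the paper sidesteps your "well-posedness of $v^\lambda$" concern by fixing a total order on $\Pi$ and defining $v^\lambda$ once and for all as a specific ordered monomial in the spanning set $\mathcal{C}(W,\pm\Pi,\mathcal{N})$, and it phrases injectivity via surjective homomorphisms of highest-weight $\hat{\mathfrak{h}}$-modules rather than via a spanning set hitting a PBW basis.
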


Note that $\delta:Q_B\rightarrow L$ induces the bimultiplicative and bilinear functions $\eta:L\times L\rightarrow \mathbb{C}^\times$
and $\Delta:L\times L\rightarrow \mathbb{C}/\mathbb{Z}$ on $L$ from $\eta$ and $\Delta$ on $Q_B$.
Then $(L,\Delta,\eta)$ with the induced grading forms a charge factor on $F$.
Let $I$ be an weak GVA-ideal of $F$ generated by the above relations.
Since the above relations are $L$-homogeneous, $I$ is a GVA-ideal.
Consider the quotient GVA $W=F/I$.
Since $V_L$ satisfies the above relations, we have the projection $W\rightarrow V_L$.
We show $W\cong V_L$.

Set $\mathfrak{h}=L\otimes_{\mathbb{Z}} \mathbb{C}$ and bilinearly extend the form $(\cdot|\cdot)$ to $\mathfrak{h}$.
Consider the abelian Lie algebra structure on $\mathfrak{h}$.
Consider the corresponding affine Lie algebra (Heisenberg algebra)
$\hat{\mathfrak{h}}=\mathfrak{h}\otimes \mathbb{C}[t,t^{-1}]\oplus \mathbb{C}c$
with $[x\otimes t^m,y\otimes t^n]=(x|y)m\delta_{m+n,0}c$ for $x,y\in\mathfrak{h}$, $m,n\in\mathbb{Z}$
and $[c,\mathfrak{h}]=0$.
We denote $x\otimes t^m=x(m)$.

\begin{lem}
The Heisenberg algebra $\hat{\mathfrak{h}}$ acts on $W$ by $c v=v$ and $a(m) v=\widetilde{a}(m) v$ for $a\in \Pi\subset \mathfrak{h}$, $m\in\mathbb{C}$ and $v\in W$.
\end{lem}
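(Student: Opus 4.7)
The plan is to verify that the proposed assignments give a well-defined representation of $\hat{\mathfrak{h}}$ by checking the defining relations: since $c$ is to act as the identity, it commutes with everything, so the only nontrivial check is the Heisenberg commutation relation $[a(m), b(n)] = (a|b)\,m\,\delta_{m+n, 0}\,\mathrm{id}$ for $a, b \in \Pi$ and $m, n \in \mathbb{Z}$, after which bilinear extension in $a, b$ yields the relation on all of $\mathfrak{h}$. The main tool will be the Borcherds commutator identity in the GVA $W$. Since $\delta(\widetilde{a}) = \delta(\widetilde{b}) = 0$ in $L$, we have $\eta(\widetilde{a}, \widetilde{b}) = e^{\pi i \cdot 0}\,\omega(0,0) = 1$, so the identity $\mathrm{B}_{m,0,k}(\widetilde{a}, \widetilde{b}) = 0$ from Section~3.3 specialises to the familiar vertex-algebra commutator formula
\[
[\widetilde{a}(m), \widetilde{b}(k)] = \sum_{j \geq 0} \binom{m}{j}\bigl(\widetilde{a}(j)\widetilde{b}\bigr)(m+k-j).
\]

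Next I would compute the singular products $\widetilde{a}(j)\widetilde{b}$ for $j \geq 0$. Relation~(1) of the lemma immediately gives $\widetilde{a}(0)\widetilde{b} = 0$ and $\widetilde{a}(1)\widetilde{b} = (a|b)|0\rangle$. For $j \geq 2$ I would apply the locality identity~(\ref{eqn:local2}) for $Y(\widetilde{a}, z)$ and $Y(\widetilde{b}, w)$ (bound $N = 2$, factor $1$) to the vacuum with $s = j-2$ and $t = -1$; the vacuum-annihilation identities $\widetilde{a}(l)|0\rangle = 0 = \widetilde{b}(l)|0\rangle$ for $l \geq 0$, together with $\widetilde{b}(-1)|0\rangle = \widetilde{b}$, collapse all but one term and yield $\widetilde{a}(j)\widetilde{b} = 0$ for every $j \geq 2$. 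Substituting this data into the commutator formula and invoking the standard identity $|0\rangle(k) = \delta_{k,-1}\,\mathrm{id}$ (a consequence of the vacuum axiom together with translation covariance) gives
\[
[\widetilde{a}(m), \widetilde{b}(n)] = m\,(a|b)\,\delta_{m+n, 0}\,\mathrm{id},
\]
which is the desired relation, with the central element acting as the identity.

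The main obstacle is essentially the bookkeeping in the previous paragraph: showing that the locality bound $\mathcal{N}(\widetilde{a}, \widetilde{b}) = 2$ together with relations~(1) of the lemma is precisely what is needed to kill $\widetilde{a}(j)\widetilde{b}$ for $j \geq 2$, so that the commutator formula truncates at $j = 1$. Along the way one should also verify that each $\widetilde{a}(m)$ is genuinely a well-defined operator on $W$ indexed by $m \in \mathbb{Z}$; this is automatic because $\delta(\widetilde{a}) = 0$ forces $\Delta(\widetilde{a}, \gamma) = 0$ in $\mathbb{C}/\mathbb{Z}$ for every induced $L$-charge $\gamma$ occurring in $W$, so all modes of $Y(\widetilde{a}, z)$ are integer-indexed on charge-homogeneous vectors.
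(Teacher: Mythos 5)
Your proposal is correct and follows essentially the same route as the paper: the paper's proof is the one-line observation that the Borcherds identity with $n=0$ (i.e., the commutator formula), combined with the relations $\widetilde{a}(0)\widetilde{b}=0$ and $\widetilde{a}(1)\widetilde{b}=(a|b)|0\rangle$, yields $[\,\widetilde{a}(m),\widetilde{b}(k)\,]v=m(a|b)\delta_{m+k,0}v$. Your additional checks — that $\eta(\widetilde{a},\widetilde{b})=1$, that $\widetilde{a}(j)\widetilde{b}=0$ for $j\geq 2$ by the locality bound $\mathcal{N}(\widetilde{a},\widetilde{b})=2$, and that only integer modes act — are details the paper leaves implicit, and they are all verified correctly.
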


\begin{proof}
Let $a,b$ be elements of $\Pi$ and $m,k$ be complex numbers. Let $v$ be an element of $U$.
By the Borcherds identity with $n=0$ and $c=v$, we have
$a(m) b(k) v-b(k) a(m) v=m(a|b)\delta_{m+k,0}v$.
Thus, we have the lemma.
\end{proof}

Consider a total order $\geq$ on $\Pi$.
Let $\alpha$ be an element of $L$.
Then $\alpha$ has the form $\alpha=\sum_{i=1}^n s_i b_i$ with elements $b_1,\ldots,b_n\in \Pi$ with
$b_1\leq \cdots \leq b_n$ and signatures $s_1,\ldots,s_n\in \{\pm 1\}$ with $s_i=s_j$ if $b_i=b_j$.
Set
\[
v^\alpha=\Psi(v^{s_1 b_1},-1;\cdots;v^{s_n b_n},-1)=\prod_{i=1}^n \left( v^{s_i b_i}(-1-(b_i|{\textstyle \sum_{j=1}^{i-1}}s_j b_j))\right) |0\rangle.
\]
It belongs to the set $\mathcal{C}(W,\pm \Pi, \mathcal{N})$.
Put $\mathcal{C}=\mathcal{C}(W,\pm \Pi, \mathcal{N})$.
For a monomial $v=v^{a_1}(n_1)\cdots v^{a_m}(n_m)|0\rangle$ with $a_i\in \pm \Pi$ and $n_i\in \mathbb{C}$,
we set $\gamma(v)=a_1+\cdots+a_m\in L$.
Set $\mathcal{C}^\alpha=\{v\in \mathcal{C}|\gamma(v)=\alpha\}$.

\begin{lem}
\begin{enumerate}
\item The subspace spanned by the set $\mathcal{C}^\alpha$ coincides with the submodule $U(\hat{\mathfrak{h}})v^\alpha$.
\item $v^\alpha$ is a highest weight vector of $\hat{\mathfrak{h}}$ with $h(0) v^\alpha= (h|\alpha)v^\alpha$ for $h\in\mathfrak{h}$.
\end{enumerate}
\end{lem}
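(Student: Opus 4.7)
We prove (2) first and then use it to derive (1). The key algebraic ingredient, needed throughout, is the mixed commutation
\[
[\tilde{a}(m),\, v^b(k)] \;=\; (a|b)\, v^b(m+k),\qquad a,b\in \pm\Pi,\; m,k\in \mathbb{C},
\]
which we derive in $W$ from relation (2) by the same Borcherds--identity argument used in the preceding Heisenberg lemma: the locality bound is $\mathcal{N}(\tilde{a},v^b) = 1$, and we bootstrap in $m$ via translation covariance starting from the defining case $\tilde{a}(0)v^b=(a|b)v^b$.

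For (2), we induct on the length $k$ of the product $v^\alpha = v^{s_k b_k}(n_k)\, v^{\alpha'}$ with $\alpha' = \alpha - s_k b_k$. Applying the commutation relation gives
\[
\tilde{a}(m)\, v^\alpha \;=\; (a|s_k b_k)\, v^{s_k b_k}(n_k+m)\, v^{\alpha'} \;+\; v^{s_k b_k}(n_k)\, \tilde{a}(m)\, v^{\alpha'}.
\]
For $m=0$, the two contributions assemble to $(a|s_k b_k + \alpha')\, v^\alpha = (a|\alpha)\, v^\alpha$. For $m\geq 1$, the inductive hypothesis annihilates the second summand, while the first vanishes by the \emph{leading-mode property} $v^{s_k b_k}(l)\, v^{\alpha'} = 0$ for $l > n_k$. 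This latter property is the main technical obstacle: although automatic in $V_L$, it must be extracted in $W$ from the defining relations alone. The plan is to establish it by a parallel induction on the length of $\alpha'$, using relation (3) $v^a((a|a)-1)v^{-a} = |0\rangle$ as the base case and iterated Borcherds identities, together with the field axiom $Y(v^{s_k b_k},z) v^{\alpha'} \in W[[z]]\, z^{-(s_k b_k|\alpha')}$ which bounds the highest nonzero power of $z$.

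For (1), the inclusion $U(\hat{\mathfrak{h}})\, v^\alpha \subseteq \mathbb{C}\text{-span}(\mathcal{C}^\alpha)$ follows once we observe that each $\tilde{a}$ lies in the subGVA of $W$ generated by $\pm\Pi$: relation (3) and the Borcherds identity identify $\tilde{a}$ (up to cocycle) with the mode $v^a((a|a)-2)\, v^{-a}$, so the $\alpha$-component of this subGVA---which by Corollary~\ref{sec:span} equals $\mathbb{C}\text{-span}(\mathcal{C}^\alpha)$---is $\hat{\mathfrak{h}}$-stable and contains $v^\alpha$. The reverse inclusion $\mathbb{C}\text{-span}(\mathcal{C}^\alpha) \subseteq U(\hat{\mathfrak{h}})\, v^\alpha$ proceeds by induction on the $\succ$-order from the proof of Theorem~\ref{sec:basis}: $v^\alpha$ is the $\succ$-maximum among $\mathcal{C}^\alpha$-elements sharing its factor-sequence, and any $\succ$-lower element realizes a mode shift $n_i \mapsto n_i - k_i$ (with $k_i \geq 0$) which one produces by applying $\tilde{a}(-k_i)$ (for suitable $a$ with $(a|s_i b_i) \neq 0$), modulo $\succ$-higher terms that the inductive hypothesis has already handled.
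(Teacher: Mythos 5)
Your starting point is the same as the paper's: the commutation relation $[\widetilde{a}(m),v^b(k)]=(a|b)v^b(m+k)$, obtained from the Borcherds identity with $n=0$ together with relations (1)--(2). (The paper's own proof is only a two-line sketch built on this relation, translation covariance, and relation (4).) Your treatment of part (2) is essentially sound, but you misidentify the ``leading-mode property'' $v^{s_kb_k}(l)v^{\alpha'}=0$ for $l>n_k$ as a new obstacle requiring a fresh induction from relation (3). In the notation of Section 3 that vector is $\Psi(v^{s_1b_1},-1;\cdots;v^{s_kb_k},m_k)$ with $m_k>-1$, so its total degree satisfies $M>-k$ and it vanishes by the spanning-set theorem preceding Corollary \ref{sec:span}, which holds in any GVA generated by a set equipped with a locality bound. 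Your proposed substitute would not suffice on its own: relation (3) pins down a single mode of $Y(v^a,z)v^{-a}$, and the field axiom only kills modes with $\mathrm{Re}(l)\gg 0$, not at the precise threshold you need.

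Part (1) contains two genuine gaps. First, the identification $\widetilde{a}=v^a((a|a)-2)v^{-a}$ in $W$ cannot be extracted from relation (3) and the Borcherds identity alone; it requires relation (4), $Tv^a=\widetilde{a}(-1)v^a$, together with translation covariance (apply $T$ to relation (3) and commute $\widetilde{a}(-1)$ back out). In $F$ the two vectors even have different $Q_B$-charges, and without relation (4) nothing forces $\widetilde{a}$ into the subalgebra generated by $\pm\Pi$ --- this is exactly why the paper's proof cites relation (4). Second, and more seriously, your induction for $\mathbb{C}\text{-span}(\mathcal{C}^\alpha)\subseteq U(\hat{\mathfrak{h}})v^\alpha$ only reaches monomials with the same factor sequence as $v^\alpha$. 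But $\mathcal{C}^\alpha$ is cut out by the $L$-charge $\gamma(v)=a_1+\cdots+a_m$, so it contains monomials with extra cancelling pairs, e.g.\ $v^{b}(\cdot)\,v^{-b}(\cdot)\,|0\rangle\in\mathcal{C}^0$, on which the order $\succ$ (defined only between monomials with identical factor sequences) gives no foothold. Handling these requires showing that the contractions $v^b(l)v^{-b}(l')|0\rangle$ all lie in $U(\hat{\mathfrak{h}})|0\rangle$, again via relations (3), (4) and the commutator; your sketch does not address this. Two smaller points: by the paper's convention $v^\alpha$ is $\prec$ every other element of its factor-sequence class (all its modes equal $-1$, the largest allowed), so it is the $\succ$-minimum rather than the maximum; and the tail terms $\prod_i v^{s_ib_i}(n_i)\,\widetilde{a}(-k)|0\rangle$ produced when you commute $\widetilde{a}(-k)$ all the way to the right are left untreated.
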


\begin{proof}
By the Borcherds identity with $n=0$, we have 
$\widetilde{a}(m) v^b(k)-v^b(k) \widetilde{a}(m)=(a|b)v^b$.
By the translation axiom of GVA and the relation $Tv^a=\widetilde{a}(-1) v^a$, 
we have the lemma.
\end{proof}

\begin{proof}[Proof of Lemma \ref{sec:lemlattice}.]
Note that $\hat{\mathfrak{h}}$ acts on $V_L=\bigoplus_{\alpha\in L} M(1)\otimes e^\alpha$.
By the above lemma, $W=\bigoplus_{\alpha\in L} U(\hat{\mathfrak{h}})v^\alpha$, since $I$ is $L$-graded.
Since $v^\alpha$ is a highest weight vector with weight $h(0) v^\alpha=(h|\alpha)v^\alpha$, and 
the projection $W\rightarrow V_L$ is a surjective $\hat{\mathfrak{h}}$-module homomorphism, we see that the projection is an isomorphism,
which completes the proof.
\end{proof}

\begin{thm}
The lattice generalized vertex algebra $V_L$ is presented by generators $\{v^a|a\in \pm \Pi\}$ with locality bound $\mathcal{N}(a,b)=-(a|b)$ and $\eta(a,b)=e^{\pi i (a|b)}\omega(a,b)$ ($a,b\in\pm \Pi$) and relations $\{v^a((a|a)-1)v^{-a}=\varepsilon(a,-a)|0\rangle |a\in \Pi\}$.
\end{thm}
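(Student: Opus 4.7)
The plan is to apply Lemma \ref{sec:lemlattice} in reverse. Let $F^\circ = F(\pm\Pi,\mathcal{N}^\circ,\eta^\circ)$ with $\mathcal{N}^\circ(a,b)=-(a|b)$ and $\eta^\circ(a,b)=e^{\pi i(a|b)}\omega(a,b)$, and let $J^\circ$ be the GVA ideal generated by the elements $v^a((a|a)-1)v^{-a}-\varepsilon(a,-a)|0\rangle$ for $a\in\Pi$. Since $V_L^\varepsilon$ satisfies this relation, Corollary \ref{sec:propobject} produces a surjective GVA homomorphism $F^\circ\rightarrow V_L$ sending the generator $v^a$ to the lattice element $v^a\in V_L$; this factors through a surjection $\bar\pi:F^\circ/J^\circ\rightarrow V_L$. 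The goal is to show that $\bar\pi$ is an isomorphism by producing an inverse via Lemma \ref{sec:lemlattice}.

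To that end, for each $a\in\Pi$ I would introduce
\[
\widetilde{a}:=-\varepsilon(a,-a)^{-1}\,v^a((a|a)-2)\,v^{-a}\quad\in F^\circ/J^\circ.
\]
This element is chosen so that, under $\bar\pi$, it lands on the Heisenberg vector $a(-1)|0\rangle\in V_L$: indeed, the standard vertex operator formula $Y(e^a,z)e^{-a}=\varepsilon(a,-a)z^{-(a|a)}E^-(-a,z)|0\rangle$ shows that $a(-1)|0\rangle$ is exactly $-\varepsilon(a,-a)^{-1}$ times the coefficient of $z^{-(a|a)+1}$ in this series. I would then verify inside $F^\circ/J^\circ$ each of the four relations of Lemma \ref{sec:lemlattice}, namely the Heisenberg brackets $\widetilde{a}(0)\widetilde{b}=0$ and $\widetilde{a}(1)\widetilde{b}=(a|b)|0\rangle$, the action $\widetilde{a}(0)v^b=(a|b)v^b$, the normalization $v^a((a|a)-1)v^{-a}=|0\rangle$ (after matching conventions for the $\varepsilon$ factor), and the translation relation $Tv^a=\widetilde{a}(-1)v^a$. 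Each left-hand side is rewritten, via the Borcherds identity applied to the composition $v^a(m)v^{-a}(k)v^b(\ell)$, as a sum of products involving only $v^{\pm a}$ and $v^b$, which can then be collapsed using the defining relation of $J^\circ$ and the translation covariance $[T,Y(v^a,z)]=\partial_zY(v^a,z)$. In parallel, one must check that the locality bounds $\mathcal{N}(\widetilde{a},\widetilde{b})=2$ and $\mathcal{N}(\widetilde{a},v^b)=1$ demanded by Lemma \ref{sec:lemlattice} are actually realized by the elements $\widetilde{a}$ so defined, which again reduces to the Borcherds identity together with the locality between $Y(v^a,z)$ and $Y(v^{-a},z)$.

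Once all relations and locality bounds are in place, Lemma \ref{sec:lemlattice} supplies a surjective GVA homomorphism $V_L\rightarrow F^\circ/J^\circ$ sending each $\widetilde{a}$ and $v^a$ to the distinguished generators just constructed. This homomorphism is inverse to $\bar\pi$ by construction, so $\bar\pi$ is an isomorphism and the theorem follows.

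The main obstacle I expect is the Heisenberg part: deriving $\widetilde{a}(0)\widetilde{b}=0$ and $\widetilde{a}(1)\widetilde{b}=(a|b)|0\rangle$ purely from the single normalizing relation generating $J^\circ$. Since $\widetilde{a}$ is itself a double product of $v^{\pm a}$'s, these brackets become iterated Borcherds identities of depth four in the generators, and the needed cancellation relies on a careful bookkeeping of the signs and cocycle factors encoded in $\eta^\circ$ and $\varepsilon$. Verifying $Tv^a=\widetilde{a}(-1)v^a$ is the other nontrivial check, requiring comparison of the commutator $[T,Y(v^a,z)]=\partial_z Y(v^a,z)$ with the formula defining $\widetilde{a}$ at level $z^{-(a|a)+1}$.
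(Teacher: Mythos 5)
Your proposal follows essentially the same route as the paper's proof: the paper works in the same free GVA, sets $K^a=X^a((a|a)-1)X^{-a}$ and $H^a=X^a((a|a)-2)X^{-a}$ (your $\widetilde{a}$ up to normalization), derives from the locality relation the identities $H^a(0)H^b=(a|b)K^a(-2)K^b$, $H^a(1)H^b=(a|b)K^a(-1)K^b$, $H^a(0)X^b=(a|b)K^a(-1)X^b$ and $H^a(-1)X^a=X^a(-2)K^a+(a|a)K^a(-2)X^a$, and then quotients by $K^a=\varepsilon(a,-a)|0\rangle$ so that Lemma \ref{sec:lemlattice} yields the isomorphism. Your plan is correct and matches this; the only detail to recheck is the sign in your normalization $-\varepsilon(a,-a)^{-1}$, which depends on the convention for $E^-$.
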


\begin{proof}
Consider a set of generators $B=\{X^a|a\in \pm\Pi\}$ with the symmetric map $\mathcal{N}(X^a,X^b)=-(a|b)$ and bimultiplicative map
$\eta(X^a,X^b)=e^{\pi i (a|b)}\omega(a,b)$ ($a,b\in\pm \Pi$).
Consider the corresponding free generalized vertex algebra $F=F_{\mathcal{N},\eta}(B)$.
Then by the universality of $F$, we have the projection $\pi:F\rightarrow V_L$ with $X^a\mapsto v^a$.
For $a\in \pm \Pi$ denote $K^a=X^a((a|a)-1) X^{-a}$, $H^a=X^a((a|a)-2)X^{-a}$.
By eq.\ (\ref{eqn:local2}), we have
$
H^a(0) H^b=(a|b)K^a(-2) K^b,
$
$H^a(1) H^b=(a|b)K^a(-1)K^b$,
$H^a(0) X^b=(a|b)K^a(-1)X^b$,
$H^a(-1)X^a=X^a(-2)K^a+(a|a)K^a(-2)X^a$.
Let $J\subset F$ denote the ideal generated by the relations $K^a=\varepsilon(a,-a)|0\rangle$ ($a\in \Pi$).
Then by the above equalities and Lemma \ref{sec:lemlattice}, $F/J\cong V_L$, which completes the proof.
\end{proof}

This is the generalization of the result of \cite{R}.

\section{The properties of the generalized principal subspaces}

Since the free generalized vertex algebras are isomorphic to generalized principal subspaces,
we study the generalized principal subspaces.

Let $l$ be a positive integer.
Let $\mathfrak{h}$ be a $l$-dimensional vector space over $\mathbb{C}$ equipped with a bilinear form
$(\cdot|\cdot): \mathfrak{h}\times \mathfrak{h} \rightarrow \mathbb{C}$.
Let $\varepsilon: \mathfrak{h}\times \mathfrak{h}\rightarrow \mathbb{C}^\times$
be a $2$-cocycle.
Let $L\subset \mathfrak{h}$ be a lattice equipped with 
a $\mathbb{Z}$-basis $B\subset \mathfrak{h}$.
Assume that $B$ is also a $\mathbb{C}$-basis of $\mathfrak{h}$, so that
$\mathfrak{h}\cong \mathbb{C}\otimes_{\mathbb{Z}}L$.
Consider the ($\varepsilon$-modified) generalized vertex algebra
$
V_\mathfrak{h}=M(1) \otimes_{\mathbb{C}} \mathbb{C}[\mathfrak{h}]
$
associated to the vector space $\mathfrak{h}$.
Let $\lambda$ be an element of $L^\circ$.
Consider the generalized principal subspaces $W_L=W_L(B)$ and $W_L(\lambda)=W_L(B,\lambda)$.

\subsection{Graded dimensions of the generalized principal subspaces.}

Recall the charge-gradings and weight-gradings of the generalized principal subspaces.
Note that they are compatible.

\begin{dfn}
The {\it graded dimension} $\chi_{W_L(\lambda)}$ of $W_L(\lambda)$ is
\[
\chi_{W_L(\lambda)}(\bm{x};q)= \sum_{\alpha \in \mathfrak{h},n\in \mathbb{C}} \dim_\mathbb{C} \left( \left(W_L(\lambda)^\alpha\right)_n\right) q^n x_1^{i_1}\cdots x_l^{i_l},
\]
where $i_1\beta_1+\cdots+i_l\beta_l=\alpha$.
\end{dfn}

To compute the graded dimension, consider the symbols
\[
(q)_k=(q; q)_k=(1-q)\cdots(1-q^k), \ \ \  (k\geq 1),
\]
$(q)_0=1$,
and
$(q)_{\infty}=\prod_{i=1}^\infty (1-q^i)$.
Here $
(a;q)_k=(1-a)(1-aq)\cdots (1-aq^{k-1})
$
is the $q$-Pochhammer symbol.
Recall that $1/(q)_k$ agrees with the generating function of the partitions into parts not greater than $k$,
 therefore agrees with the generating function of the partitions into at most $k$ parts.
Recall further that
\begin{equation*}
\frac{1}{(q)_{\infty}} = \frac{1}{\varphi(q)} = \sum_{k=0} ^{\infty} p(k) q^k.
\end{equation*}
Here, $\varphi(q)$ is the Euler function and $p(k)$ is the number of the un-restricted partitions of an integer $k$.
Consider the Gram matrix $A=\left(( \beta_i | \beta_j )\right)_{i,j}$.
Since $\lambda\in \mathfrak{h}$, $\lambda$ has the form $j_1\beta_1+\cdots +j_l\beta_l$ with $j_1,\ldots, j_l \in \mathbb{C}$.
Put $\bm{j}=(j_1,\ldots,j_l)$.

\begin{thm} \label{sec:thmcharacter}
The graded dimension of $W_L(\lambda)$ is given by
\[
\chi _{W_L(\lambda) } (\bm{x};q) 
= \sum_{i_1,\ldots,i_l \geq 0}
\frac{q^{ \frac{ (\bm{i}+\bm{j}) \cdot A \cdot (\bm{i}+\bm{j}) } {2} } } {  (q)_{i_1} \cdots (q)_{i_l}} 
x_1^{i_1} \cdots x_l^{i_l},
\]
where $\bm{i}=(i_1,\ldots,i_l)$.
\end{thm}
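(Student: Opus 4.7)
The plan is to count the explicit basis elements of $W_L(\lambda)$ given in Corollary \ref{sec:corbasis2} according to weight and charge. Since $\mathcal{C}(W(\lambda),B,\mathcal{N};\lambda)$ is a basis, organizing its elements by the multiplicities of each $\beta_s\in B$ will produce the sum over $(i_1,\ldots,i_l)\in\mathbb{Z}_+^l$ appearing in the statement, and the key is to identify the numerator and denominator of each summand as a \emph{ground-state weight} and a \emph{partition generating function}, respectively.

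First I would fix a multi-index $\bm{i}=(i_1,\ldots,i_l)$ and consider the subset of the basis with charge $\alpha=\sum_s i_s\beta_s$. Every such basis element has the form
\[
\Psi(e^\lambda,-1;e^{\beta_{n_1}},m_1;\cdots;e^{\beta_{n_k}},m_k),
\qquad k=i_1+\cdots+i_l,
\]
where the indices $n_1\le\cdots\le n_k$ are determined by $\bm{i}$ (the value $s$ appears $i_s$ times) and the $m_r$ are negative integers that are non-increasing within each maximal block of equal $\beta_{n_r}$'s. I would then compute the weight of this vector by repeatedly applying the rule $a(n):(V_L)_m\to (V_L)_{m+\mathrm{wt}(a)-n-1}$, starting from $e^\lambda$ of weight $(\lambda|\lambda)/2$. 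Summing the telescoping contributions and using the identities
\[
\sum_{i=1}^{k}\frac{(\beta_{n_i}|\beta_{n_i})}{2}+\sum_{i=1}^{k}\sum_{j<i}(\beta_{n_i}|\beta_{n_j})=\frac{(\alpha|\alpha)}{2},
\qquad
\sum_{i=1}^{k}(\beta_{n_i}|\lambda)=(\alpha|\lambda),
\]
the weight collapses to
\[
\frac{(\alpha+\lambda|\alpha+\lambda)}{2}+\sum_{i=1}^{k}(-m_i-1)=\frac{(\bm i+\bm j)\cdot A\cdot (\bm i+\bm j)}{2}+\sum_{i=1}^{k}(-m_i-1).
\]

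Next I would exploit the block structure. Setting $n_{s,r}=-m-1$ for the $r$-th exponent in the block corresponding to $\beta_s$, the constraints $m\le -1$ and the monotonicity $m_i\ge m_{i+1}$ within a block become exactly the statement that $(n_{s,1},\ldots,n_{s,i_s})$ is a weakly increasing sequence of non-negative integers, i.e.\ a partition with at most $i_s$ parts. Since the $q$-generating function for partitions into at most $i_s$ parts is $1/(q)_{i_s}$, and since the blocks contribute independently, the generating function of the extra summand $\sum_i(-m_i-1)$ over the part of the basis with charge $\alpha=\sum_s i_s\beta_s$ is exactly
\[
\frac{1}{(q)_{i_1}\cdots(q)_{i_l}}.
\]
Tagging each block of size $i_s$ with the charge monomial $x_s^{i_s}$ and summing over $\bm i\in\mathbb{Z}_+^l$ yields the asserted formula.

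The only delicate step is the weight computation: one has to be careful that the artificial shift $-\sum_{j<i}(\beta_i|\beta_j)-(\beta_i|\lambda)$ built into the definition of $\Psi$ combines with the actual weights $(\beta_i|\beta_i)/2$ of the fields $e^{\beta_i}$ and the ground-state weight $(\lambda|\lambda)/2$ to give the perfect square $\frac12(\bm i+\bm j)\cdot A\cdot(\bm i+\bm j)$; once this is verified the rest is bookkeeping. Linear independence and spanning of $\mathcal{C}(W(\lambda),B,\mathcal{N};\lambda)$ are already guaranteed by Corollary \ref{sec:corbasis2}, so no additional structural work about $W_L(\lambda)$ is required.
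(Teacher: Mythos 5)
Your proposal is correct and follows essentially the same route as the paper: the paper's (much terser) proof likewise invokes Corollary \ref{sec:corbasis2} together with the identity $\mathrm{wt}(e^{\alpha+\lambda})=\tfrac{1}{2}(\bm i+\bm j)\cdot A\cdot(\bm i+\bm j)$, leaving the partition-counting via $1/(q)_{i_1}\cdots(q)_{i_l}$ implicit in the preceding discussion of $q$-Pochhammer symbols. Your version simply makes the weight telescoping and the block-by-block identification with partitions into at most $i_s$ parts explicit, which is the bookkeeping the paper omits.
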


\begin{proof}
The assertion follows from  Corollary \ref{sec:corbasis2}
and the relation
\[
{\rm wt}(e^{\alpha+\lambda})=\frac{ ( \alpha+\lambda | \alpha+\lambda )} {2} = \frac{ (\bm{i}+\bm{j}) \cdot A \cdot (\bm{i}+\bm{j}) } {2},
\]
where $\alpha=i_1\beta_1+\cdots +i_l\beta_l$.
\end{proof}

\subsection{Duality of the principal subspaces.}

We prove the duality theorem of the principal subspaces.
Assume that the bilinear form $(\cdot|\cdot)$ is non-degenerate.
Then there exists $\beta^\circ_1,\ldots,\beta^\circ_l \in \mathfrak{h}$ such that
\[
(\beta^\circ_i|\beta_j)=\delta_{ij},
\]
where $\delta_{ij}$ is the Dirac delta function.
Consider the dual lattice $L^\circ$ with the dual $\mathbb{Z}$-basis $B^\circ=\{ \beta^\circ_1,\ldots,\beta^\circ_l \}$.

Now, we define the notion of the commuteness of generalized vertex algebras.
Let $V$ be a generalized vertex algebra and $S$ a subset of $V$.
\begin{dfn}
The {\it commutant} of $S$ in $V$ is the vector space
\[
\mathrm{Com} (S,V)=\{ v\in V | v(n)s=0 \,\,\, \mbox{for} \,\, s\in S \,\, \mbox{and} \,\, n >-1 \}.
\]
\end{dfn}

Furthermore, we define the notion of the invariant subspaces.
Let $A$ be a vector subspace of a module of $V$.
\begin{dfn}(cf. \cite{LiL})
The {\it invariant subspace} of $S$ inside $A$ is 
\[
A^{S_+}=\{ v\in A| s(n)v=0 \,\,\, \mbox{for} \,\, s\in S \,\, \mbox{and} \,\, n >-1 \}.
\]
\end{dfn}
When $A=V$, the vector space $V^{S_+}$ agrees with $\mathrm{Com} (S,V)$.

The following theorem is our main result.

\begin{thm}\label{sec:thmdual1}
The invariant subspace $(V_L)^{(W_{L^\circ})_+}$ of the generalized principal subspace $W_{L^\circ}$ inside $V_L$ coincides with the generalized principal subspace $W_L$.
\end{thm}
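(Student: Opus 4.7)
The plan is to prove both inclusions $W_L \subset (V_L)^{(W_{L^\circ})_+}$ and $(V_L)^{(W_{L^\circ})_+} \subset W_L$ separately, carrying out mode computations inside the ambient lattice GVA $V_{L+L^\circ} \subset V_\mathfrak{h}$. First I reduce the invariance condition to the generators $\{e^{\beta_i^\circ}\}$ of $W_{L^\circ}$: by iterating the Borcherds identity in the GVA setting, if the non-negative modes of each generator annihilate $v$, then so do those of each iterated product, and the spanning result of Theorem \ref{sec:basis} (applied to $W_{L^\circ}$) propagates the invariance to all of $W_{L^\circ}$.

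For $W_L \subset (V_L)^{(W_{L^\circ})_+}$, I would compute $Y(e^{\beta_i^\circ},z)v$ for a basis monomial $v = P \otimes e^\alpha \in \mathcal{C}(W_L,B,\mathcal{N})$ via the $E^\pm$-factorization of the lattice vertex operator, yielding $Y(e^{\beta_i^\circ},z)v = \varepsilon\, z^{a_i}\, E^-(-\beta_i^\circ,z)\,\tilde{P}(z)\, e^{\alpha+\beta_i^\circ}$, where $\tilde{P}(z)$ is obtained from $P$ by the substitution $\beta_i(-m)\mapsto\beta_i(-m)-z^{-m}$. The specific structure of $v$ as an iterated product of $e^{\beta_k}(m_k)$'s on $|0\rangle$ forces the $z^{-1}$-degree of $\tilde{P}(z)$ to be at most $a_i$ through non-trivial cancellations, so $z^{a_i} E^- \tilde{P}$ only contains non-negative powers of $z$ and $e^{\beta_i^\circ}(n) v = 0$ for $n \geq 0$. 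These cancellations can be tracked by induction on the length of $v$ using the two-term locality relation $(z-w)Y(e^{\beta_i^\circ},z)Y(e^{\beta_i},w)=\eta(z-w)Y(e^{\beta_i},w)Y(e^{\beta_i^\circ},z)$ when $i = k_j$, together with the clean $\eta$-commutation when $i \neq k_j$.

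For the reverse inclusion, since $e^{\beta_i^\circ}(n)$ shifts the $L$-charge by $\beta_i^\circ$, every $L$-charge component $v_\alpha$ of $v \in (V_L)^{(W_{L^\circ})_+}$ is separately invariant. If $\alpha = \sum a_k\beta_k \notin L_+$, pick $i$ with $a_i < 0$ and write $v_\alpha = P \otimes e^\alpha \neq 0$ with $P \in M(1)$; let $d\geq 0$ be the maximal $\beta_i$-weighted degree occurring in $P$ (counting $\beta_i(-m)$ with weight $m$). Then the lowest power of $z$ in $Y(e^{\beta_i^\circ},z)v_\alpha = \varepsilon z^{a_i} E^-(-\beta_i^\circ,z)\tilde{P}(z) e^{\alpha+\beta_i^\circ}$ is $z^{a_i-d}$, and its coefficient is (up to a nonzero scalar) the maximal-$\beta_i$-weight component of $P$ transported to $M(1) \otimes e^{\alpha+\beta_i^\circ}$, which is nonzero. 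This yields $e^{\beta_i^\circ}(-a_i+d-1) v_\alpha \neq 0$ with $-a_i+d-1 \geq 0$, contradicting invariance, so $v$ is supported on $L_+$. The final task is to identify $(V_L^\alpha)^{(W_{L^\circ})_+}$ with $W_L^\alpha$ for each $\alpha \in L_+$; this is the main obstacle, and I would approach it by matching the fermionic character of $W_L^\alpha$ from Theorem \ref{sec:thmcharacter} against the invariant character of $V_L^\alpha$ via constructing, in analogy with the operators $X_a$ of Proposition \ref{sec:basistrivial} but now built from non-negative modes of elements of $W_{L^\circ}$, a family of dual operators indexed by $\mathcal{C}(W_L^\alpha,B,\mathcal{N})$ that separates these basis vectors and annihilates every element of $V_L^\alpha$ lying outside their span, thereby forcing any invariant in $V_L^\alpha$ to lie in $W_L^\alpha$.
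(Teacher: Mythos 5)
Your first inclusion $W_L\subset (V_L)^{(W_{L^\circ})_+}$ is essentially right (the paper gets it in one line from the $N=0$ locality relation (\ref{eqn:commutation}), which lets every non-negative mode $e^{\beta_i^\circ}(n)$ anticommute past the $e^{\beta_j}(m)$'s and kill $|0\rangle$; your $E^\pm$-factorization with the $(1-w/z)^{\delta_{ij}}$ contraction factors is the same computation in heavier clothing). The reverse inclusion is where the proposal breaks down, in two places. First, in the case $\alpha\notin L_+$ you claim the coefficient of the lowest power $z^{a_i-d}$ of $Y(e^{\beta_i^\circ},z)(P\otimes e^\alpha)$ is the top $\beta_i$-weight component of $P$ ``transported'' and hence nonzero. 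That is false as stated: the coefficient is obtained by replacing \emph{every} $\beta_i(-m)$ in each top-weight monomial by a fixed scalar times $z^{-m}$, and distinct monomials of the same total $\beta_i$-weight collapse onto the same term and can cancel. For example $P=\beta_i(-1)^2+\beta_i(-2)$ (a multiple of the Schur polynomial $S_2(\beta_i)$, i.e.\ of $e^{\beta_i}(-3)e^{-\beta_i}$-type vectors) has vanishing $z^{-2}$-coefficient after the substitution --- this cancellation is exactly what makes your \emph{first} inclusion work, so you cannot simultaneously assume it away here. Second, and more seriously, for $\alpha\in L_+$ you explicitly leave open the identification $(V_L^\alpha)^{(W_{L^\circ})_+}=W_L^\alpha$, which is the actual content of the theorem; the proposed remedy (character matching plus ``dual operators that annihilate everything outside the span of $\mathcal{C}(W_L^\alpha,B,\mathcal{N})$'') presupposes a description of that complement, i.e.\ it is circular, and no independent computation of the invariant character of $V_L^\alpha$ is offered.

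The paper's proof replaces both steps by a single mechanism you do not have: the Feigin--Stoyanovsky type filtration $V_L=\varinjlim_\mu W_L(\mu)$ by the cyclic $W_L$-modules $W_L(\mu)=W_L\cdot e^\mu$, each with the explicit combinatorial basis $\mathcal{C}^{(\mu)}$ of Corollary \ref{sec:corbasis2}, together with the splitting $W_L(\mu)=W_L(\mu)'\oplus W_L(\mu+\beta_1)$ and Lemma \ref{sec:lemmap}: the \emph{single} mode $(e^{\beta_1^\circ})(-(\beta_1^\circ|\mu)-1)$, which is a non-negative mode precisely when $i_1\le -1$, is injective on $W_L(\mu)'$ (it sends basis vectors to distinct basis vectors of $\mathcal{C}^{(\mu+\beta_1^\circ)}$, sidestepping the cancellation problem above) and zero on $W_L(\mu+\beta_1)$. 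Invariance then forces any $v\in W_L(\mu)$ into $W_L(\mu+\beta_1)$, and iterating over the coordinates of $\mu$ drives $v$ into $W_L(0)=W_L$, treating the cases $\alpha\notin L_+$ and $\alpha\in L_+$ uniformly. To repair your argument you would need to supply an analogue of Lemma \ref{sec:lemmap}, i.e.\ an exact determination of the kernel of a suitable non-negative mode of $e^{\beta_i^\circ}$ on a space with a known basis; without it the proposal does not close.
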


Note that, since $(L^\circ)^\circ=L$, we also obtain $(V_{L^\circ})^{(W_L)_+}=W_{L^\circ}$ from the theorem.
To prove the theorem, we use what we call {\it Feigin-Stoyanovsky type filtration}.

Let $\mu$ and $\nu$ be elements of $L$.
Then, $\mu$ and $\nu$ have the form $\mu=i_1\beta_1+\cdots+i_l\beta_l$ and $\nu=j_1\beta_1+\cdots+j_l\beta_l$ 
with $i_n, j_n \in \mathbb{Z}$ ($n=1,\ldots,l$).
We say
\[
\mu \preceq \nu,
\]
if $i_n \geq j_n$ for $n=1,\ldots,l$.
Then $(L,\preceq)$ form a filtered set.
When $\mu\preceq \nu$, we have $W_L(\mu) \subset W_L(\nu)$.
Set $F_L=\{W_L(\mu)|\mu \in L\}$.
Consider the inductive system $(F_L, \subset)$.
Then, the inductive limit $\displaystyle \lim_{\longrightarrow} F_L$ coincides with the whole lattice vertex algebra $V_L$,
since the graded dimensions of the elements of $F_L$ converge to the graded dimension of $V_L$ by Theorem \ref{sec:thmcharacter}.
We call the inductive system {\it Feigin-Stoyanovsky type filtration}.
By Corollary \ref{sec:corbasis2}, we know a basis of $W_L(\mu)$.
Consider the total order $\beta_1\leq \cdots \leq \beta_l$ on $B$ and locality bound $\mathcal{N}(\alpha,\beta)=-(\alpha|\beta)$ on $B$.
We denote $\mathcal{C}(W_L,B,\mathcal{N};\mu)=\mathcal{C}^{(\mu)}$.
Now, consider the subset $\mathcal{C}^{(\mu)}_1$ consisting of the elements
\[
\Psi(e^{\mu},-1;e^{a_1},m_1;\cdots;e^{a_k},m_k)=\prod_{i=1}^k \left( e^{a_i}(m_i-(a_i|\mu+{\textstyle \sum_{j=1}^{i-1}} a_j))\right) e^\mu
\]
with a non-negative integer $k$, elements $a_1,\ldots,a_k\in B$ with $a_1\leq \cdots\leq a_n$ and negative integers
$m_1,\ldots,m_k$ with $m_1\leq -2$ if $a_1=\beta_1$ and $m_i\geq m_j$ if $i<j$ and $a_i=a_j$.

Then, $\mathcal{C}^{(\mu)}=\mathcal{C}^{(\mu)}_1 \sqcup (\varepsilon(\mu,\beta_1)^{-1} \mathcal{C}^{(\mu+\beta_1)}$),
since $e^{\beta_1}(-1-(\beta_1|\mu))e^{\mu}=\varepsilon(\mu,\beta_1)e^{\mu+\beta_1}$.
Here, $cX$ denotes the set $\{cx|x\in X\}$ for a non-zero complex number $c$ and a set $X$.
Set $W_L(\mu)'=\mathbb{C}\mbox{-span} (\mathcal{C}^{(\mu)}_1)$.
Then, $W_L(\mu)=W_L(\mu)' \oplus W_L(\mu+\beta_1)$.

\begin{lem}\label{sec:lemmap}
The linear map
\begin{equation}\label{eqn:inj}
(e^{\beta^\circ_1})(-(\beta^\circ_1|\mu)-1): W_L(\mu)' \longrightarrow V_{\mathfrak{h}}
\end{equation}
is injective. The linear map
\begin{equation}\label{eqn:zero}
(e^{\beta^\circ_1})(-(\beta^\circ_1|\mu)-1): W_L(\mu+\beta_1) \longrightarrow V_{\mathfrak{h}}
\end{equation}
is zero.
\end{lem}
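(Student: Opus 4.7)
My plan is to prove both statements by direct computation using two ingredients: (i) the anticommutation-type identity $e^{\beta^\circ_1}(s)\,e^{\gamma}(t)=\eta(\beta^\circ_1,\gamma)\,e^{\gamma}(t)\,e^{\beta^\circ_1}(s)$ for $\gamma\in B$, which is the locality axiom with the locality bound $N=0$ (available because $(\beta^\circ_1|\beta_i)=\delta_{1i}\in\mathbb{Z}$), so that moving $N := (e^{\beta^\circ_1})(-(\beta^\circ_1|\mu)-1)$ past any $e^{a_i}(t)$ costs only a nonzero scalar and no mode shift; and (ii) the lattice vertex operator formula, which identifies $(e^{\beta^\circ_1})(-(\beta^\circ_1|\alpha)-1)\,e^\alpha=\varepsilon(\beta^\circ_1,\alpha)\,e^{\beta^\circ_1+\alpha}$ as the lowest-$z$-order coefficient of $Y^\varepsilon(e^{\beta^\circ_1},z)\,e^\alpha$.

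For (\ref{eqn:zero}), I would argue by a lowest-order vanishing. Every element of $W_L(\mu+\beta_1)$ is a finite sum of charge-homogeneous vectors of charges of the form $\mu+\beta_1+\alpha$ with $\alpha\in L_+$. On such a charge subspace, $Y^\varepsilon(e^{\beta^\circ_1},z)$ has lowest $z$-power $z^{(\beta^\circ_1\mid\mu+\beta_1+\alpha)}=z^{(\beta^\circ_1|\mu)+1+(\beta^\circ_1|\alpha)}$, which is strictly above $z^{(\beta^\circ_1|\mu)}$ since $(\beta^\circ_1|\alpha)\geq 0$. Because $N$ is by definition the coefficient of $z^{(\beta^\circ_1|\mu)}$, it annihilates every such charge subspace, hence all of $W_L(\mu+\beta_1)$.

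For (\ref{eqn:inj}), I would show that $N$ sends the basis $\mathcal{C}_1^{(\mu)}$ of $W_L(\mu)'$ (up to nonzero scalars) injectively into the basis $\mathcal{C}(W_L,B,\mathcal{N};\mu+\beta^\circ_1)$ of $W_L(\mu+\beta^\circ_1)$ furnished by Corollary \ref{sec:corbasis2}. For $v=\Psi(e^\mu,-1;e^{a_1},m_1;\ldots;e^{a_k},m_k)\in\mathcal{C}_1^{(\mu)}$, commuting $N$ to the right past each $e^{a_i}(\cdots)$ via (i), and then applying (ii) to $e^\mu$, gives
\[
N v \;=\; c\,\Psi\!\bigl(e^{\mu+\beta^\circ_1},-1;\,e^{a_1},m_1+\delta_{a_1,\beta_1};\,\ldots;\,e^{a_k},m_k+\delta_{a_k,\beta_1}\bigr)
\]
for some nonzero scalar $c$; the shift $+\delta_{a_i,\beta_1}$ in each mode appears precisely because the defining shift in $\Psi$ changes by $(a_i|\beta^\circ_1)=\delta_{a_i,\beta_1}$ when the base vector is changed from $e^\mu$ to $e^{\mu+\beta^\circ_1}$. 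The ordering condition $m_i\geq m_j$ whenever $i<j$ and $a_i=a_j$ is preserved by a uniform shift, and the strict negativity of the shifted modes $m_i+\delta_{a_i,\beta_1}$ follows from the definition of $\mathcal{C}_1^{(\mu)}$: if $a_i=\beta_1$ for any $i$, then $a_1=\beta_1$ by the total ordering, whence $m_1\leq -2$ by the defining condition of $\mathcal{C}_1^{(\mu)}$, and then $m_i\leq m_1\leq -2$ gives $m_i+1\leq -1<0$. Thus $Nv$ indeed lies in $\mathcal{C}(W_L,B,\mathcal{N};\mu+\beta^\circ_1)$. Since the index-level map $(a_i,m_i)\mapsto(a_i,m_i+\delta_{a_i,\beta_1})$ is manifestly injective, $N$ sends the basis $\mathcal{C}_1^{(\mu)}$ of $W_L(\mu)'$ to a linearly independent set, proving injectivity.

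The main obstacle is the bookkeeping in (\ref{eqn:inj}): one must verify that the cutoff $m_1\leq -2$ (the sole feature distinguishing $\mathcal{C}_1^{(\mu)}$ from the full basis $\mathcal{C}^{(\mu)}$) is precisely calibrated so that the shifts $+\delta_{a_i,\beta_1}$ never produce a vanishing mode and that the image lands cleanly in the prescribed basis of $W_L(\mu+\beta^\circ_1)$. This is exactly why the decomposition $W_L(\mu)=W_L(\mu)'\oplus W_L(\mu+\beta_1)$ is made in this specific way just before the lemma: the two summands correspond to the kernel and a complement on which $N$ is injective with well-controlled image.
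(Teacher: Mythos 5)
Your argument for the injectivity of (\ref{eqn:inj}) is essentially the paper's own proof: commute $(e^{\beta^\circ_1})(-(\beta^\circ_1|\mu)-1)$ past the modes $e^{a_i}(\cdots)$ using the $N=0$ locality relation (\ref{eqn:commutation}) (only a nonzero scalar, no mode shift), apply $(e^{\beta^\circ_1})(-(\beta^\circ_1|\mu)-1)e^{\mu}=\varepsilon(\beta^\circ_1,\mu)e^{\beta^\circ_1+\mu}$, and re-read the result as the monomial $\Psi(e^{\mu+\beta^\circ_1},-1;e^{a_1},m_1+\delta_{a_1,\beta_1};\cdots;e^{a_k},m_k+\delta_{a_k,\beta_1})$ of $\mathcal{C}^{(\mu+\beta^\circ_1)}$; injectivity then follows from the linear independence of that set (Corollary \ref{sec:corbasis2}). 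Your bookkeeping --- the shift $+\delta_{a_i,\beta_1}$, the preservation of the ordering condition, and the role of the cutoff $m_1\leq -2$ in keeping the shifted modes negative --- is correct and matches the paper.

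The zero-map part, however, has a genuine gap. You assert that on the entire charge subspace of charge $\gamma=\mu+\beta_1+\alpha$ the series $Y^{\varepsilon}(e^{\beta^\circ_1},z)$ is supported in powers $z^{(\beta^\circ_1|\gamma)}$ and higher. That is true only on the bare exponential $e^{\gamma}$. On a general vector $u\otimes e^{\gamma}$ with $u\in M(1)$ of positive degree, the annihilation part $E^{+}(-\beta^\circ_1,z)$ of the lattice vertex operator contributes strictly negative powers of $z$ on top of $z^{(\beta^\circ_1|\gamma)}$; for instance $Y(e^{\lambda},z)\bigl(\lambda(-1)e^{\gamma}\bigr)$ has a nonzero coefficient at $z^{(\lambda|\gamma)-1}$ whenever $(\lambda|\lambda)\neq 0$. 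Indeed, the definition of a field of charge $\alpha$ in this paper only fixes the exponents modulo $\mathbb{Z}$ and requires truncation from below at some vector-dependent point, not truncation exactly at $z^{-\Delta(\alpha,\beta)}$, so the charge grading alone cannot yield the vanishing. The conclusion is nevertheless correct, and the repair is precisely your ingredient (i), which is how the paper argues: after commuting $(e^{\beta^\circ_1})(-(\beta^\circ_1|\mu)-1)$ to the right past all the $e^{a_i}(\cdots)$ without any mode shift, one lands on the pure exponential, where the truncation is sharp; in the paper's normal form the resulting innermost factor is $(e^{\beta_1})(-1-(\beta_1|\mu))e^{\beta^\circ_1+\mu}=0$ because $-(-1-(\beta_1|\mu))-1=(\beta_1|\mu)$ lies below the leading exponent $(\beta_1|\beta^\circ_1+\mu)=(\beta_1|\mu)+1$.
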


\begin{proof}
Let $v$ be an element of the basis $\mathcal{C}^{(\mu)}$.
Then $v$ has the form
\[
v=\Psi(e^\mu,-1;e^{a_1},m_1;\cdots;e^{a_k},m_k)=\prod_{i=1}^k \left( e^{a_i}(m_i-(a_i|\mu+{\textstyle \sum_{j=1}^{i-1}}a_j))\right)e^{\mu}
\]
with a non-negative integer $k$, elements $a_1,\ldots,a_k\in B$
with $a_1\leq \cdots \leq a_k$ and 
negative integers $m_1,\ldots,m_k$ with $m_i\geq m_j$ if $i<j$ and $a_i=a_j$.
By commutation relation (\ref{eqn:commutation}) and
the equality $(e^{\beta_1^\circ})(-(\beta_1^\circ|\mu)-1)e^\mu=\varepsilon(\beta_1^\circ,\mu)e^{\beta_1^\circ+\mu}$, we have
\begin{eqnarray*}
&&(e^{\beta_1^\circ})(-(\beta_1^\circ|\mu)-1) v\\
&&\quad =c\cdot \prod_{i=1}^k \left( e^{a_i}(m_i-(a_i|\mu+{\textstyle \sum_{j=1}^{i-1}}a_j)) \right) e^{\beta_1^\circ+\mu}\\
&&\quad =c\cdot \Psi\left(e^{\beta_1^\circ+\mu},-1;e^{a_1},m_1+(a_1|\beta_1^\circ);\cdots;e^{a_k},m_k+(a_k|\beta_1^\circ)\right)
\end{eqnarray*}
with a non-zero scalar $c\in \mathbb{C}^\times$.
When $a_1=\beta_1$ and $m_1=-1$, we have $(e^{\beta^\circ_1})(-(\beta^\circ_1|\mu)-1)v=0$, since 
$(e^{\beta_1})(-1-(\beta_1|\mu))e^{\beta^\circ_1+\mu}=0$.
Hence the map (\ref{eqn:zero}) is zero.
When $a_1>\beta_1$ or $m_1<-1$, we see that $c^{-1}(e^{\beta^\circ_1})(-(\beta^\circ_1|\mu)-1)v$ belongs to
$\mathcal{C}^{(\mu+\beta^\circ_1)}$.
Moreover, if $w\in \mathcal{C}^{(\mu)}$ satisfies $w\not =v$, then any non-zero multiple of $(e^{\beta^\circ_1})(-(\beta^\circ_1|\mu)-1)w$
is not equal to $c^{-1} (e^{\beta^\circ_1})(-(\beta^\circ_1|\mu)-1)v$ in $\mathcal{C}^{(\mu+\beta^\circ_1)}$.
By linearly independence of the set $\mathcal{C}^{(\mu+\beta^\circ_1)}$, the map (\ref{eqn:inj}) is 
injective.
\end{proof}

\begin{proof}[Proof of Theorem \ref{sec:thmdual1}]
Put $X=(V_L)^{(W_{L^\circ})_+}$.
By the commutation relation (\ref{eqn:commutation}), we obtain $W_L \subset X$, since $e^{\beta_i}(m) |0\rangle=0$ for $m\geq 0$ and $i=1,\ldots,l$.
We prove $W_L \supset X$.
Let $v$ be an element of $X=(V_L)^{(W_{L^\circ})_+}$.
Since $F_L$ is a filtration, there exists $\mu \in L$ such that
$v \in W_L(\mu)$.
Since $\mu \in L$, $\mu$ has the form $\mu=i_1\beta_1+\cdots+i_l\beta_l$ with $i_1,\ldots,i_l \in \mathbb{Z}$.
We show
\begin{equation}\label{eqn:reduction}
v\in W_L(i_2\beta_2+\cdots+i_l\beta_l).
\end{equation}
When $i_1 \geq 0$, we have $W_L(\mu) \subset W_L(i_2\beta_2+\cdots+i_l\beta_l)$.
Therefore, in this case, (\ref{eqn:reduction}) holds.
Assume that $i_1\leq -1$.
Then, since $-(\beta^\circ_1|\mu)-1=-i_1-1$ is greater than or equal to $0$ and $v$ belongs to $X$, we have $(e^{\beta^\circ_1})(-(\beta^\circ_1|\mu)-1)v=0$.
By Lemma \ref{sec:lemmap}, $v\in W_L(\mu+\beta_1)$.
Repeating the procedure, we have $v\in W_L(i_2\beta_2+\cdots+i_l\beta_l)$.
Hence we have (\ref{eqn:reduction}).
Since the ordering $(\beta_1,\ldots,\beta_l)$ of $B$ is arbitrary, we also have $v\in W_L(i_3\beta_3+\cdots+i_l\beta_l)$, and eventually we obtain
$v\in W_L(0)=W_L$, which completes the proof.
\end{proof}

Now, we give an application of the theorem.

Let $\mathfrak{g}$ be a rank $l$ simply laced simple Lie algebra.
Let $\mathfrak{h}$ be a Cartan subalgebra of $\mathfrak{g}$ with the root lattice $Q$ and weight lattice $P$.
Let $\alpha_1,\ldots,\alpha_l$ be simple roots of $\mathfrak{g}$
with the Borel nilradical $\mathfrak{n}_{\pm}$
and the fundamental weights $\lambda_1,\ldots,\lambda_l$.

Consider the affine Kac-Moody Lie algebra $\mathfrak{g}^{(1)}=\mathfrak{g}[t,t^{-1}]\oplus \mathbb{C}K \oplus \mathbb{C}D$.
Set the fundamental weights $\Lambda_0,\Lambda_1,\ldots,\Lambda_l$ of $\mathfrak{g}^{(1)}$.
Let $L(\Lambda_n)$ be the level one fundamental representations.
Then, $L(\Lambda_0)$ has naturally a vertex algebra structure and $L(\Lambda_n)$ a structure of module over $L(\Lambda_0)$
for $n=0,\ldots,l$ which are compatible with the action of $\mathfrak{g}^{(1)}$.

Let $V_Q$ and $V_P$ be the (generalized) vertex algebra associated to the lattices $Q$ and $P$.
Then, $V_Q$ is a vertex subalgebra of $V_P$, and $V_P$ has the structure of $V_Q$-module.
Note that the irreducible modules $V_{Q+\lambda_n}$ over $V_Q$ ($n=1,\ldots,l$) are submodules of $V_P$.

It is well-known that the vertex algebra $L(\Lambda_0)$ is isomorphic to $V_Q$ and the module $L(\Lambda_n)$ is isomorphic to
$V_{Q+\lambda_n}$ ($n=1,\ldots,l$).

Consider the generalized principal subspace $W_P$ of $V_P$.
Let $\lambda$ be an element of $P$.
We denote
\[
(W_P)^{\overline{\lambda}}=\bigoplus_{\beta\in Q} (W_P)^{\lambda+\beta}.
\]

\begin{cor}
Under the isomorphism, the invariant space $L(\Lambda_0)^{(\mathfrak{n}_+[t])}$ agrees with $(W_P)^{\overline{0}}$.
Moreover, the invariant space $L(\Lambda_n)^{(\mathfrak{n}_+[t])}$ agrees with $(W_P)^{\overline{\lambda_n}}$ for $n=1,\ldots,l$.
\end{cor}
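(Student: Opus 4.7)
The plan is to deduce both assertions from Theorem \ref{sec:thmdual1} applied with $L = P$. In the simply-laced case one has $P^\circ = Q$ with dual $\mathbb{Z}$-basis $B^\circ = \{\alpha_1,\ldots,\alpha_l\}$, so that $W_{P^\circ} = W_Q = \langle e^{\alpha_1},\ldots,e^{\alpha_l}\rangle$. Under the Frenkel--Kac isomorphism $L(\Lambda_n) \cong V_{Q+\lambda_n} \subset V_P$ (with the convention $\lambda_0 = 0$), the affine Lie algebra element $e_{\alpha_i}\otimes t^n \in \mathfrak{n}_+[t]$ acts as the mode $e^{\alpha_i}(n)$; since $\mathfrak{n}_+[t]$ is generated as a Lie algebra by $\{e_{\alpha_i}\otimes t^n : 1\le i \le l,\ n \geq 0\}$, we have
\begin{equation*}
L(\Lambda_n)^{(\mathfrak{n}_+[t])} = \{v \in V_{Q+\lambda_n} : e^{\alpha_i}(n) v = 0,\ 1 \le i \le l,\ n \geq 0\}.
\end{equation*}

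The first substantive step is to upgrade this generator-wise annihilation to $(W_Q)_+$-invariance, that is, to show that annihilation of $v$ by every non-negative mode of each $e^{\alpha_i}$ implies annihilation by every non-negative mode of every element of $W_Q$. This follows from the Borcherds identity $\mathrm{B}_{0,n,k}(a,b)$ applied to $v$: if $a(j)v = b(j)v = 0$ for all $j \geq 0$, then for every $k \geq 0$ and every $n \in \mathbb{Z}$,
\begin{equation*}
(a(n)b)(k) v = \sum_{j \geq 0} (-1)^j \binom{n}{j} \bigl( a(n-j) b(k+j) v - \eta(\alpha,\beta) e^{\pi i n} b(n+k-j) a(j) v \bigr) = 0,
\end{equation*}
because in every term the mode acting directly on $v$ has non-negative index. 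Iterating this along the generation of $W_Q$ from $\{e^{\alpha_i}\}$ yields $L(\Lambda_n)^{(\mathfrak{n}_+[t])} = V_{Q+\lambda_n}^{(W_Q)_+}$.

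Finally, applying Theorem \ref{sec:thmdual1} with $L = P$ gives $(V_P)^{(W_Q)_+} = W_P$. Since $W_Q$ is $Q$-graded, its modes shift $P$-charges only by elements of $Q$, hence preserve each coset $V_{Q+\mu}$. Restricting the invariance statement to the coset $\lambda_n + Q$ yields
\begin{equation*}
V_{Q+\lambda_n}^{(W_Q)_+} = V_{Q+\lambda_n} \cap (V_P)^{(W_Q)_+} = V_{Q+\lambda_n} \cap W_P = (W_P)^{\overline{\lambda_n}},
\end{equation*}
which combined with the previous step establishes $L(\Lambda_n)^{(\mathfrak{n}_+[t])} = (W_P)^{\overline{\lambda_n}}$ for $n = 0, 1, \ldots, l$.

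The main obstacle I expect is the Borcherds-identity step: although the cancellation is standard in ordinary vertex algebra theory, in the generalized setting one must track the locality factor $\eta$ and the cocycle $\varepsilon$ through the identity, and verify that the induction on the depth of products within $W_Q$ exhausts the subalgebra --- which is guaranteed by the spanning set of Corollary \ref{sec:span} together with the fact that $W_Q$ is generated by $\{e^{\alpha_i}\}$.
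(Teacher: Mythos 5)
Your proof is correct and follows essentially the same route as the paper: apply Theorem \ref{sec:thmdual1} with $L=P$ and $L^\circ=Q$ (so $(V_P)^{(W_Q)_+}=W_P$) and then intersect with the coset $V_{Q+\lambda_n}$, using $W_P\cap V_{Q+\lambda_n}=(W_P)^{\overline{\lambda_n}}$. The Borcherds-identity step upgrading annihilation by the generator modes $e^{\alpha_i}(m)$, $m\geq 0$, to full $(W_Q)_+$-invariance is a genuinely needed detail that the paper's two-sentence proof leaves implicit, but it does not constitute a different approach.
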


\begin{cor}\label{sec:corcommutant}
The commutant $\mathrm{Com}(W_Q,V_Q)$ of the Feigin-Stoyanovsky principal subspace $W_Q$ inside $V_Q$ agrees with $(W_P)^{\overline{0}}$.
\end{cor}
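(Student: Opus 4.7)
The plan is to combine the preceding corollary (identifying $L(\Lambda_0)^{\mathfrak{n}_+[t]}$ with $(W_P)^{\overline{0}}$ under $L(\Lambda_0)\cong V_Q$) with a reduction of the commutant $\mathrm{Com}(W_Q,V_Q)$ to the same invariant subspace. So the whole argument is a translation between three descriptions of the same subspace.

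First I would rewrite the commutant condition using skew-symmetry. By the standard skew-symmetry formula in $V_Q$ (an ordinary vertex algebra, since $Q$ is integral),
\[
s(m)v=\sum_{k\geq 0}\frac{(-1)^{k+m+1}}{k!}T^k v(k+m)s.
\]
Applied to $v\in \mathrm{Com}(W_Q,V_Q)$ and $s\in W_Q$, this yields $s(m)v=0$ for all $m\geq 0$. The converse goes the same way, using that $W_Q$ is $T$-invariant. Hence $\mathrm{Com}(W_Q,V_Q)=\{v\in V_Q : s(n)v=0 \text{ for all } s\in W_Q,\,n\geq 0\}$.

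Next I would reduce to the generators $e^{\alpha_1},\ldots,e^{\alpha_l}$ of $W_Q$. If $a(n)v=0$ and $b(n)v=0$ for all $n\geq 0$, then the Borcherds identity
\[
(a(m)b)(n)=\sum_{j\geq 0}(-1)^j\binom{m}{j}\bigl(a(m-j)b(n+j)-(-1)^m b(m+n-j)a(j)\bigr)
\]
gives $(a(m)b)(n)v=0$ for every $m\in\mathbb{Z}$ and every $n\geq 0$, so the class of elements with this annihilation property is closed under all products $a(m)b$. Since $W_Q$ is generated (as a vertex algebra) by the $e^{\alpha_i}$, this closure property gives
\[
\mathrm{Com}(W_Q,V_Q)=\{v\in V_Q : e^{\alpha_i}(n)v=0 \text{ for } i=1,\ldots,l,\ n\geq 0\}.
\]

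Then I would transport this description through the isomorphism $V_Q\cong L(\Lambda_0)$, under which $e^{\alpha_i}(n)$ corresponds to $e_{\alpha_i}(n)$. Using the affine bracket $[e_{\alpha_i}(0),e_{\alpha_j}(n)]=[e_{\alpha_i},e_{\alpha_j}](n)$ (the cocycle term vanishes because $\langle e_{\alpha_i},e_{\alpha_j}\rangle=0$ for two positive roots), an induction on the height of a positive root $\beta$ shows that annihilation by all $e_{\alpha_i}(n)$ with $n\geq 0$ forces annihilation by $e_\beta(n)$ for $n\geq 0$, hence by every $x(n)$ with $x\in\mathfrak{n}_+$ and $n\geq 0$. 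Therefore $\mathrm{Com}(W_Q,V_Q)=L(\Lambda_0)^{\mathfrak{n}_+[t]}$, and the preceding corollary identifies this with $(W_P)^{\overline{0}}$, completing the proof. The only delicate point is this last translation between the vertex-algebra generation of $W_Q$ and the Lie-algebraic generation of $\mathfrak{n}_+$, but it amounts to the height induction above and presents no real obstacle.
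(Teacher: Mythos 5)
Your proof is correct, and the individual steps all hold up: skew-symmetry legitimately converts $v(n)s=0$ into $s(n)v=0$ (note $V_Q$ is an ordinary even-lattice vertex algebra here, so the usual form of skew-symmetry applies), the iterate/Borcherds identity reduces the annihilation condition to the generators $e^{\alpha_1},\ldots,e^{\alpha_l}$ of $W_Q$, and the height induction works because the cocycle term in $[e_{\alpha_i}(m),e_\beta(n)]$ vanishes for two positive roots. The paper, however, takes a more direct route: it regards $\mathrm{Com}(W_Q,V_Q)=(V_Q)^{(W_Q)_+}$ as already observed after the definition of the invariant subspace, applies Theorem \ref{sec:thmdual1} with $L=P$ and $L^\circ=Q$ to get $(V_P)^{(W_Q)_+}=W_P$, and intersects with $V_Q$ using $W_P\cap V_Q=(W_P)^{\overline{0}}$, obtaining both corollaries at once from the same one-line argument. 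You instead take the first corollary as an input and deduce the commutant statement from it by identifying $\mathrm{Com}(W_Q,V_Q)$ with $L(\Lambda_0)^{\mathfrak{n}_+[t]}$. This is not circular, since that corollary is established beforehand, and your version has the merit of spelling out the vertex-algebra-to-Lie-algebra dictionary (generator reduction plus the height induction) that the paper's terse joint proof leaves implicit; on the other hand it is only as strong as the first corollary, whose proof already needs essentially your steps anyway, so the paper's direct appeal to the duality theorem is the shorter path.
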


\begin{proof}[Proof of the corollaries]
We see that $W_P\cap V_{Q+\lambda}= (W_P)^{\overline{\lambda}}$ for $\lambda\in P$.
Since $P$ is the dual lattice of $Q$, by Theorem \ref{sec:thmdual1}, we have the corollaries.
\end{proof}

\subsection*{Acknowledgments}
The author wishes to express his thanks to his advisor, Professor Atsushi Matsuo for helpful advice and kind encouragement.
He also wishes to express his thanks to Toshiyuki Abe, Hiroshi Yamauchi and Masanari Okumura for helpful discussions.
This work was supported by JSPS KAKENHI Grant Number 14J09236.

\section*{References}


\begin{thebibliography}{99}


\bibitem{AKS}
E.\,Ardonne, R.\,Kedem, M.\,Stone, Fermionic characters of arbitrary highest-weight integrable $sl_{r+1}$-modules, Comm.\,Math.\,Phys.\ {\bf 264} (2006)
 427--464. 
\bibitem{B}
 R.\,E.\,Borcherds, Vertex algebras, Kac-Moody algebras, and the Monster, Proc.\,Nat.\,Acad.\,Sci.\ {\bf 83.10} (1986): 3068--3071.

\bibitem{BK} B.\,Bakalov, V.\,G.\,Kac, Generalized vertex algebras, Proceedings of the 6-th International Workshop ``Lie Theory and Its Applications in Physics", Varna, Bulgaria (2006) 3--25.

\bibitem{CalLM3}
C.\,Calinescu, J.\,Lepowsky, A.\,Milas, Vertex-algebraic structure of the principal subspaces of level one modules for the untwisted affine Lie algebras of type A, D, E.,
J.\,Alg.\ {\bf 323}, no.\ 1, (2010) 167--192. 

\bibitem{CLM2}
S.\,Capparelli, J.\,Lepowsky, A.\,Milas, The Rogers-Selberg recursions, the Gordon-Andrews identities and intertwining operators, The Ramanujan Journal
{\bf 12} (2006) 379--397.

\bibitem{CoLM}
W.\,J.\,Cook, H.\,Li, K.\,C.\,Misra, A recurrence relation for characters of highest weight integrable modules for affine Lie algebras, Comm.\,Contem.\,Math.\ 
{\bf 9},  no.\ 2, (2007) 121--133.

\bibitem{D} C.\,Y.\,Dong, Vertex algebras associated with even lattices, J.\,Alg.\ {\bf 161.1}  (1993) 245--265.

\bibitem{DL} C.\,Y.\,Dong,  J.\,Lepowsky, Generalized vertex algebras and relative vertex operators. Springer, (1993).

\bibitem{FFJMM}
B.\,Feigin, E.\,Feigin, M.\,Jimbo,  T.\,Miwa, E.\,Mukhin, Principal $\widehat{\mathfrak{sl}_3}$ subspaces and quantum Toda Hamiltonian, Algebraic analysis and around, Adv.\,Stud.\,Pure Math, {\bf 54} Math.\,Soc.\,Japan, Tokyo (2009) 109--166.

\bibitem{G}
G.\,N.\,Georgiev, Combinatorial constructions of modules for infinite-dimensional Lie algebras, I. Principal subspace, J.\,Pure Appl.\,Algebra {\bf 112},  (1996) 247--286.

\bibitem{Kaw} K.\,Kawasetsu, The intermediate vertex subalgebras of the lattice vertex operator algebras, Lett.\,Math.\,Phys.\ {\bf 104.2} (2014) 157--178.

\bibitem{KKMM} R.\,Kedem, T.\,R.\,Klassen, B.\,M.\,McCoy,  E.\,Melzer, Fermionic sum representations for conformal field theory characters. Phys.\,Lett.\,B,\ {\bf 307(1)},  (1993)  68--76.

\bibitem{LiL} B.\,H.\,Lian, A.\,R.\,Linshaw.: Howe pairs in the theory of vertex algebras, J.\,Alg.\ {\bf 317.1} (2007) 111--152.

\bibitem{LM} J.\,M.\,Landsberg, L.\,Manivel, The sextonions and $E_{7\frac{1}{2}}$, Adv.\,Math.\ {\bf 201.1} (2006) 143--179.

\bibitem{LW} J.\,Lepowsky, R.\,L.\,Wilson, A new family of algebras underlying the Rogers-Ramanujan identities and generalizations, Proc.\,Nat.\,Acad.\,Sci.\ {\bf 78.12} (1981) 7254--7258.

\bibitem{LX} H.\,H.\,Li, X.\,P.\,Xu, A characterization of vertex algebras associated to even lattices, J.\,Alg.\ {\bf 173.2} (1995) 253--270.

\bibitem{MNT} A.\,Matsuo, K.\,Nagatomo,  A.\,Tsuchiya, Quasi-finite Algebras Graded by Hamiltonian and Vertex Operator Algebras, Moonshine-The First Quarter Century and Beyond: Proceedings of a Workshop on the Moonshine Conjectures and Vertex Algebras. {\bf 372} Cambridge University Press, (2010) 282--329.

\bibitem{MP} A.\,Milas, M.\,Penn, Lattice vertex algebras and combinatorial bases: general case and $\mathcal{W}$-algebras, New York J.\,Math {\bf 18} (2012) 621--650.

\bibitem{P1}
M.\,Primc, Vertex operator construction of standard modules for $A_n^{(1)}$, Pacific J.\,Math.\ {\bf 162} (1994) 143--187.

\bibitem{R} M.\,Roitman, Combinatorics of free vertex algebras, J.\,Alg.\ {\bf 255.2} (2002) 297--323.

\bibitem{SF}
A.\,V.\,Stoyanovskii, B.\,L.\,Feigin, Functional models for representations of current algebras and semi-infinite Schubert cells.
 Funct.\,Anal.\,Appl.\ {\bf 28}, no.\ 1, (1994) 55--72.

\bibitem{Z}
Y.\,Zhu, Modular invariance of characters of vertex operator algebras. J.\ AMS\ {\bf 9}, no.1 (1996) 237--302. 

\end{thebibliography}
\end{document}